\documentclass[11pt]{article}
\usepackage{amsfonts, amstext, latexsym}
\usepackage{fullpage}
\usepackage[dvips]{graphicx}
\usepackage[dvips]{color}
\usepackage{amsmath, amsthm}
\usepackage{psfrag}


\usepackage{color}

\typeout{Environments, }

\newtheorem{proposition}{Proposition}
\newtheorem{definition}{Definition}
\newtheorem{theorem}{Theorem}
\newtheorem{corollary}{Corollary}
\newtheorem{lemma}{Lemma}

\newtheorem{example}{Example}

\graphicspath {
    {./}
    {./eps/}
}

\DeclareMathOperator{\conv}{conv}

\newcommand{\eps}{\varepsilon}
\newcommand{\R}{\mathbb{R}}
\begin{document}
\title {Convergence Analysis of Meshfree Approximation Schemes}
\author{A.~Bompadre${}^{1}$ \and B.~Schmidt${}^{2}$ \and M.~Ortiz${}^{3}$}

\addtocounter{footnote}{1}\footnotetext{Graduate Aerospace
Laboratories, California Institute of Technology, Pasadena, CA
91125, United States, {\tt abompadr@caltech.edu}}

\addtocounter{footnote}{1}\footnotetext{Zentrum Mathematik,
Technische Universit{\"a}t M{\"u}nchen, Boltzmannstr.\ 3, 85748
Garching, Germany, {\tt schmidt@ma.tum.de}}

\addtocounter{footnote}{1}\footnotetext{Correspondence to: M. Ortiz,
Graduate Aerospace Laboratories, California Institute of Technology,
Pasadena, CA 91125, United States, {\tt ortiz@aero.caltech.edu}}

\maketitle

\begin{abstract}
This work is concerned with the formulation of a general framework
for the analysis of meshfree approximation schemes and with the
convergence analysis of the Local Maximum-Entropy (LME) scheme as a
particular example. We provide conditions for the convergence in
Sobolev spaces of schemes that are {\sl $n$-consistent}, in the
sense of exactly reproducing polynomials of degree less or equal to
$n\geq 1$, and whose basis functions are of {\sl rapid decay}. The
convergence of the LME in $W^{1, p}_{\rm loc}(\Omega)$ follows as a
direct application of the general theory. The analysis shows that
the convergence order is linear in $h$, a measure of the {\it
density} of the point set. The analysis also shows how to
parameterize the LME scheme for optimal convergence. Because of the
convex approximation property of LME, its behavior near the boundary
is singular and requires additional analysis. For the particular
case of polyhedral domains we show that, away from a small singular
part of the boundary, any Sobolev function can be approximated by
means of the LME scheme. With the aid of a capacity argument, we
further obtain approximation results with truncated LME basis
functions in $H^1(\Omega)$ and for spatial dimension $d > 2$.

\end{abstract}

\begin{section}{Introduction}

{\sl Meshfree approximation schemes} (cf., e.~g.,
\cite{meshfree_review} for a review) are advantageous in a number of
areas of application, e.~g., those involving Lagrangian descriptions
of unconstrained flows (cf., e.~g., \cite{LiHabbalOrtiz2010} for a
representative example) where methods based on triangulation, such
as the finite-element method, inevitably suffer from problems of
mesh-entanglement. The present work is concerned with the
formulation of a general framework for the analysis of meshfree
approximation schemes (cf., e.~g., \cite{meshfree_convergence} for
representative past work) and with its application to the Local
Maximum-Entropy (LME) scheme as an example. By way of conceptual
backdrop, we may specifically envision time-independent problems for
which the solutions of interest follow as the minimizers of a
functional $F\colon X \to \bar{\mathbb{R}}$, where $X$ is a
topological vector space. General conditions for the existence of
solutions are provided by the Tonelli's theorem (e.~g.,
\cite{dalMaso1993}). In this framework, an approximation scheme is a
sequence $X_k$ of subspaces of $X$, typically of finite dimension,
defining a corresponding sequence of {\sl Galerkin reductions} of
$F$,
\begin{equation}
    F_k(u)
    =
    \left\{
    \begin{array}{ll}
        F(u), & \text{if } u\in X_k, \\
        +\infty, & \text{otherwise} .
    \end{array}
    \right.
\end{equation}
An approximation scheme is then said to be {\sl convergent} if it
has the following density property: For every $u\in X$, there exists
a sequence $u_k\in X_k$ such that $\lim_{k\to\infty} u_k = u$. The
connection between density of the approximation scheme and
convergence is provided by the following proposition
\cite{ContiHauretOrtiz2007}.

\begin{proposition}\label{prop:abstract}
Let $X$ be endowed with two metrizable topologies $S$ and $T$, with
$T$ finer than $S$. Let $F\colon X \to \bar{\mathbb{R}}$ be coercive
in $(X,S)$ and continuous in $(X,T)$. Let $X_k$ be a dense sequence
of sets in $(X,T)$ and let $F_k$ be the corresponding sequence of
Galerkin reductions of $F$. Then the sequence $F_k$
$\Gamma$-converges to the lower semicontinuous envelope of $F$ and
is equicoercive in $(X,S)$.
\end{proposition}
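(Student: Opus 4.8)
The plan is to prove the $\Gamma$-convergence with respect to the coarser topology $S$ — the same topology in which equicoercivity is claimed — since it is exactly this pairing that makes the fundamental theorem of $\Gamma$-convergence applicable. Write $\bar F$ for the lower semicontinuous envelope of $F$ in $(X,S)$. Everything rests on one elementary observation: $F_k \ge F$ pointwise, because $F_k$ agrees with $F$ on $X_k$ and equals $+\infty \ge F$ off $X_k$. Metrizability of both topologies is what lets me argue everywhere with sequences, for continuity, lower semicontinuity, compactness and $\Gamma$-convergence alike.

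Equicoercivity is then immediate: for each $t \in \mathbb{R}$ one has $\{F_k \le t\} \subseteq \{F \le t\}$, and coercivity of $F$ in $(X,S)$ makes the right-hand side relatively compact, so its $S$-closure is a single compact set containing every sublevel set $\{F_k \le t\}$. The $\Gamma$-liminf inequality follows from the same observation. Given any $u$ and any $u_k \to u$ in $S$, I use $F_k \ge F$ and then the sequential description of the envelope valid in the metrizable topology $S$, namely $\bar F(u) \le \liminf_k F(v_k)$ for every sequence $v_k \to u$ in $S$, to get $\liminf_k F_k(u_k) \ge \liminf_k F(u_k) \ge \bar F(u)$; hence $\Gamma\text{-}\liminf_S F_k \ge \bar F$.

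The recovery (limsup) inequality is the step where the two topologies genuinely interact, and it is the one I regard as the crux. Fixing $u$, the density of $X_k$ in the \emph{finer} topology $T$ produces $u_k \in X_k$ with $u_k \to u$ in $T$; since $T$ is finer than $S$ the same sequence converges in $S$ and so is an admissible recovery sequence there, while continuity of $F$ in $T$ forces $F_k(u_k) = F(u_k) \to F(u)$ (the case $F(u)=+\infty$ being trivial). Thus $\Gamma\text{-}\limsup_S F_k \le F$, and because the $\Gamma$-upper limit is automatically $S$-lower semicontinuous whereas $\bar F$ is the largest $S$-lower semicontinuous minorant of $F$, this sharpens to $\Gamma\text{-}\limsup_S F_k \le \bar F$. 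Combining the two inequalities gives $\bar F \le \Gamma\text{-}\liminf_S F_k \le \Gamma\text{-}\limsup_S F_k \le \bar F$, so the $\Gamma$-limit exists and equals $\bar F$. The one delicate point, and the reason the two topologies are needed at all, is that the recovery sequence must be drawn from the sets $X_k$, where only $T$-density is on offer, yet must converge in the coarser $S$; the hypotheses that $T$ is finer than $S$ and that $F$ is $T$-continuous are precisely what reconcile these two demands.
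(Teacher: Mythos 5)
Your argument is correct and is the standard one. Note that the paper itself gives no proof of this proposition, citing it from the reference [ContiHauretOrtiz2007]; your reading that the $\Gamma$-convergence, the relaxation $\bar F$, and the equicoercivity are all to be taken in the coarser topology $S$ is the intended one, and each of your three steps --- equicoercivity and the liminf inequality from the pointwise bound $F_k \ge F$, and the limsup inequality from a $T$-convergent (hence $S$-convergent) recovery sequence in $X_k$ combined with $T$-continuity of $F$ and the $S$-lower semicontinuity of the $\Gamma$-upper limit --- is exactly how the cited result is established.
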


We recall that $\Gamma$-convergence is a powerful notion of
variational convergence of functionals that, in particular, implies
convergence of minimizers. Thus, if the sequence $F_k$ is
equicoercive, then the minimizers of $F$ are accumulation points of
minimizers of $F_k$, i.~e., if $F_k(u_k)=\inf F_k$ then the sequence
$u_k$ has a subsequence that converges to a minimizer of $F$. We
also recall that the topology $T$ is finer than $S$, i.~e., any
converging sequence for $T$ converges for $S$. In applications, $T$
is typically a metric or normed topology and $S$ the corresponding
weak topology.

It thus follows that, within the general framework envisioned here,
the analysis of convergence of approximation schemes reduces to
ascertaining the density property. Towards this end, in
Section~\ref{sec:: general convergence analysis} we begin by
analyzing meshfree approximation schemes that are {\sl
$n$-consistent}, in the sense of exactly reproducing polynomials of
degree less or equal to $n\geq 1$, and whose basis functions are of
{\sl rapid decay}. Specifically, for schemes subordinate to point
sets possessing a certain geometrical regularity property that we
term {\sl $h$-density}, we prove a uniform error bound for
consistent and rapidly-decaying approximation schemes. In addition,
we show that the sets of functions spanned by consistent and
rapidly-decaying approximation schemes are dense in Sobolev spaces.

In Sections \ref{sec:: convergence analysis} and \ref{sec::
boundary-density}, we apply the general results of
Section~\ref{sec:: general convergence analysis} to the Local
Maximum-Entropy (LME) approximation scheme of Arroyo and
Ortiz~\cite{arroyo_ortiz_06} (see also \cite{arroyo_ortiz_06_b,
sukumar_07, cyron_09, gonzalez_10}). The LME scheme has been
extensively assessed numerically over a broad range of test problems
\cite{arroyo_ortiz_06, quak_09, LiHabbalOrtiz2010}, but a rigorous
convergence analysis has been heretofore unavailable. The general
theory of Section~\ref{sec:: general convergence analysis} readily
establishes the density of the LME approximation spaces $X_k$ in
$W^{1, p}_{\rm loc}(\Omega)$, cf.~Section \ref{sec:: convergence
analysis}. In particular, the analysis shows that the convergence
order is linear in $h$, a measure of the {\it density} of the point
set. These convergence rates and the corresponding error bounds are
in agreement with the numerical results reported
in~\cite{arroyo_ortiz_06}, and are comparable to those of the
first-order finite element method (cf., e.~g., \cite{ciarlet}).
Conveniently, the analysis also shows how to choose the LME {\sl
temperature parameter} so as to obtain optimal convergence. This
optimal choice is in agreement with that determined in
\cite{arroyo_ortiz_06, LiHabbalOrtiz2010} by means of numerical
testing.

The LME scheme is a {\sl convex approximation scheme} in which the
basis functions are constrained to take non-negative values. By
virtue of this restriction, the LME scheme is defined for convex
domains only. Consequently, its behavior near the boundary is
somewhat singular and requires careful additional analysis. In
Section \ref{sec:: boundary-density}, for the particular case of
polyhedral domains we show that, away from a small singular part of
the boundary, any Sobolev function can indeed be approximated by
means of the LME scheme. Then, with the aid of a capacity argument
we obtain approximation results with truncated LME basis functions
in $H^1(\Omega)$ and for spatial dimension $d > 2$.

\end{section}

\begin{section}{Prolegomena}
\label{sec:: background}

The open $d$-ball $B({x}, \delta)$ of radius $\delta$ centered at $x$ is the set $\{{y} \in \mathbb{R}^d\colon |{y}- {x}|< \delta\}$. The closed $d$-ball $\bar{B}({x}, \delta)$ of radius $\delta$ centered at $x$ is the set $\{{y} \in \mathbb{R}^d\colon |{y}- {x}|\leq \delta\}$. Given a set $A \subset \mathbb{R}^d$, we denote by $\bar{A}$ its closure, and by $\partial A$ its boundary. By a {\sl domain} we shall specifically understand an open and bounded subset of $\mathbb{R}^d$. Given a {\sl point set} $P \in (\mathbb{R}^d)^N$, we denote by $\overline{\conv}(P)$ its closed convex hull~\cite{rockafellar_70_a}, and by $\conv(P)$ the interior of $\overline{\conv}(P)$. We recall that a {\sl $d$-simplex} $T \subset \mathbb{R}^d$ is the convex hull of $d+1$ affinely independent points~\cite{rockafellar_70_a}. Given a bounded set $A \subset \mathbb{R}^d$, its {\sl size} $h_A$ is the diameter of the smallest ball containing $A$.

The following definitions formalize the notion of a point set $P \subset \Omega$ that approximates a domain $\Omega$ uniformly.

\begin{definition}[$h$-covering]\label{def:: h-covering}
We say that a point set $P \subset \mathbb{R}^d$ is an {\it h}-\emph{covering} of a set $A \subset \mathbb{R}^d$, $h>0$, if for every ${x}\in A$ there exists a $d$-simplex $T_{{x}}$ of size $h_{T_{{x}}} < h$ and with vertices in $P$ such that ${x}\in T_{{x}}$.
\end{definition}
\begin{definition}[$h$-density]\label{def:: h-density}
We say that a point set $P \subset \mathbb{R}^d$ has {\it h}-\emph{density} bounded by $\tau > 0$ if for every $x \in \mathbb{R}^d$, $\# \big(P \cap {\bar B}({x}, h)\big) \leq \tau$.
\end{definition}

For a point set $P \subset \Omega$ with {\it h}-density bounded by $\tau$, the following proposition bounds its number of points in rings of $\mathbb{R}^d$.

\begin{proposition}\label{prop:: UB number node points on rings}
Assume $P \subset \Omega$ has {\it h}-density bounded by $\tau$, for some $h$, $\tau >0$. Then there is a constant $c > 0$ that depends on $\tau$ and $d$ such that,
\begin{equation}
    \# \left( P \cap \left({\bar B}({x}, th) \setminus B(x, (t-1)h) \right) \right)
    \leq
    c\, t^{d-1},
\end{equation}
$\forall {x} \in \Omega$ and integers $t \geq 1$.
\end{proposition}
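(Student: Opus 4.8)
The plan is to reduce the count to a volume-packing estimate, thereby decoupling the clustering control supplied by the $h$-density bound from the purely geometric fact that the ring is a thin shell. First I would dispose of the case $t=1$ directly: the ring $\bar{B}(x,h)\setminus B(x,0)$ is contained in $\bar{B}(x,h)$, so Definition~\ref{def:: h-density} gives at most $\tau=\tau\,1^{d-1}$ points. For $t\geq 2$, write $A_t:=\bar{B}(x,th)\setminus B(x,(t-1)h)$ and choose a maximal subset $Z=\{z_1,\dots,z_N\}\subset A_t$ whose points are pairwise at distance at least $h$. By maximality every point of $A_t$ lies within distance $h$ of some $z_j$, so the balls $\bar{B}(z_j,h)$ cover $A_t$; consequently $P\cap A_t\subseteq\bigcup_{j}\big(P\cap \bar{B}(z_j,h)\big)$, and the $h$-density bound yields $\#(P\cap A_t)\leq \tau N$.

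It then remains to show $N\leq C\,t^{d-1}$ for a constant $C=C(d)$. Here I would use that the half-radius balls $\bar{B}(z_j,h/2)$ have pairwise disjoint interiors, since the centers are at mutual distance at least $h$. Because each $z_j$ satisfies $(t-1)h\leq |z_j-x|\leq th$, every such ball is contained in the enlarged shell $S:=\{y : (t-\tfrac{3}{2})h \leq |y-x|\leq (t+\tfrac{1}{2})h\}$. Comparing volumes gives $N\,\omega_d (h/2)^d \leq \mathrm{vol}(S)=\omega_d\big[(t+\tfrac12)^d-(t-\tfrac32)^d\big]h^d$, where $\omega_d$ denotes the volume of the unit $d$-ball.

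The crux is the estimate $(t+\tfrac12)^d-(t-\tfrac32)^d=O(t^{d-1})$, which I expect to be the only step requiring any care. It follows from the mean value theorem applied to $s\mapsto s^d$ over an interval of length $2$, yielding the bound $2d\,(t+\tfrac12)^{d-1}\leq C'\,t^{d-1}$ for $t\geq 2$. Substituting back cancels the factor $h^d$ and produces $N\leq C\,t^{d-1}$, whence $\#(P\cap A_t)\leq \tau C\, t^{d-1}=:c\, t^{d-1}$ with $c$ depending only on $\tau$ and $d$.

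The genuine obstacle is conceptual rather than computational: one must exploit the \emph{thinness} of the ring (thickness $O(h)$ rather than the radius $O(th)$) through the disjoint packing of half-balls. A naive covering of the entire ball $\bar{B}(x,th)$ by balls of radius $h$ would use $O(t^{d})$ of them and hence only deliver the weaker bound $\#(P\cap A_t)=O(t^{d})$; the factor $t^{d-1}$ is recovered precisely because the packing lives in a shell whose volume scales like the surface area $(th)^{d-1}$ times the thickness $h$.
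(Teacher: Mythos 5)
Your proof is correct and takes essentially the same route as the paper's: cover the annulus by $O(t^{d-1})$ balls of radius $h$, apply the $h$-density bound to each, and count the covering balls via a volume comparison inside a slightly thickened shell, where the key estimate $(t+c)^d-(t-c')^d=O(t^{d-1})$ appears in both arguments. The only difference is that you produce the covering centers as a maximal $h$-separated net and count them by packing disjoint half-balls, whereas the paper takes the lattice $h\,d^{-1/2}\mathbb{Z}^d$ intersected with the enlarged shell; this is a cosmetic variation.
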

\begin{proof}
Let
\begin{equation}
\begin{split}
    E_1
    &= \left\{y \in \R^d \colon {\rm dist}\left(y, {\bar B}({x}, th) \setminus B(x, (t-1)h)\right) \le h \right\} \\
    &= {\bar B}({x}, (t+1)h) \setminus B(x, (t-2)h)
\end{split}
\end{equation}
and
\begin{equation}
\begin{split}
    E_2
    &= \left\{y \in \R^d \colon {\rm dist}\left(y, {\bar B}({x}, th) \setminus B(x, (t-1)h)\right) \le 2 h \right\} \\
    &= {\bar B}({x}, (t+2)h) \setminus B(x, (t-3)h).
\end{split}
\end{equation}
Then for every $y \in {\bar B}({x}, th) \setminus B(x, (t-1)h)$ there is a $z \in Z := E_1 \cap h d^{-\frac{1}{2}} \mathbb{Z}^d$ such that $|y - z| \le h$ and so
\begin{equation}
    {\bar B}({x}, th) \setminus B(x, (t-1)h)
    \subset
    \bigcup_{z \in Z} \bar{B}(z, h).
\end{equation}
On the other hand, $z + [0, h d^{-\frac{1}{2}})^d \subset E_2$ for all $z \in Z$, and thus
\begin{equation}
    \# Z h^d d^{-\frac{d}{2}}
    =
    \left| \bigcup_{z \in Z} z + [0, h d^{-\frac{1}{2}})^d \right|
    \le |E_2|
    \le |B(0,1)| h^d \left( (t+2)^d - (t-3)^d \right)
    \le c h^d t^{d-1}.
\end{equation}
Consequently,
\begin{align}
  \# \left( P \cap \left({\bar B}({x}, th) \setminus B(x, (t-1)h) \right) \right)
  \le \# \left( P \cap \bigcup_{z \in Z} \bar{B}(z, h) \right)
  \le c d^{\frac{d}{2}} \tau t^{d-1}.
\end{align}
\end{proof}
\end{section}

\begin{section}{Convergence Analysis of General Meshfree Approximation Schemes}
\label{sec:: general convergence analysis}

In this Section we analyze meshfree approximation schemes that are $n$-consistent and whose shape functions are of {\sl rapid decay}. Specifically, we prove a uniform error bound for consistent and rapidly-decaying approximation schemes. In addition, we show that the set of functions spanned by consistent and rapidly-decaying approximation schemes are dense in Sobolev spaces.

Let $\Omega \subset \mathbb{R}^d$ be a domain. By an approximation scheme $\{I,W,P\}$ we mean a collection $W = \{w_a,\ a \in I\}$ of shape functions and a point set $P$, both indexed by $I$. Given an approximation scheme $\{I,W,P\}$, we approximate functions $u\colon\Omega\to\mathbb{R}$ by functions in the span $X$ of $W$ of the form
\begin{equation}\label{def:: uh_}
    u_I({x}) = \sum_{a \in I} u(x_a) w_a({x}),
\end{equation}
provided that this operation is well defined. More generally, we shall consider sequences of approximation schemes $\{I_k,W_k,P_k\}$ and let
\begin{equation}\label{def:: uh}
    u_k({x}) = \sum_{a \in I_k} u(x_a) w_a({x}),
\end{equation}
be the corresponding sequence of approximations to $u$ in the sequence $X_k$ of finite-dimensional spaces of functions spanned by $W_k$. We note that, for simplicity, we assume that all functions are defined over a common domain $\Omega$. Depending on the approximation scheme, this assumption may implicitly restrict the type of domains that may be considered, e.~g., polyhedral domains. The aim then is to ascertain conditions on the approximation scheme under which $u_k \to u$ in an appropriate Sobolev space $W^{m,p}(\Omega)$.

We recall the following definition of consistency of approximation schemes~\cite{strang_73}.
\begin{definition}[Consistency]
We say that an approximation scheme $\{I,W,P\}$ is consistent of order $n \geq 0$, or $n$-consistent, relative to a point set $P$ if it exactly interpolates polynomials of degree less or equal to $n$ within $\Omega$, i.~e., if
\begin{equation}\label{eq:Cons}
    x^\alpha = \sum_{a\in I} x_a^\alpha w_a(x)
\end{equation}
for all multiindices $\alpha$ of degree $|\alpha| \leq n$.
\end{definition}
A simple binomial expansion shows that~\eqref{eq:Cons} can equivalently be replaced by
\begin{subequations}
\label{eq:Cons2}
\begin{align}
    \sum_{a\in I} w_a(x) &= 1,\\
    \sum_{a\in I} w_a(x)(x_a - x)^\alpha  &= 0,\ \forall \alpha \in \mathbb{N}^d,\, 0 < |\alpha|
    \leq n,
\end{align}
\end{subequations}
in the definition of consistency.

Consistency results in a number of identities involving the partial derivatives of the shape functions, which we record next for subsequent use (cf.~\cite{ciarlet_72}).

\begin{lemma}\label{lem:DCons}
Let $\{I,W,P\}$ be an approximation scheme. Suppose that $W$ consists of $C^r(\Omega)$ shape-functions that are $n$th-order consistent relative to $P$ in $\Omega$. Let $\alpha$, $\beta$ be multiindices, with $0 \leq |\alpha|\leq n$, $0 \leq |\beta| \leq r$. Then,
\begin{equation}\label{eq:DCons}
    \sum_{a\in I}
    D^\beta w_{a}(x) (x_{a} - x)^{\alpha}
    =
\begin{cases}
    \alpha!, & \text{if } \alpha = \beta, \\
    0, & \text{otherwise}.
\end{cases}
\end{equation}
\end{lemma}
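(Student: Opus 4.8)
The plan is to prove the identity by induction on the order $|\beta|$ of the derivative, using the consistency relations~\eqref{eq:Cons2} as the base case and repeatedly differentiating them. For fixed multiindices write
\begin{equation}
    S^{\alpha}_{\beta}(x) = \sum_{a\in I} D^\beta w_a(x)\,(x_a - x)^\alpha ,
\end{equation}
so that the claim is $S^\alpha_\beta(x) = \alpha!$ when $\alpha=\beta$ and $S^\alpha_\beta(x)=0$ otherwise, for all $|\alpha|\le n$ and $|\beta|\le r$. The base case $\beta = 0$ is precisely~\eqref{eq:Cons2}: the first relation gives $S^0_0 = 1 = 0!$, while the second gives $S^\alpha_0 = 0$ for $0 < |\alpha| \le n$, matching the Kronecker pattern.

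The inductive engine is a single differentiation identity. Let $e_j$ denote the $j$th unit multiindex, and suppose the claim holds for all $\beta$ with $|\beta| = m$ and all $|\alpha|\le n$; I would prove it for $|\beta| = m+1 \le r$. Since $D^\beta w_a \in C^1(\Omega)$ whenever $|\beta| \le r-1$, the function $S^\alpha_\beta$ is differentiable, and the product rule together with the elementary identity $\partial_j (x_a - x)^\alpha = -\alpha_j (x_a - x)^{\alpha - e_j}$ yields
\begin{equation}
    \partial_j S^\alpha_\beta(x)
    = \sum_{a\in I} D^{\beta+e_j} w_a(x)\,(x_a - x)^\alpha
      - \alpha_j \sum_{a\in I} D^\beta w_a(x)\,(x_a - x)^{\alpha - e_j}
    = S^\alpha_{\beta+e_j}(x) - \alpha_j\, S^{\alpha-e_j}_{\beta}(x),
\end{equation}
and hence the recursion $S^\alpha_{\beta+e_j}(x) = \partial_j S^\alpha_\beta(x) + \alpha_j\, S^{\alpha-e_j}_{\beta}(x)$.

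To close the induction, I would feed the induction hypothesis into this recursion. Since $|\alpha|\le n$, the hypothesis applies to $S^\alpha_\beta$, which is therefore \emph{constant} in $x$, so $\partial_j S^\alpha_\beta \equiv 0$; the whole right-hand side reduces to $\alpha_j\, S^{\alpha - e_j}_\beta(x)$, well defined and covered by the hypothesis because $|\alpha - e_j| = |\alpha| - 1 \le n$ whenever $\alpha_j > 0$. This term is nonzero only when $\alpha - e_j = \beta$, i.e.\ $\alpha = \beta + e_j$, in which case $S^{\alpha-e_j}_\beta = \beta! = (\alpha - e_j)!$ and we obtain $S^\alpha_{\beta+e_j} = \alpha_j\,(\alpha - e_j)!$; otherwise $S^\alpha_{\beta+e_j} = 0$, as required. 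The one point demanding care — and the only real ``obstacle'' — is the factorial bookkeeping at the diagonal: one must verify that $\alpha_j\,(\alpha - e_j)! = \alpha!$, which follows from $\alpha_j = (\beta + e_j)_j = \beta_j + 1$ and $(\beta_j+1)!\, = (\beta_j+1)\,\beta_j!$, giving $\alpha! = (\beta+e_j)!$ exactly. Since every $\beta'$ with $|\beta'| = m+1$ and $\beta'_j>0$ can be written as $\beta + e_j$ with $|\beta| = m$, this covers all derivatives of order $m+1$, completing the induction up to order $r$.
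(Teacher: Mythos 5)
Your proof is correct and follows essentially the same route as the paper's: induction on $|\beta|$, with the product rule giving the recursion $S^\alpha_{\beta+e_j} = \partial_j S^\alpha_\beta + \alpha_j S^{\alpha-e_j}_\beta$, the inductive hypothesis killing the derivative term, and the factorial identity $\alpha_j(\alpha-e_j)! = \alpha!$ closing the diagonal case. The paper's $\gamma$ with $|\gamma|=1$ and the factor $(\alpha\cdot\gamma)$ are exactly your $e_j$ and $\alpha_j$.
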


\begin{proof}
We proceed by induction on $|\beta|$. For $\beta = 0$, the identity \eqref{eq:DCons} follows directly from consistency. Let $0 \leq m < r$ and assume eq.~\eqref{eq:DCons} holds for all multiindices $\alpha$, $\beta$ such that $0\leq |\beta| \leq m$ and $0 \leq |\alpha|\leq n$. Let $\beta$ and $\gamma$ be such that $|\beta|=m$ and $|\gamma|=1$. Then, for $\alpha=0$ we have, by consistency,
\begin{equation}
    \sum_{a\in I} D^{\beta+\gamma} w_a(x)
    =
    D^{\beta+\gamma} \sum_{a\in I} w_a(x)
    =
    0,
\end{equation}
whereas for $0 < |\alpha|\leq n$ we have, also by consistency,
\begin{equation}\label{eq:Dcons2}
\begin{split}
    &
    \sum_{a\in I}
    D^{\beta+\gamma} w_a(x) (x_{a} - x)^{\alpha}
    \\ & =
    D^\gamma
    \sum_{a\in I}
    D^\beta w_a(x)(x_{a} - x)^{\alpha}
    +
    \sum_{a\in I}
    D^\beta w_a(x) (\alpha\cdot\gamma) (x_{a} - x)^{\alpha - \gamma}
    \\ & =
    (\alpha\cdot\gamma) \sum_{a\in I}
    D^\beta w_a(x) (x_{a} - x)^{\alpha - \gamma}
\end{split}
\end{equation}
Suppose that $\alpha \neq \beta+\gamma$. Then, from
\eqref{eq:DCons},
\begin{equation}
    \sum_{a\in I} D^\beta w_a(x) (x_{a} - x)^{\alpha - \gamma}
    =
    0.
\end{equation}
Suppose, contrariwise, that $\alpha = \beta+\gamma$. Then, also from
\eqref{eq:DCons},
\begin{equation}
    (\alpha\cdot\gamma) \sum_{a\in I}
    D^\beta w_a(x) (x_{a} - x)^{\alpha - \gamma}
    =
    (\alpha\cdot\gamma) (\alpha - \gamma)!
    =
    \alpha!,
\end{equation}
whereupon \eqref{eq:Dcons2} becomes
\begin{equation}
    \sum_{a\in I}
    D^{\beta+\gamma} w_a(x) (x_{a} - x)^{\alpha}
    =
    \alpha!,
\end{equation}
and \eqref{eq:DCons} holds for all multiindices $\beta$ of degree
$m+1$.
\end{proof}

We recall that the Taylor approximation of order $r$ of a function
$u\in C^{r+1}(\Omega)$ at $y\in\Omega$ is
\begin{equation}
    T_{r}(u)(x,y)
    =
    \sum_{|\alpha| \leq r}
    \frac{1}{\alpha!} D^\alpha u(x) (y - x)^{\alpha}
\end{equation}
and its remainder is
\begin{equation}
    R_{r+1}(x,y)
    =
    u(y) - T_{r}(u)(x,y),
\end{equation}
which turns out to be
\begin{equation}\label{eq:TaylorResidual}
    R_{r+1}(x,y)
    =
    \sum_{|\alpha| = r + 1}
    \frac{1}{\alpha!} D^\alpha u\big(x + \theta (y - x)\big) (y -
    x)^{\alpha},
\end{equation}
for some $\theta \in (0,1)$.

Functions in the span of a consistent shape-function basis satisfy the following {\sl multipoint Taylor formula} (cf.~\cite{ciarlet, ciarlet_72}).

\begin{proposition}[Multipoint Taylor formula]\label{prop:Taylor}
Let $W$ be a $C^r(\Omega)$ shape-function set $n$th-order consistent
relative to a point set $P$ in $\Omega$, $u\in
C^{\ell+1}(\conv(\Omega))$ and $m = \min\{n, \ell\}$. Then,
\begin{equation}\label{eq:Duh}
    D^\alpha u_I(x)
    =
    D^\alpha u(x)
    +
    \sum_{a\in I} R_{m+1}(x_a,x) D^\alpha w_a(x).
\end{equation}
for all $|\alpha|\leq \min\{m,r\}$ and $x\in\Omega$.
\end{proposition}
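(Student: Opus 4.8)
The plan is to differentiate the interpolant directly and only afterwards insert a Taylor expansion of the nodal values, so that Lemma~\ref{lem:DCons} collapses the polynomial part to $D^\alpha u(x)$ exactly. The starting observation is that the nodal values $u(x_a)$ appearing in the definition \eqref{def:: uh_} of $u_I$ are constants with respect to the evaluation point $x$. Hence differentiation passes straight onto the shape functions,
$$D^\alpha u_I(x) = \sum_{a\in I} u(x_a)\, D^\alpha w_a(x),$$
with no product-rule contributions. This is the crucial organizational point: differentiating first, and expanding the constants $u(x_a)$ only afterwards, avoids the spurious terms one would generate by expanding inside $u_I$ and then differentiating the factors $(x_a-x)^\gamma$ and $D^\gamma u(x)$.

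Next I would expand each constant $u(x_a)$ by Taylor's formula about the point $x$, writing $u(x_a) = T_m(u)(x,x_a) + R_{m+1}(x_a,x)$ with $T_m(u)(x,x_a) = \sum_{|\gamma|\le m} \frac{1}{\gamma!} D^\gamma u(x)(x_a-x)^\gamma$ and the remainder in the form \eqref{eq:TaylorResidual}. This expansion is legitimate because $u\in C^{\ell+1}(\conv(\Omega))$ with $m+1 \le \ell+1$, and because the segment joining $x\in\Omega$ to each node $x_a\in P$ lies in the convex set $\conv(\Omega)$, so the required $(m+1)$-fold smoothness along the segment is available. Substituting and interchanging the finite sums gives
$$D^\alpha u_I(x) = \sum_{|\gamma|\le m} \frac{1}{\gamma!} D^\gamma u(x) \sum_{a\in I}(x_a-x)^\gamma D^\alpha w_a(x) + \sum_{a\in I} R_{m+1}(x_a,x)\, D^\alpha w_a(x).$$

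The heart of the argument is then to evaluate the inner sums in the first term by means of Lemma~\ref{lem:DCons} with $\beta = \alpha$. The hypotheses of that lemma are met, since $|\gamma|\le m \le n$ and $|\alpha|\le \min\{m,r\}\le r$. The identity \eqref{eq:DCons} therefore acts as a filter, yielding $\alpha!$ when $\gamma=\alpha$ and zero otherwise; because $|\alpha|\le m$, the index $\alpha$ indeed lies in the range of the outer sum, so exactly one term survives and
$$\sum_{|\gamma|\le m} \frac{1}{\gamma!} D^\gamma u(x) \sum_{a\in I}(x_a-x)^\gamma D^\alpha w_a(x) = \frac{1}{\alpha!}\,D^\alpha u(x)\,\alpha! = D^\alpha u(x).$$
Combining this with the untouched remainder term gives \eqref{eq:Duh} directly.

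I do not expect a substantial obstacle here; the proposition is essentially a bookkeeping consequence of Lemma~\ref{lem:DCons}. The only points demanding care are the order of operations noted above (differentiate the interpolant before expanding the nodal constants) and verifying that the index ranges $|\gamma|\le m\le n$ and $|\alpha|\le r$ align with the standing hypotheses of Lemma~\ref{lem:DCons}, together with the domain-convexity condition that guarantees the Taylor remainder is valid along each segment $[x,x_a]$.
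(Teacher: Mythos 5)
Your proposal is correct and follows essentially the same route as the paper's own proof: differentiate the interpolant so the derivative falls on the shape functions, Taylor-expand the nodal constants $u(x_a)$ about $x$, interchange the finite sums, and invoke Lemma~\ref{lem:DCons} to collapse the polynomial part to $D^\alpha u(x)$. The only difference is cosmetic (you are more explicit about the order-of-operations point and the convexity needed for the remainder formula), so there is nothing to add.
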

\begin{proof}
The proof follows that of Theorem~1 of~\cite{ciarlet_72}. From the Taylor expansion of order $m$ of $u$ at $x$ we have
\begin{equation}
    u(x_a) =
    \sum_{|\beta|\leq m}
    \frac{1}{\beta !} D^\beta u(x) (x_a - x)^{\beta}
    +
    R_{m + 1}(x, x_a).
\end{equation}
whence it follows that
\begin{equation}
\begin{split}
    D^\alpha u_I(x)
    & =
    \sum_{a\in I} u(x_a) D^\alpha w_a(x)
    \\ & =
    \sum_{a\in I} \left(
    \sum_{|\beta|\leq m}\frac{1}{\beta!} D^\beta u(x) (x_a - x)^{\beta} +
    R_{m+1}(x, x_a) \right) D^\alpha w_a(x) 
    \\ & =
    \sum_{|\beta|\leq m}\frac{1}{\beta!} D^\beta u(x)
    \left(\sum_{a\in I} D^\alpha w_a(x)(x_a - x)^{\beta}\right) + \sum_{a\in I}
    R_{m+1}(x, x_a) D^\alpha w_a(x),
\end{split}
\end{equation}
and \eqref{eq:Duh} follows from Lemma~\ref{lem:DCons}.
\end{proof}

We recall that a function $f \in C^\infty(\mathbb{R}^d)$ is said to be {\sl rapidly decreasing} if~\cite{rudin_91}
\begin{equation}\label{def:: rapidly decreasing function}
    \sup_{|\alpha|\leq N} \sup_{x\in\mathbb{R}^d}
    (1+|x|^2)^N |(D^\alpha f)(x)| < \infty
\end{equation}
for all $N=0$, $1$, $2$, $\dots$, where $|x|^2=\sum x_i^2$.

The next definition formalizes a polynomial-decay condition of the shape functions and their derivatives.

\begin{definition}[Approximation scheme with polynomial decay]
\label{def:: polynomial decay shape functions}
We say that an approximation scheme $\{I,W,P\}$ has a \emph{polynomial decay of order} $(r,s)$ for constants $c > 0$ and $h>0$ if the basis $W$ is in $C^r(\Omega)$, and
\begin{equation}\label{def:: eq::poly decay}
    \sup_{|\alpha|\leq r}
    \sup_{x\in\Omega}
    \sup_{a\in I}
    \left(1 + \left|\frac{x - x_a}{h} \right|^2\right)^s {h}^{|\alpha|}|D^\alpha w_a(x)| <
    c.
\end{equation}
A sequence of approximation schemes $\{I_k,W_k,P_k\}$ has a \emph{uniform polynomial decay of order} $(r,s)$ if there exists a constant $c >0$ and a sequence $h_k \to 0$ such that, for each $k$, $\{I_k,W_k,P_k\}$ has a polynomial decay of order $(r,s)$ for constants $c$ and $h_k$.
\end{definition}
We note that, if the shape functions are invariant under a linear transformation, then the left hand side of~\eqref{def:: eq::poly decay} is also invariant under the same transformation change.

The next proposition establishes a key concentration property of approximation schemes with polynomial decay.

\begin{proposition}[Shape-function concentration]
\label{prop:: sharp concentration bound}
Let $\{I, W, P\}$ be an approximation scheme. Suppose that there exists $\tau > 0$ such that $P$ has $h$-density bounded by $\tau$. Suppose, in addition, that the approximation scheme has polynomial decay of order $(r,s)$ for constants $c>0$ and $h$, with $2s > d$. Then, for every $\theta > 0$ there exists a constant $c_{\theta} > 0$ such that
\begin{equation}
    \sum_{{x}_a \in P \setminus {\bar B}({x}, c_{\theta}h)} |w_a({x})|
    \leq
    \theta,
\end{equation}
everywhere in $\Omega$.
\end{proposition}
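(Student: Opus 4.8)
The plan is to control the tail sum $\sum_{x_a \in P \setminus \bar B(x, c_\theta h)} |w_a(x)|$ by combining the pointwise polynomial-decay bound on each $|w_a(x)|$ with the ring-counting estimate of Proposition~\ref{prop:: UB number node points on rings}. The polynomial decay of order $(r,s)$ (taking $\alpha = 0$) gives, for every node $x_a$,
\begin{equation}
    |w_a(x)| \le c \left(1 + \left|\tfrac{x - x_a}{h}\right|^2\right)^{-s},
\end{equation}
so points far from $x$ contribute shape-function values that decay like $|x - x_a|^{-2s}$. The strategy is to partition the complement $P \setminus \bar B(x, c_\theta h)$ into the dyadic-like annular rings $R_t = \bar B(x, th) \setminus B(x, (t-1)h)$ for integers $t > c_\theta$, bound $|w_a(x)|$ uniformly on each ring, count the nodes on each ring, and sum the resulting series.

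**First I would** fix $x \in \Omega$ and write the tail sum as a sum over rings. For any node $x_a \in R_t$ we have $|x - x_a| \ge (t-1)h$, so
\begin{equation}
    |w_a(x)| \le c\left(1 + (t-1)^2\right)^{-s} \le c\, (t-1)^{-2s}
\end{equation}
for $t \ge 2$ (the innermost discarded ring can be absorbed into the constant). Proposition~\ref{prop:: UB number node points on rings} supplies a constant $c'$ depending only on $\tau$ and $d$ such that $\#(P \cap R_t) \le c' t^{d-1}$. Choosing the cutoff radius to be an integer multiple of $h$, say $c_\theta = T$ for an integer $T$ to be fixed, I would then estimate
\begin{equation}
    \sum_{x_a \in P \setminus \bar B(x, Th)} |w_a(x)|
    \le
    \sum_{t > T} \#(P \cap R_t)\, c\,(t-1)^{-2s}
    \le
    c\, c' \sum_{t > T} t^{d-1} (t-1)^{-2s}.
\end{equation}

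**The series converges** precisely because the hypothesis $2s > d$ makes the summand behave like $t^{d-1-2s}$ with exponent $d - 1 - 2s < -1$. Since the tail of this convergent series tends to $0$ as $T \to \infty$, one can choose $T = T(\theta)$ large enough that the bound is at most $\theta$; setting $c_\theta := T$ completes the argument, and the bound is uniform in $x \in \Omega$ because neither the decay constant $c$ nor the counting constant $c'$ depends on $x$.

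**The main obstacle** I expect is purely bookkeeping rather than conceptual: aligning the ring index $t$ with the true distance lower bound (handling the off-by-one between $t$ and $t-1$, and the degenerate rings $t=1,2$) so that the decay factor $(t-1)^{-2s}$ is legitimately available, and making sure the estimate is genuinely uniform over all $x \in \Omega$. The essential analytic content is the convergence of $\sum_t t^{d-1-2s}$, which is guaranteed by $2s > d$; the geometric work of counting nodes per ring has already been done in Proposition~\ref{prop:: UB number node points on rings}, so the remaining effort is just the careful summation of a convergent $p$-series tail.
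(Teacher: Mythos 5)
Your proposal is correct and follows essentially the same route as the paper's proof: decompose $P$ into the annular rings $U_t(x)$, bound $\# U_t(x) \le c' t^{d-1}$ via Proposition~\ref{prop:: UB number node points on rings}, bound $|w_a(x)|$ on each ring by the polynomial-decay hypothesis with $\alpha = 0$, and observe that the resulting series $\sum_t t^{d-1-2s}$ converges since $2s > d$, so an integer cutoff $c_\theta$ can be chosen to make the tail at most $\theta$. Your handling of the $t$ versus $t-1$ bookkeeping is, if anything, slightly more careful than the paper's.
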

\begin{proof}
For every nonnegative integer $t \geq 1$, let $U_t({x})$ be the ring of node points $U_t({x}) = \{{x_a} \in P\colon (t-1) h \leq |{x_a} - {x}| < t h \}$. Note that $P = \cup_{t=1}^{\infty} U_t(x)$. By Proposition~\ref{prop:: UB number node points on rings}, there exists a constant $c^{\prime}$ that depends on $\tau$ and $d$ such that, for any $t \geq 1$, the number of node points of $U_t(x)$ is at most $\# U_t({x}) \leq c^{\prime} t^{d-1}$. Since the approximation scheme has polynomial decay of order $(r,s)$ with $2s > d$, for any integer $c_{\theta} \geq 1$ we have
\begin{equation}
\begin{split}
    \sum_{{x}_a \in P \setminus {\bar B}({x}, c_{\theta}h)} |w_a({x})|
    & \le
    \sum_{t = c_{\theta}}^{\infty} \sum_{{x}_a
    \in U_t({x})} |w_a({x})|
    \leq
    \sum_{t = c_{\theta}}^{\infty} c^{\prime} t^{d-1} c
    ((t-1)^2+1)^{-s}
    \leq
    \sum_{t = c_{\theta}}^{\infty} c^{\prime} t^{d-1} c t^{-2s}
    \\ & \leq
    \sum_{t = c_{\theta}}^{\infty} c^{\prime} c t^{-1 - (2s - d)}.
\end{split}
\end{equation}
Note that the series $\sum_{t = 1}^{\infty} c^{\prime} c t^{-1 - (2s - d)}$ is finite. In particular, there exists a value $c_{\theta} < \infty$, depending on $d$, $\tau$, and $\theta$, such that $\sum_{t = c_{\theta}}^{\infty} c^{\prime} c t^{-1 - (2s - d)} \leq \theta$.
\end{proof}

For an $n$-consistent approximation scheme with sufficiently high polynomial decay, the following theorem provides a uniform interpolation error bound.

\begin{theorem}[Uniform interpolation error bound]\label{thm:: uniform error bound}
Let $\{I,W,P\}$ be an approximation scheme. Suppose that:
\begin{itemize}
    \item[i)] The approximation scheme is $n$-consistent, $n\geq 0$.
    \item[ii)] There exists $\tau > 0$ such that $P$ has $h$-density bounded by $\tau$.
    \item[iii)] The approximation scheme has polynomial decay of order $(r,s)$ with $2s > d + m + 1$, where ${m}= \min \{n,\ell\}$.
\end{itemize}
Let $u \in C^{\ell+1}(\overline{\conv}(\Omega))$. Then, there exists a constant $C < \infty$ such that
\begin{equation}
    \left|D^{\alpha}u_I({x}) - D^{\alpha}u({x}) \right|
    \leq
    C
    \left\|D^{m+1}u\right\|_{\infty} h^{m + 1 - |\alpha|},
\end{equation}
for every $|\alpha| \leq \min \{m, r\}$ and ${x} \in \Omega$.
\end{theorem}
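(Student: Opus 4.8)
The plan is to begin from the multipoint Taylor formula of Proposition~\ref{prop:Taylor}, which for $|\alpha| \le \min\{m,r\}$ represents the error exactly as
\begin{equation}
    D^\alpha u_I(x) - D^\alpha u(x) = \sum_{a\in I} R_{m+1}(x_a, x)\, D^\alpha w_a(x),
\end{equation}
so the entire task reduces to bounding this one series uniformly over $x \in \Omega$. Note that the hypothesis $u \in C^{\ell+1}(\overline{\conv}(\Omega))$ with $m+1 \le \ell+1$ guarantees both that $\|D^{m+1}u\|_\infty < \infty$ on the compact convex hull and that the remainder formula \eqref{eq:TaylorResidual} is applicable, since the evaluation point $x + \theta(x_a - x)$ lies on the segment joining $x$ and $x_a$, which is contained in $\overline{\conv}(\Omega)$ by convexity.

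I would then estimate the two factors in the summand separately. From \eqref{eq:TaylorResidual} together with $|(x_a - x)^\gamma| \le |x_a - x|^{m+1}$ one gets $|R_{m+1}(x_a,x)| \le C_1 \|D^{m+1}u\|_\infty\, |x_a - x|^{m+1}$, with $C_1 = \sum_{|\gamma|=m+1} 1/\gamma!$ depending only on $m$ and $d$; from the polynomial decay bound \eqref{def:: eq::poly decay} one gets $|D^\alpha w_a(x)| \le c\, h^{-|\alpha|} (1 + |x-x_a|^2/h^2)^{-s}$.

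Next I would organize the summation using the ring decomposition already employed in Proposition~\ref{prop:: sharp concentration bound}. Writing $U_t(x) = \{x_a \in P : (t-1)h \le |x_a - x| < th\}$, so that $P = \bigcup_{t\ge1} U_t(x)$, Proposition~\ref{prop:: UB number node points on rings} gives $\#U_t(x) \le c' t^{d-1}$. On each ring $|x_a - x| < th$ forces $|x_a-x|^{m+1} \le t^{m+1}h^{m+1}$, while $(1+|x-x_a|^2/h^2)^{-s} \le C_2 t^{-2s}$, the constant $C_2$ absorbing the innermost ring $t=1$ where $(t-1)=0$. Multiplying the node count and the two factor bounds, the contribution of ring $t$ is at most a constant times $\|D^{m+1}u\|_\infty\, h^{m+1-|\alpha|}\, t^{\,d-1+m+1-2s}$, and summing over $t$ yields
\begin{equation}
    \left|D^\alpha u_I(x) - D^\alpha u(x)\right| \le C_3 \|D^{m+1}u\|_\infty\, h^{m+1-|\alpha|} \sum_{t=1}^{\infty} t^{\,d+m-2s}.
\end{equation}

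I expect the only genuinely delicate point to be the bookkeeping that ties convergence of this last series to the sharp threshold in hypothesis iii): the exponent $d+m-2s$ is summable precisely when $d+m-2s < -1$, i.e.\ when $2s > d+m+1$, which is exactly the assumed inequality. Taking $C = C_3 \sum_{t\ge1} t^{\,d+m-2s} < \infty$ then completes the proof, with $C$ depending only on $d$, $m$, $\tau$, $s$, and the decay constant $c$. The uniformity in $x$ is automatic because every bound above is independent of $x$, and the power $h^{m+1-|\alpha|}$ arises cleanly from the factor $h^{m+1}$ in the remainder estimate and $h^{-|\alpha|}$ in the decay estimate.
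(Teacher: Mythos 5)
Your proposal is correct and follows essentially the same route as the paper's proof: the multipoint Taylor formula, the ring decomposition $U_t(x)$ with the node count from Proposition~\ref{prop:: UB number node points on rings}, the per-ring bounds on the remainder and on $D^\alpha w_a$, and the summability of $\sum_t t^{d+m-2s}$ under $2s > d+m+1$. The only differences are cosmetic choices of constants (your $C_1=\sum_{|\gamma|=m+1}1/\gamma!$ equals the paper's $d^{m+1}/(m+1)!$, and your $C_2$ plays the role of the paper's explicit factor absorbing the $t=1$ ring).
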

\begin{proof}
By Proposition~\ref{prop:Taylor},
\begin{equation}\label{eq:: thm1}
    \left|D^{\alpha}u_I({x}) - D^{\alpha}u({x}) \right|
    \leq
    \sum_{a \in I} |R_{m+1}({x}_a, {x})|
    \left|D^{\alpha} w_a({x})\right|
\end{equation}
for every multiindex $\alpha$ of degree less or equal to $\min\{m, r\}$ and every ${x} \in \Omega$. Next, we proceed to bound the right-hand side of this inequality. For each nonnegative integer $t \geq 1$, let $U_t({x})$ be the ring of nodal points $U_t({x}) = \{{x_a} \in P\colon (t-1)h \leq |{x_a} - {x}| < th\}$. Note that $P = \cup_{t=1}^{\infty} U_t(x)$. By Proposition~\ref{prop:: UB number node points on rings}, there exists a constant $c$ that depends on $\tau$ and $d$ such that, for any $t \geq 1$, the number of node points of $U_t(x)$ is at most $\# U_t({x}) \leq c t^{d-1}$. In addition, from~\eqref{eq:TaylorResidual} we have
\begin{equation}
    |R_{m+1}({x_a}, {x})|
    \leq
    \frac{d^{m+1}}{(m+1)!}
    \left\|D^{m+1}u\right\|_{\infty} (th)^{m+1}.
\end{equation}
By the assumption of polynomial decay there exists a constant $0 < c^{\prime} < \infty$ such that
\begin{equation}
    \left|D^{\alpha}w_a({x})\right|
    \leq
    c^{\prime} \left(\left|\frac{x - x_a}{h}\right|^2 + 1 \right)^{-s}h^{-|\alpha|}
    \leq
    c^{\prime} \left((t-1)^2 + 1 \right)^{-s}h^{-|\alpha|}
    \leq
    5 c^{\prime} t^{-2s}h^{-|\alpha|},
\end{equation}
for every $x_a \in U_t(x)$. From the preceding bounds we have
\begin{equation}
\begin{split}
    &
    \sum_{a \in I} |R_{m+1}({x}_a, {x})|
    \left|D^{\alpha}w_a({x})\right|
    \\ & =
    \sum_{t = 1}^{\infty} \sum_{{x}_a \in U_t({x})}
    |R_{m+1}({x}_a, {x})| \left|D^{\alpha}w_a({x})\right|
    \\ & \leq
    \sum_{t = 1}^{\infty} \sum_{{x}_a \in U_t({x})}
    \frac{d^{m+1}}{(m+1)!}\left\|
    D^{m+1}u\right\|_{\infty} (th)^{m+1} 5 c^{\prime} t^{-2s}h^{-|\alpha|}
    \\ & \leq
    5 c^{\prime} \frac{d^{m+1}}{(m+1)!} \left\|
    D^{m+1}u\right\|_{\infty} h^{m+1 - |\alpha|}
    \sum_{t = 1}^{\infty} \# U_t({x}) \, t^{m + 1 - 2s}
    \\ & \leq
    5 c^{\prime} \frac{d^{m+1}}{(m+1)!}\left\| D^{m+1}u\right\|_{\infty}
    h^{m+1 - |\alpha|} \sum_{t = 1}^{\infty} c t^{d-1} t^{m + 1 - 2s}
    \\ & \leq
    5 c^{\prime} c \frac{d^{m+1}}{(m+1)!}\left\|
    D^{m+1}u\right\|_{\infty} h^{m + 1 - |\alpha|}
    \sum_{t = 1}^{\infty} t^{d + m - 2s}.
\end{split}
\end{equation}
Since $d + m - 2s < -1$, it follows that $\sum_{t=1}^{\infty}
t^{d + m + 1 - 2s} < \infty$. Thus,
\begin{equation}
    \left|D^{\alpha} u_I({x}) - D^{\alpha} u({x}) \right|
    \leq
    C \left\|D^{m+1}u\right\|_{\infty} h^{m+1 - |\alpha|},
\end{equation}
for every $x \in \Omega$, where we note that the constant $C = 5 c^{\prime} c \frac{d^{m+1}}{(m+1)!} \left(\sum_{t=1}^{\infty} t^{d+m-2s}\right)$ depends on $\tau$, $d$, $c^{\prime}$, and $s$.
\end{proof}

The following corollaries to Theorem~\ref{thm:: uniform error bound} show that a function in $W^{m,p}(\Omega)$ can be approximated by means of consistent approximation schemes of polynomial decay.

\begin{corollary}
\label{coro:: approx in Sobolev space for smooth functions} Under
the assumptions of Theorem~\ref{thm:: uniform error bound},
\begin{equation}
    \left\|u_I - u \right\|_{W^{j,p}(\Omega)}
    \leq
    C \|D^{m+1}u\|_{\infty} h^{1 + m - j},
\end{equation}
for $ 1\leq p < \infty$, $j = \min\{n, r, \ell\}$ and every $u \in C^{\ell+1} (\overline{\conv}(\Omega))$.
\end{corollary}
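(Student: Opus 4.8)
**

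The goal is to deduce the Sobolev-norm bound from the pointwise
estimate of Theorem~\ref{thm:: uniform error bound} by integrating
over $\Omega$. The plan is to write the $W^{j,p}$ norm as a finite
sum over multiindices $\alpha$ with $|\alpha|\le j$ of the $L^p$
norms of $D^\alpha u_I - D^\alpha u$, apply the theorem's pointwise
bound inside each of these $L^p$ integrals, and then pull the bound
out of the integral since it is uniform in $x$.

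\begin{proof}[Proof sketch]
First I would note that $j=\min\{n,r,\ell\}\le\min\{m,r\}$, so the
pointwise estimate of Theorem~\ref{thm:: uniform error bound}
applies for every multiindex $\alpha$ with $|\alpha|\le j$. Writing
out the definition of the Sobolev norm,
\begin{equation}
    \|u_I - u\|_{W^{j,p}(\Omega)}^p
    =
    \sum_{|\alpha|\le j}
    \int_\Omega \left|D^\alpha u_I(x) - D^\alpha u(x)\right|^p\, dx ,
\end{equation}
I would bound each integrand using the theorem, which gives
$|D^\alpha u_I(x) - D^\alpha u(x)| \le C\,\|D^{m+1}u\|_\infty\,
h^{m+1-|\alpha|}$ for every $x\in\Omega$. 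Since the right-hand side
is a constant independent of $x$, each integral is at most that
constant raised to the $p$-th power times the Lebesgue measure
$|\Omega|$, which is finite because $\Omega$ is a domain (bounded).

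Next I would observe that among the exponents $m+1-|\alpha|$ with
$0\le|\alpha|\le j$, the smallest is $m+1-j$, and since $h$ may be
taken small (in fact $h^{m+1-|\alpha|}\le h^{m+1-j}$ once $h\le 1$),
the dominant term is $h^{m+1-j}$. Collecting the finitely many terms
of the sum, absorbing $|\Omega|$, the number of multiindices, and
the $p$-th-root into a single constant, and using $1+m-j=m+1-j$, I
would arrive at
\begin{equation}
    \|u_I - u\|_{W^{j,p}(\Omega)}
    \le
    C\,\|D^{m+1}u\|_\infty\, h^{1+m-j}.
\end{equation}

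The only mildly delicate point, rather than a genuine obstacle, is
bookkeeping the exponent: one must confirm that the worst (largest)
contribution among the derivative orders $|\alpha|\le j$ is the one
with $|\alpha|=j$, giving the stated rate $h^{1+m-j}$, and that the
finite measure of $\Omega$ together with the finite number of
multiindices can be folded into the constant $C$ without affecting
the power of $h$. Everything else is the routine passage from a
uniform pointwise bound to an $L^p$ bound over a bounded domain.
\end{proof}
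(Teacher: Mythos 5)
Your proposal is correct and is essentially the paper's own argument: the paper passes from the pointwise bound of Theorem~\ref{thm:: uniform error bound} to a $C^j(\bar{\Omega})$ bound and then invokes the continuous embedding $C^j(\bar{\Omega}) \hookrightarrow W^{j,p}(\Omega)$, which is exactly the integration over the bounded domain that you carry out by hand. The bookkeeping you flag (that $j=\min\{n,r,\ell\}=\min\{m,r\}$ so the theorem applies, and that the worst power of $h$ is $h^{m+1-j}$) is handled the same way, implicitly, in the paper.
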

\begin{proof}
By Theorem~\ref{thm:: uniform error bound}, there exists a constant
$0 < C < \infty$ such that,
\begin{equation}
    \left\|u_I - u \right\|_{C^j(\bar{\Omega})}
    \leq
    C \|D^{m+1}u\|_{\infty} h^{1 + m - j},
\end{equation}
so that the assertion follows from the continuous embedding
$C^j(\bar{\Omega}) \hookrightarrow W^{j,p}(\Omega)$.
\end{proof}

Convergence in $W^{j,p}(\Omega)$ finally follows from standard theory of approximation by continuous functions (cf.~e.~g., \cite{adams_03}). For completeness, we proceed to note a particular case of practical relevance. We recall that a domain $\Omega$ satisfies the {\sl segment condition} if, for all $x$ in the boundary of $\Omega$, there exists a neighborhood $U_x$ and a direction $y_x \neq 0$ such that, for any point $z \in \bar{\Omega} \cap U_x$, the point $z + t y_x$ belongs to $\Omega$, for all $0<t<1$ (\cite{adams_03}, \S 3.21). Convex domains satisfy the segment condition without additional restrictions on their boundaries. We additionally recall (\cite{adams_03}, thm.~3.22) that, if $\Omega$ satisfies the segment condition, then the functions of $C^{\infty}_c (\mathbb{R}^d)$ restricted to $\Omega$ are dense in $W^{j,p}(\Omega)$ for $1 \leq p < \infty$.

\begin{corollary}\label{coro:: density in Sobolev space}
Let $\Omega$ be a domain satisfying the segment condition.  Suppose that the assumptions of Theorem~\ref{thm:: uniform error bound} hold uniformly for $h_k\to 0$. Let $ 1\leq p < \infty$ and $j = \min\{n, r\}$. Then, for every $u \in W^{j,p}(\Omega)$ there exists a sequence $u_k \in X_k$ such that $u_k \to u$.
\end{corollary}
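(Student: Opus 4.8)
The plan is to reduce the general Sobolev density statement to the smooth-function estimate already furnished by Corollary~\ref{coro:: approx in Sobolev space for smooth functions}, by means of a two-stage approximation followed by a diagonal argument. First I would exploit the hypothesis that $\Omega$ satisfies the segment condition: by the density result recalled above (\cite{adams_03}, thm.~3.22), the restrictions to $\Omega$ of functions in $C^\infty_c(\R^d)$ are dense in $W^{j,p}(\Omega)$ for $1 \le p < \infty$. Hence, given $u \in W^{j,p}(\Omega)$, I may fix a sequence $g_\nu \in C^\infty_c(\R^d)$ with $\|u - g_\nu\|_{W^{j,p}(\Omega)} < 1/\nu$.

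Next, for each fixed $\nu$ I would apply Corollary~\ref{coro:: approx in Sobolev space for smooth functions} to the smooth function $g_\nu$. Since $g_\nu$ is of class $C^\infty$, it belongs to $C^{\ell+1}(\overline{\conv}(\Omega))$ for every $\ell$; choosing $\ell \ge \max\{n,r\}$ gives $m = \min\{n,\ell\} = n$ and $\min\{n,r,\ell\} = \min\{n,r\} = j$, so that the corollary applies with the same index $j$ appearing in the statement. Writing $(g_\nu)_k \in X_k$ for the approximant of $g_\nu$ produced by the $k$-th scheme, the corollary yields a constant $C$, independent of $\nu$ and $k$ by the assumed uniformity of the hypotheses of Theorem~\ref{thm:: uniform error bound}, such that $\|(g_\nu)_k - g_\nu\|_{W^{j,p}(\Omega)} \le C \|D^{n+1} g_\nu\|_\infty h_k^{1+n-j}$. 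Because $j = \min\{n,r\} \le n$, the exponent $1 + n - j \ge 1$ is strictly positive, and since $h_k \to 0$ the right-hand side tends to $0$ as $k \to \infty$ for each fixed $\nu$.

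Finally I would combine the two estimates by diagonalization to build a single sequence $u_k \in X_k$. For each $\nu$ choose $K_\nu$, strictly increasing in $\nu$, so large that $\|(g_\nu)_k - g_\nu\|_{W^{j,p}(\Omega)} < 1/\nu$ for all $k \ge K_\nu$; for $K_\nu \le k < K_{\nu+1}$ set $u_k = (g_\nu)_k$ (and define $u_k$ arbitrarily for the finitely many $k < K_1$). Then, for $k$ in the $\nu$-th block, the triangle inequality gives $\|u - u_k\|_{W^{j,p}(\Omega)} \le \|u - g_\nu\|_{W^{j,p}(\Omega)} + \|g_\nu - (g_\nu)_k\|_{W^{j,p}(\Omega)} < 2/\nu$, and since $\nu \to \infty$ as $k \to \infty$, it follows that $u_k \to u$ in $W^{j,p}(\Omega)$.

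The routine parts are the two individual estimates. The step requiring genuine care is the diagonalization: the interpolation error of $g_\nu$ carries the factor $\|D^{n+1} g_\nu\|_\infty$, which will in general blow up as $\nu \to \infty$ because smooth approximants of a mere $W^{j,p}$ function need not have controlled higher derivatives. The diagonal construction circumvents this precisely because, for each \emph{fixed} $\nu$, that factor is a finite constant and the positive power $h_k^{1+n-j}$ can be made to dominate it by taking $k$ large; one must therefore refrain from choosing $\nu$ as a function of $k$ a priori, and instead let $k$ outrun $\nu$ through the increasing thresholds $K_\nu$.
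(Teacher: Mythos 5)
Your proof is correct and follows essentially the same route as the paper: approximate $u$ by restrictions of $C^\infty_c(\R^d)$ functions using the segment condition, apply the smooth-function estimate of Corollary~\ref{coro:: approx in Sobolev space for smooth functions} to each, and pass to a diagonal sequence. Your explicit remark on why the blow-up of $\|D^{n+1}g_\nu\|_\infty$ is harmless is exactly the point the paper's diagonal argument implicitly relies on.
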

\begin{proof}
By the density of $C^{\infty}_c (\mathbb{R}^d)$ in $W^{j,p}(\Omega)$, there is a sequence of functions $v_i\in C^\infty_c(\mathbb{R}^d)$ whose restrictions to $\Omega$ converge to $u $ in $W^{j,p}(\Omega)$. The corollary then follows by approximating each $v_i$ by a sequence $u_{i_k} \in X_k$ and passing to a diagonal sequence.
\end{proof}
\end{section}

\begin{section}{Application to the Local Maximum-Entropy (LME) Approximation Scheme: Interior Estimates} \label{sec:: convergence analysis}

In this Section, we specialize the results of Section~\ref{sec:: general convergence analysis} to the LME approximation schemes.  We begin with a brief review of the definition and some of the properties of the Local Max-Ent Approximation scheme of Arroyo and Ortiz~\cite{arroyo_ortiz_06} (see also~\cite{arroyo_ortiz_06_b, sukumar_07} for a description of the method, and~\cite{sukumar_04, sukumar_05, sukumar_07_b} for related work). We recall that a {\sl convex approximation scheme} is a first-order consistent approximation scheme $\{I,W,P\}$ whose shape functions are non-negative. Convex approximation schemes satisfy a weak Kronecker-delta property at the boundary (cf.~\cite{arroyo_ortiz_06}), i.~e., the approximation on the boundary of the domain does not depend on the nodal data over the interior points. This property simplifies the enforcement of essential boundary conditions. As pointed out in~\cite{arroyo_ortiz_06}, in a convex approximation scheme the shape functions ${w}_a({x})$, $a\in I$, are well-defined if and only if ${x} \in \overline{\conv}(P)$. Therefore, for such schemes to be feasible the domain $\Omega$ must be a subset of $\overline{\conv}(P)$.

The Local Maximum-Entropy (LME) approximation scheme \cite{arroyo_ortiz_06} is a convex approximation scheme that aims to satisfy two objectives simultaneously:
\begin{enumerate}
    \item {\it Unbiased statistical inference} based on the nodal data.
    \item Shape functions of {\it least width}.
\end{enumerate}
Since for each point $x$, the shape functions of a convex approximation scheme are nonnegative and add up to $1$, they can be thought of as the probability distribution of a random variable. The statistical inference of the shape functions is then measured by the {\it entropy} of the associated probability distribution, as defined in information theory~\cite{shannon_48_a, jaynes_57_a, khinchin_57_a}. The entropy of a probability distribution $p$ over $I$ is:
\begin{equation}\label{eq:: entropy functional}
    H(p) = -\sum_{a \in I} p_a \log p_a,
\end{equation}
where $0 \log 0 = 0$. The least biased probability distribution $p$ is that which maximizes the entropy. In addition, the {\sl width} of a non-negative function $w$ about a point $\xi$ is identified with the second moment
\begin{equation}
\label{eq:: Euclidean width}
    U_\xi(w) = \int_{\Omega}w({x})|{x} - \xi|^2 \, dx.
\end{equation}
Thus, the width $U_\xi(w)$ measures how concentrated $w$ is about $\xi$. According to this measure of width, the most local approximation scheme is that which minimizes the total width
\begin{equation}
    \label{eq:: Rajan functional}
    U(W)
    =
    \sum_{a \in I} U_a(w_a)
    =
    \int_{\Omega} \sum_{a \in I}
    w_a({x})|{x} - {x}_a|^2 \, dx.
\end{equation}
The Local Maximum-Entropy approximation schemes combine the functionals~\eqref{eq:: entropy functional} and~\eqref{eq:: Rajan functional} into a single objective. More precisely, for a parameter $\beta >0$, the LME approximation scheme is the minimizer of the functional
\begin{equation}
    F_\beta(W)
    =
    \beta U(W) - H(W)
\end{equation}
under the restriction of first-order consistency. Because of the local nature of this functional, it can be minimized pointwise, leading to the local convex minimization problem:
\begin{equation}\nonumber
\left.
\begin{aligned}
    &
    \min f_{\beta}({x}, {w}({x}))
    =
    \sum_{a \in I} w_a({x})
    |{x} - {x}_a|^2
    +
    \frac{1}{\beta}
    \sum_{a \in I} w_a({x}) \log w_a({x}),
    \\ &
    \mbox{subject to: }
    {w_a}({x}) \geq 0,\ a \in I, \quad
    \sum_{a \in I} w_a({x}) = 1, \quad
    \sum_{a \in I} w_a({x}) {x}_a = {x}.
\end{aligned}
\right\} \qquad\qquad \text{(LME)}
\end{equation}
In the limit of $\beta\to\infty$ the function $f_{\beta}$ reduces to the power function of Rajan~\cite{rajan_94_a}, whose minimizers define the piecewise-affine shape functions supported by the Delaunay triangulations associated with $P$.

Next we collect alternative characterizations of the LME shape functions based on duality theory. Let $Z\colon \mathbb{R}^d \times \mathbb{R}^d \to \mathbb{R}$ be the {\sl partition function}
\begin{equation} \label{def:: partition function}
    Z({x}, {\lambda})
    =
    \sum_{a \in I}
    e^{-\beta|{x} - {x}_a|^2
    +
    \langle{\lambda}({x}), {x} - {x}_a\rangle}
\end{equation}
of the point set. For every point ${x} \in \overline{\conv}(P)$, the problem (LME) has a unique solution $\{w_a^*({x})\colon  a \in I\}$. Moreover, for every point ${x} \in \conv(P)$, the optimal shape functions $w^*_a({x})$ at ${x}$ are of the form
\begin{equation}\label{eq:: optimal probas}
    w^*_a({x})
    =
    \frac{e^{
    -\beta|{x} - {x}_a|^2
    +
    \langle{{{\lambda}}^*}({x}), {x} - {x}_a\rangle}}
    {\sum_{b \in I}
    e^{-\beta|{x} - {x}_b|^2
    +
    \langle{{{\lambda}}^*}({x}), {x} - {x}_b\rangle}},
\end{equation}
where the vector ${\lambda}^*({x}) \in \mathbb{R}^d$ minimizes the function
\begin{equation} \label{eq:: optimal lambda}
    \log Z({x}, {\lambda})
    =
    \log \left(
    \sum_{a \in I}
    e^{-\beta|{x} - {x}_a|^2
    +
    \langle{\lambda},{x} - {x}_a\rangle}\right).
\end{equation}
At points ${x}$ belonging to the boundary of $\conv {P}$, the shape functions take expressions similar to~\eqref{eq:: optimal probas} that solely involve the node points on the minimal face of $\overline{\conv}(P)$ that contains ${x}$. The gradient of $\log Z({x}, {\lambda})$ with respect to ${\lambda}$ is
\begin{equation}
    {r}({x}, {\lambda})
    \equiv
    \frac{\partial}{\partial {\lambda}} \log Z({x},
    {\lambda})
    =
    \sum_{a \in I} w_a({x},
    {\lambda}) ({x} - {x}_a).
\end{equation}
In addition, the Hessian of $\log Z({x}, {\lambda})$ with respect to ${\lambda}$ follows as
\begin{equation}
    {J}({x}, {\lambda})
    \equiv
    \frac{\partial^2 }{\partial{\lambda}^2} \log Z({x}, {\lambda})
    =
    \sum_{a \in I}
    w_a({x}, {\lambda}) ({x} - {x}_a)
    \otimes
    ({x} - {x}_a) - { r}({x}, {\lambda})
    \otimes
    {r}({x}, {\lambda}).
\end{equation}
Since ${ r}({x}, {\lambda}^*({x})) = {0}$,
\begin{equation}\label{eq:: def J*}
    {J^*}({x})
    \equiv
    {J}({x}, {{{\lambda}} ^*}({x}))
    =
    \sum_{a \in I} w_a^*({x}) ({x} - {x}_a)\otimes ({x} - {x}_a).
\end{equation}
It can be shown that ${J^*}({x})$ is positive definite.  In addition, the optimal shape functions $w^*_a\colon\conv(P) \to \mathbb{R}$ are $C^{\infty}$ and have gradient
\begin{equation}\label{eq:: grad p}
    \nabla w^*_a({x})
    =
    - w^*_a({x}) ({J^*}({x}))^{-1}({x} - {x}_a).
\end{equation}
We refer the reader to~\cite{arroyo_ortiz_06} for the proofs of the preceding results and identities.

The following lemma shows that, for a point set $P$ that is an {\it h}-covering of its closed convex hull $\overline{\conv}(P)$, and for every point ${x} \in \conv(P)$, for any vector ${\lambda}\neq {0}$ there exists at least one node point ${x}_{\lambda}$ in $P$ that is close to ${x}$, and such that ${x} - {x}_{\lambda}$ is closely aligned with ${\lambda}$.

\begin{lemma}\label{lemma:: x-lambda}
Let $P$ be a finite point set that is an {\it h}-covering of its convex hull $\overline{\conv}(P)$ for some $h > 0$. Let ${x}$ be a point in $\conv(P)$. Let $\varepsilon >0$ be such that ${\bar B}({x}, \varepsilon h) \subset \overline{\conv}(P)$. Let ${\lambda}\neq 0 \in \mathbb{R}^d$. Then, there exists a node point ${x}_{\lambda} \in P$ such that
\begin{itemize}
    \item[i)] $\varepsilon h \leq |{x} - {x}_{\lambda}| \leq (\varepsilon + 1)h$,
    \item[ii)] $|{\lambda}| \varepsilon h \leq \langle{\lambda}, {x} - {x}_{\lambda}\rangle$.
\end{itemize}
\end{lemma}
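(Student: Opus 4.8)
The plan is to reduce condition (ii)---which asks for a node point $x_\lambda$ such that $x - x_\lambda$ has a large component along $\lambda$---to a statement about a single well-chosen point of $\overline{\conv}(P)$, and then to exploit the $h$-covering hypothesis. Write $\hat\lambda = \lambda/|\lambda|$ for the unit vector in the direction of $\lambda$, so that condition (ii) is equivalent to $\langle\hat\lambda, x - x_\lambda\rangle \geq \varepsilon h$. The natural place to look for $x_\lambda$ is ``upstream'' of $x$ in the $-\hat\lambda$ direction. Accordingly, I would set $z = x - \varepsilon h\,\hat\lambda$. Since $|z - x| = \varepsilon h$, we have $z \in \bar B(x, \varepsilon h) \subset \overline{\conv}(P)$ by hypothesis, so $z$ is a legitimate target for the covering.

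Because $P$ is an $h$-covering of $\overline{\conv}(P)$ and $z \in \overline{\conv}(P)$, Definition~\ref{def:: h-covering} furnishes a $d$-simplex $T_z$ of size $h_{T_z} < h$, with vertices $v_0,\dots,v_d \in P$, such that $z \in T_z$. Writing $z$ in barycentric coordinates, $z = \sum_{i=0}^d \mu_i v_i$ with $\mu_i \geq 0$ and $\sum_i \mu_i = 1$, I would take the inner product with $\hat\lambda$. Using $x - z = \varepsilon h\,\hat\lambda$ and $|\hat\lambda| = 1$ gives $\varepsilon h = \langle\hat\lambda, x - z\rangle = \sum_{i=0}^d \mu_i\,\langle\hat\lambda, x - v_i\rangle$. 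Thus $\varepsilon h$ is expressed as a convex combination of the numbers $\langle\hat\lambda, x - v_i\rangle$, and hence at least one index $i$ must satisfy $\langle\hat\lambda, x - v_i\rangle \geq \varepsilon h$. I would then set $x_\lambda = v_i$ for that index; since $v_i \in P$ this is an admissible node point, and condition (ii) follows at once upon multiplying through by $|\lambda|$.

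It remains to verify condition (i). For the lower bound, Cauchy--Schwarz gives $|x - x_\lambda| \geq \langle\hat\lambda, x - x_\lambda\rangle \geq \varepsilon h$. For the upper bound, I would combine the triangle inequality with the size bound on the simplex: since both $z$ and $x_\lambda = v_i$ lie in $T_z$, whose enclosing ball has diameter $h_{T_z} < h$, we have $|z - x_\lambda| < h$, and therefore $|x - x_\lambda| \leq |x - z| + |z - x_\lambda| < \varepsilon h + h = (\varepsilon + 1)h$. I do not anticipate a genuine obstacle here; the only point requiring care is the correct choice of the auxiliary point $z$ together with the precise reading of the definitions of ``$h$-covering'' and ``size,'' which jointly guarantee that $z$ lies in the hull and that every vertex of the covering simplex is within distance $h$ of $z$. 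The averaging (pigeonhole over the barycentric weights) step is the conceptual heart of the argument: it is what converts a statement about the single interior point $z$ into the existence of a suitable node point.
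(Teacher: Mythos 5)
Your proposal is correct and follows essentially the same route as the paper: both introduce the auxiliary point $z = x - \varepsilon h\,\lambda/|\lambda|$, cover it by a simplex with vertices in $P$, and observe that at least one vertex must satisfy $\langle\lambda, x - v\rangle \ge |\lambda|\varepsilon h$ (the paper phrases this via a halfspace containing an extreme point of the simplex, you via pigeonhole over barycentric weights --- the same convexity fact). The only other cosmetic difference is that you obtain the lower bound in (i) by Cauchy--Schwarz from (ii), whereas the paper expands $|x - x_\lambda|^2$ and discards a nonnegative cross term; both are valid.
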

\begin{proof}
Let $\tilde{{x}}$ be the point $\tilde{{x}} = {x} - \frac{\varepsilon h}{|{\lambda}|} {\lambda}$. Since the distance between ${x}$ and $\tilde{{x}}$ is $ \varepsilon h$, $\tilde{{x}} \in {\bar B}({x}, \varepsilon h) \subset \overline{\conv}(P)$. In particular, since the point set $P$ is an {\it h}-covering of $\overline{\conv}(P)$, there exists a $d$-simplex $T_{\tilde{{x}}}$ of size at most $h$, with vertices in $P$ that contains $\tilde{{x}}$. Let $H^+ \subset \mathbb{R}^d$ be the halfspace $\{{z} \in \mathbb{R}^d\colon \langle {\lambda}, \tilde{{x}} - {z}\rangle \geq 0\}$. The point $\tilde{{x}}$ belongs to $H^+$. Moreover, since the $d$-simplex $T_{\tilde{{x}}}$ contains $\tilde{{x}}$, it follows that at least one extreme point of $T_{\tilde{{x}}}$ also belongs to $H^+$. Let ${x}_{\lambda}$ be that extreme point. Note that ${x}_{\lambda}$ is also a node point of $P$. We have the estimate
\begin{equation}
    |{x} -{x}_{\lambda}|
    \leq
    |{x} - \tilde{{x}}|
    +
    |\tilde{{x}} - {x}_{\lambda}|
    \leq
    \varepsilon h + h
    =
    (\varepsilon + 1) h.
\end{equation}
In addition, we have
\begin{equation}\label{eq:: identity_}
    |{x} -{x}_{\lambda}|^2
    =
    |{x} - {\tilde{x}}|^2 + |{\tilde{x}}-{x}_{\lambda}|^2
    +
    2\langle {x} -{\tilde{x}}, {\tilde{x}} -{x}_{\lambda} \rangle.
\end{equation}
By the definition of ${\tilde{x}}$, and since ${x}_{\lambda}$ belongs to $H^+$, it follows that
\begin{equation}
    \langle {x} -{\tilde{x}}, {\tilde{x}} -{x}_{\lambda} \rangle
    =
    \frac{\varepsilon h}{|{\lambda}|}\langle {\lambda},
    {\tilde{x}} -{x}_{\lambda} \rangle \geq 0.
\end{equation}
From this inequality and eq.~\eqref{eq:: identity_} we obtain
\begin{equation}
    |{x} -{x}_{\lambda}|^2 \geq |{x} - {\tilde{x}}|^2
    =
    (\varepsilon h)^2,
\end{equation}
or $|{x} -{x}_{\lambda}| \geq \varepsilon h$. Finally, from the definition of $\tilde{x}$ we have
\begin{equation}
    \langle {\lambda},{x} - {\tilde{x}}\rangle
    =
    |{\lambda}|\varepsilon h
\end{equation}
and
\begin{equation}
    \langle {\lambda},{x} - {x}_{\lambda}\rangle
    =
    \langle {\lambda},{x} - {\tilde{x}}\rangle
    +
    \langle {\lambda},{\tilde{x}} - {x}_{\lambda}\rangle
    \geq
    |{\lambda}|\varepsilon h,
\end{equation}
where we have used that ${x}_{{\lambda}}$ belongs to $H^+$ and,
hence, $\langle {\lambda},{\tilde{x}} - {x}_{\lambda}\rangle \geq
0$.
\end{proof}

In view of (\ref{eq:: optimal probas}), in order to verify that the LME shape functions have polynomial decay we require a bound on the minimizer ${\lambda}^*({x}) \in \mathbb{R}^d$ of the partition function $Z({x}, { {{\lambda}}})$, eqs.~\eqref{def:: partition function} and~\eqref{eq:: optimal lambda}. To this end, we begin with the following lemma.

\begin{lemma}\label{lemma:: UB Z(0)}
Let $P$ be a point set that is an {\it h}-covering of $\Omega$ with {\it h}-density bounded by $\tau$, for some $h$, $\tau > 0$. Let $\beta = \frac{\gamma}{h^2}$ for some $\gamma > 0$. Then, there exists a constant $c_{Z}$ that depends on $\gamma$, $\tau$, and $d$, such that
\begin{equation}
    Z({x}, {0})\leq c_{Z}
\end{equation}
for every ${x} \in \conv(P)$.
\end{lemma}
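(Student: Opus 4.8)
The plan is to exploit the concentric-ring decomposition of $P$ already used in the proofs of Theorem~\ref{thm:: uniform error bound} and Proposition~\ref{prop:: sharp concentration bound}, combined with the observation that setting $\lambda = 0$ collapses the partition function to a pure Gaussian sum. Indeed, from~\eqref{def:: partition function} with $\beta = \gamma/h^2$ we have
\[
    Z({x}, {0}) = \sum_{a \in I} e^{-\beta|{x} - {x}_a|^2} = \sum_{a \in I} e^{-\frac{\gamma}{h^2}|{x} - {x}_a|^2},
\]
so the task reduces to bounding this sum uniformly in ${x}$ by a constant depending only on $\gamma$, $\tau$, and $d$.

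First I would partition $P$ into the rings $U_t({x}) = \{{x}_a \in P \colon (t-1)h \leq |{x}_a - {x}| < th\}$, $t \geq 1$, so that $P = \bigcup_{t=1}^\infty U_t({x})$. Since $U_t({x}) \subset {\bar B}({x}, th) \setminus B({x}, (t-1)h)$, Proposition~\ref{prop:: UB number node points on rings} provides a constant $c$, depending only on $\tau$ and $d$, with $\# U_t({x}) \leq c\, t^{d-1}$. I note that only the $h$-density hypothesis enters here; the $h$-covering assumption in the statement is not actually needed for this particular bound.

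Next, on each ring the lower bound $|{x} - {x}_a| \geq (t-1)h$ controls the Gaussian weight via $e^{-\frac{\gamma}{h^2}|{x} - {x}_a|^2} \leq e^{-\gamma (t-1)^2}$ for every ${x}_a \in U_t({x})$. Summing over the rings then gives
\[
    Z({x}, {0}) \leq \sum_{t=1}^\infty \# U_t({x})\, e^{-\gamma(t-1)^2} \leq c \sum_{t=1}^\infty t^{d-1} e^{-\gamma(t-1)^2},
\]
and I would conclude by setting $c_Z = c \sum_{t=1}^\infty t^{d-1} e^{-\gamma(t-1)^2}$, which plainly depends only on $\gamma$, $\tau$, and $d$.

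The remaining point is the finiteness of $c_Z$, and here there is no genuine obstacle: for any fixed $\gamma > 0$ and $d$ the exponential factor $e^{-\gamma(t-1)^2}$ dominates the polynomial factor $t^{d-1}$, so the series converges. In fact this is the one place where the argument is strictly easier than its analogue in Proposition~\ref{prop:: sharp concentration bound}: there the algebraic decay of the shape functions forced the constraint $2s > d$ to ensure convergence, whereas the stronger Gaussian decay at $\lambda = 0$ makes convergence immediate with no condition on the parameters at all.
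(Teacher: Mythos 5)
Your proof is correct and follows essentially the same route as the paper's: the same decomposition into rings $U_t(x)$, the same appeal to Proposition~\ref{prop:: UB number node points on rings} for $\# U_t(x) \le c\,t^{d-1}$, and the same Gaussian bound $e^{-\gamma(t-1)^2}$ leading to the convergent series defining $c_Z$. Your side remarks (that the $h$-covering hypothesis is not actually used, and that convergence here is easier than in Proposition~\ref{prop:: sharp concentration bound}) are both accurate.
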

\begin{proof}
For every nonnegative integer $t \geq 1$, let $U_t({x})$ be the subset of node points $U_t({x}) = \{{x_a} \in P\colon (t-1)h \leq |{x_a} - {x} |< th\}$. Then, by Proposition~\ref{prop:: UB number node points on rings} we have
\begin{equation}\label{eq:: lemma 1, eq 2}
\begin{split}
    Z({x},{0})
    &=
    \sum_{a \in I} e^{-\beta |{x} - {x}_a|^2}
    =
    \sum_{t = 1}^{\infty}\left( \sum_{{x}_a
    \in U_t({x})} e^{-\beta |{x} - {x}_a|^2}\right)
    \\ & \leq
    \sum_{t = 1}^{\infty}\left( \# U_t({x}) \,
    e^{-\beta (t-1)^2h^2}\right)
    \leq
    \sum_{t = 1}^{\infty} c t^{d-1} e^{-\gamma (t-1)^2}
    =
    c_Z.
\end{split}
\end{equation}
It is readily verified that the series of the right hand side is absolutely convergent. Moreover, because this series is defined in terms of $\gamma$, $\tau$, and $d$, its limit $c_Z$ also depends on $\gamma$, $\tau$, and $d$ only.
\end{proof}

By optimality, ${\lambda}^*({x})$ has the property that $Z({x}, {\lambda}^*({x})) \leq Z({x}, {0})$. This observation, combined with the upper bound on $Z({x}, {0})$ of Lemma~\ref{lemma:: UB Z(0)}, suffices to estimate $|{{\lambda}}^*({x})|$.

\begin{lemma}\label{lemma:: UB lambda}
Let $P$ be a point set that is an {\it h}-covering of $\Omega$ with {\it h}-density bounded by $\tau$, for some $h$, $\tau > 0$. Let $\beta = \frac{\gamma}{h^2}$ for some $\gamma > 0$ and $\varepsilon > 0$. Then, there exists a constant $c_{\lambda}>0$ that depends on $\gamma$, $\tau$, and $d$ only such that
\begin{equation}\label{lem:LambdaBound}
    |{\lambda}^*({x})|
    \leq
    \frac{c_{\lambda}}{\min\{\varepsilon, 1 \}h}
\end{equation}
for every point $x$ such that ${\bar B}({x}, \varepsilon h) \subset \overline{\conv}(P)$.
\end{lemma}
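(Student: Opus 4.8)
The plan is to exploit the optimality of $\lambda^*(x)$ together with the well-aligned node point furnished by Lemma~\ref{lemma:: x-lambda}. Since $\lambda^*(x)$ minimizes $\log Z(x, \cdot)$, and hence $Z(x, \cdot)$, we have the two-sided control
\[
    Z(x, \lambda^*(x)) \le Z(x, 0) \le c_Z ,
\]
where the last inequality is Lemma~\ref{lemma:: UB Z(0)}. The strategy is to extract from the left-hand side a single term of the partition function whose exponent grows linearly in $|\lambda^*(x)|$; comparing the resulting lower bound with the constant upper bound $c_Z$ and taking logarithms will then isolate $|\lambda^*(x)|$.

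If $\lambda^*(x) = 0$ the asserted bound is trivial, so assume $\lambda^*(x) \neq 0$. First I would reduce to a radius at most $1$: set $\varepsilon' = \min\{\varepsilon, 1\}$, so that $\bar{B}(x, \varepsilon' h) \subset \bar{B}(x, \varepsilon h) \subset \overline{\conv}(P)$. Applying Lemma~\ref{lemma:: x-lambda} with this $\varepsilon'$ and with $\lambda = \lambda^*(x)$ produces a node point $x_\lambda \in P$ satisfying $|x - x_\lambda| \le (\varepsilon' + 1)h \le 2h$ and $\langle \lambda^*(x), x - x_\lambda\rangle \ge |\lambda^*(x)|\,\varepsilon' h$. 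Retaining only the term indexed by $x_\lambda$ in the sum~\eqref{def:: partition function} and using $\beta = \gamma/h^2$ gives
\[
    Z(x, \lambda^*(x))
    \ge
    e^{-\beta |x - x_\lambda|^2 + \langle \lambda^*(x), x - x_\lambda\rangle}
    \ge
    e^{-4\gamma + |\lambda^*(x)|\,\varepsilon' h}.
\]

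Combining this with $Z(x, \lambda^*(x)) \le c_Z$ and taking logarithms yields $|\lambda^*(x)|\,\varepsilon' h \le \log c_Z + 4\gamma$, whence
\[
    |\lambda^*(x)|
    \le
    \frac{\log c_Z + 4\gamma}{\varepsilon' h}
    =
    \frac{c_\lambda}{\min\{\varepsilon, 1\}\, h},
\]
with $c_\lambda = \log c_Z + 4\gamma$; since $c_Z$ depends only on $\gamma$, $\tau$, $d$ by Lemma~\ref{lemma:: UB Z(0)}, so does $c_\lambda$ (taking $c_Z \ge 1$ without loss of generality so that $c_\lambda > 0$). The one subtle point, and the reason the factor $\min\{\varepsilon, 1\}$ rather than $\varepsilon$ appears, is the decision to truncate the radius at $1$: this keeps the Gaussian penalty $\beta|x - x_\lambda|^2$ bounded by the absolute constant $4\gamma$ no matter how large $\varepsilon$ is, while the alignment estimate (ii) of Lemma~\ref{lemma:: x-lambda} supplies the linear-in-$|\lambda^*(x)|$ growth that dominates. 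I expect the main obstacle to be recognizing that a single, suitably aligned node point already suffices to lower-bound the partition function, and calibrating the truncated radius so that the $\gamma$-dependent penalty does not contaminate the $\varepsilon$-dependence of the final estimate.
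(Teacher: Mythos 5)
Your proof is correct and follows essentially the same route as the paper's: both apply Lemma~\ref{lemma:: x-lambda} with the truncated radius $\min\{\varepsilon,1\}h$ to a well-aligned node point, lower-bound $Z(x,\lambda^*(x))$ by the single term $e^{-4\gamma + |\lambda^*(x)|\min\{\varepsilon,1\}h}$, and compare with the upper bound $c_Z$ from Lemma~\ref{lemma:: UB Z(0)} to obtain $c_\lambda = \log c_Z + 4\gamma$. The only cosmetic difference is that the paper phrases the argument contrapositively (any $\lambda$ exceeding the bound cannot be optimal), whereas you apply the inequality directly to $\lambda^*(x)$.
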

\begin{proof}
We note that, since $\log$ is an increasing function, ${\lambda}^*({x})$ also minimizes $Z({x}, {\lambda})$. Let $\varepsilon_2 = \min\{\varepsilon, 1 \}$. We proceed to find a constant $c_{\lambda}$ such that, if $|{\lambda}|\geq \frac{c_{\lambda}}{\varepsilon_2 h}$, then $Z({x},{\lambda}) > Z({x},{0})$. To this end, let ${\lambda} \neq {0}$ be a fixed vector. Since ${\bar B}({x}, \varepsilon_2 h) \subset \overline{\conv}(P)$ and since $P$ is an {\it h}-covering of $\overline{\conv}(P)$, by Lemma~\ref{lemma:: x-lambda} there exists a point ${x}_{\lambda} \in P$ such that $\varepsilon_2 h \leq |{x} - {x}_{\lambda}| \leq (\varepsilon_2 + 1)h$ and $\langle{\lambda}, {x} - {x}_{\lambda}\rangle \geq |{\lambda}|\varepsilon_2 h$. Using these inequalities and noting that $\varepsilon_2 \leq 1$, we further obtain
\begin{equation}\label{eq:: lemma 1, eq 1}
    Z({x},{\lambda})
    =
    \sum_{a \in I}
    e^{-\beta |{x} - {x}_a|^2 + \langle{\lambda}, {x} - {x}_a\rangle}
    \geq
    e^{
    -\beta |{x} - {x}_{\lambda}|^2
    + \langle{{\lambda}}, {x}
    - {x}_{\lambda}\rangle
    }
    \geq
    e^{-\beta (1+ \varepsilon_2 )^2 h^2 + \varepsilon_2 h |{\lambda}|} \geq
    e^{-4 \gamma + \varepsilon_2 h |{\lambda}|} .
\end{equation}
By Lemma~\ref{lemma:: UB Z(0)}, there exists a constant $c_Z$ that depends on $\gamma$, $\tau$, and $d$, such that $Z({x},{0}) \leq c_Z$. Combining this bound with eq.~\eqref{eq:: lemma 1, eq 1}, it follows that a sufficient condition for ${\lambda}$ not to be optimal is that $e^{-4 \gamma + \varepsilon_2 h |{\lambda}|} > c_Z$ or, equivalently,
\begin{align}
    |{\lambda}|
    &>
    \frac{\ln c_{Z} + 4\gamma}{\min\{\varepsilon, 1\} h}
    =
    \frac{c_{\lambda}}{\min\{\varepsilon, 1\} h}.
\end{align}
Therefore, (\ref{lem:LambdaBound}) is a necessary condition for ${\lambda}^*({x})$ to be optimal.
\end{proof}

We note that, for fixed $\varepsilon >0$ and for points $x$ at distance $\varepsilon$ or greater to the boundary of $\overline{\conv}(P)$, the upper bound (\ref{lem:LambdaBound}) is $\text{O}(h^{-1})$. By contrast, for points ${x} \in \overline{\conv}(P)$ arbitrarily close of the boundary of $\overline{\conv}(P)$, the right hand side of (\ref{lem:LambdaBound}) diverges. The following example shows that $|{\lambda}^*({x})|$ may indeed diverge near the boundary.

\begin{example}\label{example:: unstable lambda}
{\rm Let $\Omega = [a, b] \subset \mathbb{R}$, $h = b-a$ and let $P = \{a, b\}$ be a point set of $\Omega$. Let $\beta = \frac{\gamma}{h^2}$ for a some $\gamma >0$. The optimality condition for $\lambda^*(x)$ is
\begin{equation}
    \frac{\partial Z(x, \lambda)}{\partial \lambda}
    =
    e^{-\frac{\gamma}{h^2}(x - a)^2
    +
    \lambda^*(x)(x-a)}(x-a)
    +
    e^{-\frac{\gamma}{h^2}(x - a-h)^2
    +
    \lambda^*(x)(x-a-h)}(x-a-h)=0.
\end{equation}
For this condition we find
\begin{equation}\label{eq:: optimal lambda example 1}
    \lambda^*(x)
    =
    \frac{\log(a + h - x) - \log(x - a)}{h}
    +
    \frac{\gamma}{h^2}(2x - 2a - h).
\end{equation}
For a fixed $0 < \varepsilon <1$, and for points $x \in (a+\varepsilon h, a+h - \varepsilon h)$, we indeed have $|\lambda^*(x)| = \text{O}(h^{-1})$. However, $\lim_{x \to a^+}\lambda^*(x) = \infty$, and $\lim_{x \to b^-}\lambda^*(x) = - \infty$. We note that the LME shape functions for this case reduce to
\begin{subequations}
\begin{align}
    w_a^*(x) &= \frac{a + h -x}{h}, \label{eq:: w_a example}\\
    w_b^*(x) &= \frac{x-a}{h}. \label{eq:: w_b example}
\end{align}
\end{subequations}
In particular, the shape functions and their derivatives are bounded in $\Omega$ even though the value of $|\lambda^*(x)|$ is unbounded at the boundary. From a computational perspective, this example suggests that computing the shape functions and their derivatives using Equations~\eqref{eq:: optimal lambda} and~\eqref{eq:: optimal probas} may be unstable near the boundary, even if the shape functions and their derivatives are themselves well-behaved. In Section~\ref{sec:: boundary-density} we will examine the behavior of the shape functions near the boundary more thoroughly.}
\hfill$\square$
\end{example}

The following lemma supplies the requisite estimate of the partition function $Z$.

\begin{lemma}\label{lemma:: LB and UB of Z}
Under the assumptions of Lemma~\ref{lemma:: UB lambda}, there exist constants $m_{Z}$, $M_Z >0$ that depend on $\gamma$, $\tau$, $\varepsilon$, and $d$ only and such that
\begin{equation}\label{eq:: LB and UB of Z}
    m_Z \leq Z({x}, {\lambda}^*({x}))
    \leq
    M_Z
\end{equation}
for every point $x$ such that ${\bar B}({x}, \varepsilon h) \subset \overline{\conv}(P)$.
\end{lemma}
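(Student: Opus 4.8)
The plan is to establish the two bounds by quite different means: the upper bound $M_Z$ follows almost immediately from the optimality of ${\lambda}^*({x})$, whereas the lower bound $m_Z$ is obtained by isolating a single, suitably chosen term of the partition function and showing that it cannot be too small.

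For the upper bound I would use that ${\lambda}^*({x})$ minimizes $\log Z({x},\cdot)$ and hence, since $\log$ is increasing, also minimizes $Z({x},\cdot)$. Therefore $Z({x},{\lambda}^*({x})) \leq Z({x},{0})$, and Lemma~\ref{lemma:: UB Z(0)} gives $Z({x},{0}) \leq c_Z$. Taking $M_Z = c_Z$, which depends only on $\gamma$, $\tau$, and $d$, completes this half.

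For the lower bound I would exploit that $Z({x},{\lambda}^*({x}))$ is a sum of strictly positive terms, so it suffices to bound one term from below. Since ${\bar B}({x},\varepsilon h) \subset \overline{\conv}(P)$ we have ${x} \in \overline{\conv}(P)$, and the $h$-covering property furnishes a $d$-simplex of size less than $h$ containing ${x}$ with vertices in $P$; at least one such vertex ${x}_a$ then satisfies $|{x} - {x}_a| < h$. For the corresponding exponent I would bound the quadratic part by $\beta|{x} - {x}_a|^2 = \frac{\gamma}{h^2}|{x} - {x}_a|^2 < \gamma$ and, using the Cauchy--Schwarz inequality together with the estimate $|{\lambda}^*({x})| \leq \frac{c_\lambda}{\min\{\varepsilon,1\}h}$ of Lemma~\ref{lemma:: UB lambda}, the linear part by $|\langle {\lambda}^*({x}), {x} - {x}_a\rangle| \leq |{\lambda}^*({x})|\,|{x} - {x}_a| \leq \frac{c_\lambda}{\min\{\varepsilon,1\}}$. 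Consequently this single term bounds $Z({x},{\lambda}^*({x}))$ from below by $e^{-\gamma - c_\lambda/\min\{\varepsilon,1\}} =: m_Z > 0$, a constant depending on $\gamma$, $\tau$, $\varepsilon$, and $d$ only.

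The main obstacle is the lower bound. A priori the linear term $\langle {\lambda}^*({x}), {x} - {x}_a\rangle$ could be large and negative, which would drive every term of $Z$ toward zero and preclude any positive lower bound. The decisive ingredient is the a priori control $|{\lambda}^*({x})| = \text{O}(h^{-1})$ supplied by Lemma~\ref{lemma:: UB lambda}, which exactly matches the $h^{-1}$ scale of the nearby node and keeps the inner product bounded independently of $h$. This is also where the hypothesis ${\bar B}({x},\varepsilon h) \subset \overline{\conv}(P)$ enters, accounting for the dependence of $m_Z$ on $\varepsilon$.
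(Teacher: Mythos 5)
Your proposal is correct and follows essentially the same route as the paper: the upper bound via optimality and Lemma~\ref{lemma:: UB Z(0)}, and the lower bound by isolating a single term at a node within distance $h$ of $x$ (guaranteed by the $h$-covering) and controlling the exponent with $\beta|x-x_a|^2<\gamma$ and the $\lambda^*$-bound of Lemma~\ref{lemma:: UB lambda}. The resulting constant $m_Z = e^{-\gamma - c_\lambda/\min\{\varepsilon,1\}}$ is exactly the one the paper obtains.
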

\begin{proof}
By optimality, $Z({x}, {\lambda}^*({x})) \leq Z({x}, {0})$ and, by Lemma~\ref{lemma:: UB Z(0)}, $Z({x}, {\lambda}^*({x})) \leq c_Z = M_Z$ for every ${x} \in \conv(P)$. Since $P$ is an {\it h}-covering of $\overline{\conv}(P)$, there exists a point ${x}_0 \in P$ at distance to ${x}$ less or equal to $h$. In addition, by Lemma~\ref{lemma:: UB lambda}, there exists a constant $c_{\lambda}$ such that $| {\lambda}^*({x})| \leq \frac{c_{\lambda}}{\varepsilon_2 h}$, where $\varepsilon_2 = \min\{\varepsilon, 1\}$. We thus have
\begin{equation}\label{eq:: LB Z}
    Z({x}, {\lambda}^*({x}))
    \geq
    e^{
    -\beta|{x} - {x}_0|^2
    +
    \langle{\lambda}^*({x}), {x} - {x}_0\rangle
    }
    \geq
    e^{-\gamma-|{\lambda}^*({x})| \, |{x} - {x}_0|}
    \geq
    e^{-\gamma- \frac{c_{\lambda}}{\varepsilon_2}} = m_Z
    > 0,
\end{equation}
as advertised.
\end{proof}

Recall that ${J^*}({x}) \in \mathbb{R}^{d\times d}$ is the Hessian of $\log Z(x, \lambda^*(x))$ with respect to $\lambda$, eq.~\eqref{eq:: def J*}. We proceed to estimate $\|{J^*}({x})^{-1}\|$.

\begin{lemma}\label{lemma:: UB on inv J}
Let $P$ be a point set that is an {\it h}-covering of $\Omega$ with {\it h}-density bounded by $\tau$, for some $h$, $\tau > 0$. Let $\beta = \frac{\gamma}{h^2}$ for some $\gamma > 0$. Let $\varepsilon > 0$. Let ${x}$ be such that ${\bar B}({x}, \varepsilon h) \subset \overline{\conv}(P)$. Then, there exists a constant $c_{J^{-1}} > 0$ that depends on $\tau$, $\gamma$, $\varepsilon$, and $d$ such that
\begin{equation}\label{eq:JinvBound}
    \|{J^*}({x})^{-1}\|
    \equiv
    \sup_{{ y}\neq 0}
    \frac{|{J^*}({x})^{-1}({ y})|}{|{ y}|}
    \leq
    c_{J^{-1}} h^{-2}.
\end{equation}
\end{lemma}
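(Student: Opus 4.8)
The plan is to reduce the operator-norm bound \eqref{eq:JinvBound} to a uniform lower bound on the smallest eigenvalue of $J^*(x)$. Since $J^*(x)$ is symmetric and positive definite, $\|J^*(x)^{-1}\|$ equals the reciprocal of its smallest eigenvalue, so it suffices to produce a constant $c>0$, depending only on $\tau$, $\gamma$, $\varepsilon$, and $d$, with $\langle e, J^*(x)e\rangle \geq c\,h^2$ for every unit vector $e\in\mathbb{R}^d$. Recalling \eqref{eq:: def J*}, the Rayleigh quotient is
\begin{equation}
    \langle e, J^*(x)e\rangle = \sum_{a\in I} w_a^*(x)\,\langle e, x - x_a\rangle^2 ,
\end{equation}
a sum of nonnegative terms. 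The key observation is that it is enough to retain a single well-chosen term: for each direction $e$ I will exhibit one node that simultaneously has a substantial projection onto $e$ and carries a non-negligible shape-function weight.

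First I would fix a unit vector $e$ and set $\varepsilon_2=\min\{\varepsilon,1\}$, so that $\bar{B}(x,\varepsilon_2 h)\subset \bar{B}(x,\varepsilon h)\subset \overline{\conv}(P)$. Applying Lemma~\ref{lemma:: x-lambda} with ${\lambda}=e$ (and $\varepsilon_2$ in place of $\varepsilon$) produces a node $x_e\in P$ with $\varepsilon_2 h \leq |x-x_e|\leq (\varepsilon_2+1)h\leq 2h$ and $\langle e, x-x_e\rangle \geq \varepsilon_2 h$; in particular $\langle e, x-x_e\rangle^2 \geq \varepsilon_2^2 h^2$. Dropping all other nonnegative terms gives
\begin{equation}
    \langle e, J^*(x)e\rangle \geq w_e^*(x)\,\varepsilon_2^2 h^2 ,
\end{equation}
so the whole problem reduces to bounding the single weight $w_e^*(x)$ from below.

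Next I would estimate $w_e^*(x)$ via the closed form \eqref{eq:: optimal probas}. In the numerator, since $|x-x_e|\leq 2h$ and $\beta=\gamma/h^2$ we have $\beta|x-x_e|^2\leq 4\gamma$, while by Cauchy--Schwarz together with the bound $|{\lambda}^*(x)|\leq c_\lambda/(\varepsilon_2 h)$ of Lemma~\ref{lemma:: UB lambda} we have $\langle{\lambda}^*(x), x-x_e\rangle \geq -|{\lambda}^*(x)|\,|x-x_e|\geq -2c_\lambda/\varepsilon_2$; hence the numerator is at least $e^{-4\gamma-2c_\lambda/\varepsilon_2}$. For the denominator, the upper bound $Z(x,{\lambda}^*(x))\leq M_Z$ of Lemma~\ref{lemma:: LB and UB of Z} applies. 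Combining, $w_e^*(x)\geq e^{-4\gamma-2c_\lambda/\varepsilon_2}/M_Z =: c_w >0$, a constant depending only on $\gamma$, $\tau$, $\varepsilon$, and $d$.

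Putting the pieces together, $\langle e, J^*(x)e\rangle \geq c_w\,\varepsilon_2^2 h^2$ uniformly in $e$, so the smallest eigenvalue of $J^*(x)$ is at least $c_w\,\varepsilon_2^2 h^2$ and \eqref{eq:JinvBound} follows with $c_{J^{-1}}=(c_w\,\varepsilon_2^2)^{-1}$. The main obstacle is the penultimate step: controlling the individual weight $w_e^*(x)$ is where all the previously established control on ${\lambda}^*(x)$ and $Z$ must be fed in, and the crucial conceptual point is that one need not understand the full distribution $\{w_a^*(x)\}$ — it suffices that a single node forced by the $h$-covering property (via Lemma~\ref{lemma:: x-lambda}) retains an order-one fraction of the total mass. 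One must also check that $c_w$ and the chosen $x_e$ give bounds uniform in $e$, which holds because the estimates $|x-x_e|\leq 2h$, the bound on $|{\lambda}^*(x)|$, and $M_Z$ are all independent of the direction.
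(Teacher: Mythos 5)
Your proposal is correct and follows essentially the same route as the paper's proof: both reduce the bound to a lower bound on the smallest eigenvalue via the Rayleigh quotient of $J^*(x)$ from \eqref{eq:: def J*}, retain the single term corresponding to the node $x_e$ supplied by Lemma~\ref{lemma:: x-lambda}, and control that term's weight using the bound on $|\lambda^*(x)|$ from Lemma~\ref{lemma:: UB lambda} together with the upper bound $M_Z$ on the partition function from Lemma~\ref{lemma:: LB and UB of Z}. The only differences are cosmetic (you slightly loosen the constants, e.g.\ replacing $(\varepsilon_2+1)^2\gamma$ by $4\gamma$).
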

\begin{proof}
Let ${ u} \neq {0}$ be a fixed vector. Then, from eq.~(\ref{eq:: def J*}) we have
\begin{equation}\label{eq:: uJu rewritten}
    { u}^T {J^*}({x}) { u}
    =
    \frac{\sum_{a \in I}
    e^{
    -\beta |{x} - {x}_a |^2
    +
    \langle{\lambda}^*({x}), {x} - {x}_a\rangle}
    \langle { u}, {x} - {x}_a\rangle^2} {Z({x}, {\lambda}^*({x}))}.
\end{equation}
Next, we analyze the numerator and denominator of the right-hand side in turn. Let $\varepsilon_2 = \min\{\varepsilon, 1\}$. By Lemma~\ref{lemma:: x-lambda}, there exists a point ${x}_{ u} \in P$ such that $\varepsilon_2 h \leq |{x} - {x}_{ u}| \leq (\varepsilon_2 + 1) h$ and $\langle{ u}, {x} - {x}_{ u}\rangle \geq |{ u}| \varepsilon_2h$. Since ${\bar B}({x}, \varepsilon h) \subset \overline{\conv}(P)$, by Lemma~\ref{lemma:: UB lambda} there exists a constant $c_{\lambda}$ such that $|{\lambda}^*({x})|\leq \frac{c_{\lambda}}{\varepsilon_2h}$, and we have
\begin{equation}
    |\langle{\lambda}^*({x}), {x} - {x}_{ u}\rangle|
    \leq
    |{\lambda}^*({x})| \, |{x} - {x}_{ u}|
    \leq
    \frac{c_{\lambda}}{\varepsilon_2 h}(\varepsilon_2 + 1)h
    =
    \frac{c_{\lambda}}{\varepsilon_2}(\varepsilon_2 + 1).
\end{equation}
Hence,
\begin{equation}\label{eq:: LB of numerator of uJu}
\begin{split}
    &
    \sum_{a \in I}
    e^{
    -\beta |{x} - {x}_a |^2
    +
    \langle{\lambda}^*({x}), {x} - {x}_a\rangle
    }
    \langle { u}, {x} - {x}_a\rangle ^2
    \\ & \geq
    e^{
    -\beta |{x} - {x}_{ u} |^2
    +
    \langle{\lambda}^*({x}), {x} - {x}_{ u}\rangle
    }
    \langle{ u}, {x} - {x}_{ u}\rangle^2
    \geq
    e^{
    -(\varepsilon_2+ 1)^2\gamma
    -
    \frac{c_{\lambda}}{\varepsilon_2}(\varepsilon_2+ 1)}
    |{ u}|^2 \varepsilon_2^2 h^2.
\end{split}
\end{equation}
where we write $\beta = \frac{\gamma}{h^2}$. Combining the bound supplied by Lemma~\ref{lemma:: LB and UB of Z} with eq.~\eqref{eq:: LB of numerator of uJu}, we get
\begin{equation}\label{eq::uJu}
    |{ u}^T {J^*}({x}) { u}|
    \geq
    e^{
    -(\varepsilon_2+ 1)^2\gamma
    -
    \frac{c_{\lambda}}{\varepsilon_2}
    (\varepsilon_2+1)}
    \frac{\varepsilon_2^2}{M_Z}
    |{ u}|^2 h^2 = c_J |{ u}|^2 h^2,
\end{equation}
where $c_J = e^{-(\varepsilon_2+ 1)^2\gamma - \frac{c_{\lambda}}{\varepsilon_2}(\varepsilon_2+ 1)}\frac{\varepsilon_2^2}{M_Z} > 0$ depends on $\gamma$, $\tau$, $\varepsilon$, and $d$ only. Let $\lambda_{\rm min}({x})$ be the smallest eigenvalue of ${J^*}({x})$. Since ${J^*}({x})$ is positive-definite~\cite{arroyo_ortiz_06}, it follows that $\lambda_{\rm min}({x}) > 0$. Inequality (\ref{eq::uJu}) then implies that $\lambda_{\rm min}({x}) \geq c_J h^2$. Since $\|{J^*}({x})^{-1}\| = 1/\lambda_{\rm min}({x})$, the estimate (\ref{eq:JinvBound}) follows immediately with $c_{J^{-1}} = 1/c_J$.
\end{proof}

We are finally in a position to estimate the derivatives of the LME shape functions.

\begin{proposition}\label{prop:: bound on grad}
Let $P$ be a point set that is an {\it h}-covering of $\Omega$ with {\it h}-density bounded by $\tau$, for some $h$, $\tau > 0$. Let $\beta = \frac{\gamma}{h^2}$, for some $\gamma > 0$ and $\varepsilon > 0$. Let $W = \{w^*_a\colon a \in I\}$ be the optimal shape functions of the LME approximation scheme with node set $P$ and parameter $\beta$.  Then,
\begin{equation}\label{eq:DwBound}
    |\nabla w^*_a({x})|
    \leq
    c_{J^{-1}} \, w^*_a({x}) |{x} - {x}_a| \, h^{-2},
\end{equation}
for every point $x$ such that ${\bar B}({x}, \varepsilon h) \subset \overline{\conv}(P)$ and every point ${x}_a \in P$.
\begin{proof}
The estimate (\ref{eq:DwBound}) follows immediately from Lemma~\ref{lemma:: UB on inv J} and eq.~\eqref{eq:: grad p}.
\end{proof}
\end{proposition}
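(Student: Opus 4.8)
The plan is to read the bound off directly from the closed-form expression for the gradient of the LME shape functions, eq.~\eqref{eq:: grad p}, and then substitute the operator-norm estimate for the inverse Hessian established in Lemma~\ref{lemma:: UB on inv J}. Recall that eq.~\eqref{eq:: grad p} gives
\begin{equation}
    \nabla w^*_a({x}) = - w^*_a({x}) \, ({J^*}({x}))^{-1} ({x} - {x}_a),
\end{equation}
which is valid at every point ${x}$ where the shape functions are smooth, and in particular wherever ${\bar B}({x}, \varepsilon h) \subset \overline{\conv}(P)$, the hypothesis under which we are working.

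First I would take the Euclidean norm of both sides. Since the LME scheme is a convex approximation scheme, its shape functions are nonnegative, so $|w^*_a({x})| = w^*_a({x})$ and the scalar factor passes outside the norm with no sign subtlety, yielding
\begin{equation}
    |\nabla w^*_a({x})| = w^*_a({x}) \, \big|({J^*}({x}))^{-1}({x} - {x}_a)\big|.
\end{equation}
Next I would apply the definition of the operator norm to the vector ${x} - {x}_a$, so that $\big|({J^*}({x}))^{-1}({x} - {x}_a)\big| \leq \|({J^*}({x}))^{-1}\| \, |{x} - {x}_a|$. Finally, invoking Lemma~\ref{lemma:: UB on inv J}, whose hypotheses match ours exactly, gives $\|({J^*}({x}))^{-1}\| \leq c_{J^{-1}} h^{-2}$; combining the three estimates delivers the claimed bound~\eqref{eq:DwBound}.

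There is essentially no obstacle internal to this proposition: all of the analytic difficulty has already been absorbed into Lemma~\ref{lemma:: UB on inv J}, which in turn rests on the lower bound $\lambda_{\rm min}({x}) \geq c_J h^2$ for the smallest eigenvalue of ${J^*}({x})$ and hence on the whole chain of estimates for $Z$, $\lambda^*$, and the alignment Lemma~\ref{lemma:: x-lambda}. The only point worth double-checking is the bookkeeping, namely that the standing hypotheses of the present proposition (the $h$-covering and $h$-density assumptions, the scaling $\beta = \gamma/h^2$, and the ball condition ${\bar B}({x}, \varepsilon h) \subset \overline{\conv}(P)$) are precisely those demanded by Lemma~\ref{lemma:: UB on inv J}. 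This ensures that the constant $c_{J^{-1}}$ appearing in~\eqref{eq:DwBound} is exactly the one furnished by that lemma, carrying the same dependence on $\tau$, $\gamma$, $\varepsilon$, and $d$, and that the bound holds uniformly over all ${x}_a \in P$ with no further restriction on the node in question.
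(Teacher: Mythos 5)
Your proposal is correct and follows exactly the paper's own (one-line) argument: apply the gradient formula~\eqref{eq:: grad p}, bound the inverse Hessian by Lemma~\ref{lemma:: UB on inv J}, and combine; you merely spell out the norm manipulations that the paper leaves implicit.
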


Next we show that the LME approximation scheme has polynomial decay of order $(1, s)$ for every $s \geq 1$.

\begin{proposition}\label{prop:: LME-shape functions have poly-decay}
Let $\{I,W,P\}$ be an LME approximation scheme. Suppose that $P$ is an $h$-covering of $\Omega$, $P$ has $h$-density bounded by $\tau$, $\beta = \gamma / h^2$ for some $\gamma > 0$. Let $\varepsilon > 0$, and $s \geq 1$. Then, there exists a constant $c>0$ (depending on $d$, $\gamma$, $\tau$, $\varepsilon$, and $s$) such that the approximation scheme has polynomial decay of order $(1, s)$ for $c$ and $h$ in $\Omega_{\varepsilon h} = \{x\in \mathbb{R}^d \text{ s.~t. } {\bar B}({x}, \varepsilon h) \subset \overline{\conv}(P)\}$.
\end{proposition}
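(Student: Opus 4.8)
The goal is to establish the polynomial decay bound
\begin{equation}\nonumber
    \sup_{|\alpha|\leq 1}
    \sup_{x\in\Omega_{\varepsilon h}}
    \sup_{a\in I}
    \left(1 + \left|\frac{x - x_a}{h}\right|^2\right)^s h^{|\alpha|}|D^\alpha w^*_a(x)| < c
\end{equation}
for the LME shape functions, for every $s \geq 1$. Since this is a decay condition of order $(1,s)$, I need to control both $w^*_a(x)$ itself (the $|\alpha|=0$ term) and its gradient $\nabla w^*_a(x)$ (the $|\alpha|=1$ terms), weighted by the polynomial factor $(1+|(x-x_a)/h|^2)^s$. The plan is to first bound the weighted shape function and then bootstrap the gradient estimate from Proposition~\ref{prop:: bound on grad}, which expresses $|\nabla w^*_a(x)|$ in terms of $w^*_a(x)$, $|x - x_a|$, and $h^{-2}$.

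First I would estimate the shape function itself. Using the explicit formula~\eqref{eq:: optimal probas}, I have $w^*_a(x) = e^{-\beta|x-x_a|^2 + \langle\lambda^*(x), x-x_a\rangle}/Z(x,\lambda^*(x))$. The denominator is bounded below by $m_Z > 0$ via Lemma~\ref{lemma:: LB and UB of Z}. For the numerator, I would substitute $\beta = \gamma/h^2$ and bound the exponent: the quadratic term $-\gamma|x-x_a|^2/h^2$ decays like a Gaussian in the scaled variable $|x-x_a|/h$, while the linear term $\langle\lambda^*(x), x-x_a\rangle$ is controlled by $|\lambda^*(x)|\,|x-x_a| \leq (c_\lambda/(\varepsilon_2 h))|x-x_a|$ from Lemma~\ref{lemma:: UB lambda}, with $\varepsilon_2 = \min\{\varepsilon,1\}$. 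Setting $\rho = |x-x_a|/h$, the numerator is thus bounded by $e^{-\gamma\rho^2 + (c_\lambda/\varepsilon_2)\rho}$. A Gaussian-times-polynomial estimate then gives, for any fixed $s$, a constant $c_s$ with $(1+\rho^2)^s e^{-\gamma\rho^2 + (c_\lambda/\varepsilon_2)\rho} \leq c_s$ uniformly in $\rho \geq 0$, since the quadratic decay in the exponent dominates both the linear term and the polynomial weight. Dividing by $m_Z$ handles the $|\alpha|=0$ case.

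For the gradient, I would invoke Proposition~\ref{prop:: bound on grad} to write $|\nabla w^*_a(x)| \leq c_{J^{-1}}\, w^*_a(x)\,|x-x_a|\,h^{-2}$, so that $h\,|\nabla w^*_a(x)| \leq c_{J^{-1}}\,w^*_a(x)\,\rho\,h^{-1}\cdot h = c_{J^{-1}}\,w^*_a(x)\,\rho$. Multiplying by the weight $(1+\rho^2)^s$ and inserting the numerator bound from the previous step, the relevant quantity to control is $(1+\rho^2)^s\,\rho\,e^{-\gamma\rho^2 + (c_\lambda/\varepsilon_2)\rho}$, divided by $m_Z$. This is again a Gaussian-dominated expression in $\rho$: the extra factor of $\rho$ merely raises the polynomial degree by one, so the same Gaussian-beats-polynomial argument yields a finite supremum. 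Taking the maximum of the two constants from the $|\alpha|=0$ and $|\alpha|=1$ cases produces the desired $c$, whose dependence on $d$, $\gamma$, $\tau$, $\varepsilon$, and $s$ is inherited from $m_Z$, $c_\lambda$, $c_{J^{-1}}$, and the Gaussian constant.

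The main obstacle is bookkeeping rather than conceptual: I must verify that every constant entering the bound ($m_Z$, $c_\lambda$, $c_{J^{-1}}$) genuinely depends only on the advertised parameters and \emph{not} on $h$ or on the particular point $x \in \Omega_{\varepsilon h}$, which is exactly what the preceding lemmas guarantee uniformly over the region $\bar B(x,\varepsilon h)\subset\overline{\conv}(P)$. The one point requiring a little care is confirming that the linear exponential term $e^{(c_\lambda/\varepsilon_2)\rho}$ does not spoil summability: because the coefficient $c_\lambda/\varepsilon_2$ is a fixed constant independent of $h$, completing the square shows $-\gamma\rho^2 + (c_\lambda/\varepsilon_2)\rho \leq (c_\lambda/\varepsilon_2)^2/(4\gamma)$, so the Gaussian factor dominates for all large $\rho$ and the polynomial weight $(1+\rho^2)^s\rho$ is absorbed, giving a uniform bound for every fixed $s \geq 1$.
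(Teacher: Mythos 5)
Your proposal is correct and takes essentially the same route as the paper's proof: bound the numerator of $w^*_a$ by $e^{-\gamma\rho^2+\tilde c_\lambda\rho}$ with $\rho=|x-x_a|/h$ using Lemmas~\ref{lemma:: UB lambda} and~\ref{lemma:: LB and UB of Z}, absorb the weight $(1+\rho^2)^s$ by a Gaussian-beats-polynomial estimate, and then bootstrap the gradient case from Proposition~\ref{prop:: bound on grad}. If anything, you are slightly more explicit than the paper about absorbing the extra factor $\rho$ arising from $|x-x_a|h^{-1}$ into the polynomial weight.
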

\begin{proof}
We recall that the LME shape functions are $C^{\infty}$ on $\conv (P)$ (\cite{arroyo_ortiz_06}). Next, we show that there exists a constant $c > 0$ that depends on $\gamma$, $\tau$, $d$, $\varepsilon$,  and $s$, such that, for any $k$,
\begin{equation}\label{eq:: poly decay 1}
    \sup_{|\alpha|\leq 1}
    \sup_{x \in \Omega_{\varepsilon h}}
    \sup_{a \in I}
    \left(1 + \left|\frac{x - x_a}{h}\right|^2\right)^{s}
    h^{|\alpha|} \left|D^{\alpha}w_a^*({x}) \right|
    \leq
    c.
\end{equation}
From Lemmas~\ref{lemma:: UB lambda} and~\ref{lemma:: LB and UB of Z} we have
\begin{equation}\label{eq:: exp decay}
\begin{split}
    0
    &\leq w^*_a({x})
    =
    \frac{e^{
    -\beta|{x} - {x}_a|^2
    +
    \langle{\lambda}^*({x}), {x} - {x}_a \rangle}}{Z({x},
    {\lambda}^*({x}))}
    \\ & \leq
    \frac{e^{
    -\gamma|({x} - {x}_a)/h|^2
    + |{\lambda}^*({x})| |{x} - {x}_a|}}{m_Z}
    \leq
    \frac{e^{
    -\gamma|({x} - {x}_a)/h|^2
    +
    \tilde{c}_{\lambda}|({x} - {x}_a)/h|}}{m_Z},
\end{split}
\end{equation}
with $\tilde{c}_{\lambda} = c_{\lambda}/\min\{\varepsilon, 1 \}$. In addition,
\begin{align}\label{eq:: bound for poly decay order 0}
    {}&
    \sup_{x \in \Omega_{\varepsilon h}}
    \sup_{a \in I}
    \left(1 + \left|\frac{x - x_a}{h}\right|^2
    \right)^{s} w_a^*({x})
    \nonumber \\ & \leq
    \sup_{x \in \Omega_{\varepsilon h}}
    \sup_{a \in I}
    \left(1 + \left|\frac{x - x_a}{h}\right|^2\right)^{s}
    \frac{e^{-\gamma|({x} - {x}_a)/h|^2 +
    \tilde{c}_{\lambda}
    |({x} - {x}_a)/h|}}{m_Z}
    \nonumber \\ & \leq
    c^{\prime}
    :=
    \sup_{t \geq 0} \left(1 + t^2\right)^{s}
    \frac{e^{-\gamma t^2 + \tilde{c}_{\lambda} t}}{m_Z}
    <
    \infty,
\end{align}
since $\frac{e^{-\gamma t^2 + \tilde{c}_{\lambda} t}}{m_Z}$ is a rapidly decreasing function of $t$. We note that $c^{\prime}$ is defined in terms of the constants $d$, $\gamma$, $\tau$, $\varepsilon$, and $s$. Thus, by Proposition~\ref{prop:: bound on grad} we have
\begin{align}\label{eq:: bound for poly decay order 1}
    {}&\sup_{x \in \Omega_{\varepsilon h}}
    \sup_{a \in I}
    \left(1 + \left|\frac{x - x_a}{h}\right|^2 \right)^{s}h
    \left|\nabla w_a^*({x}) \right|
    \nonumber \\ & \leq
    \sup_{x \in \Omega_{\varepsilon h}}
    \sup_{a \in I}
    \left(1 + \left|\frac{x - x_a}{h}\right|^2 \right)^{s}h
    c_{J^{-1}} w^*_a({x})|{x} - {x}_a| h^{-2}
    \nonumber\\ & \leq
    \sup_{x \in \Omega_{\varepsilon h}}
    \sup_{a \in I}
    \left(1 + \left|\frac{x - x_a}{h}\right|^2 \right)^{s}
    c_{J^{-1}} w^*_a({x})
    <
    c_{J^{-1}} c^{\prime},
\end{align}
as advertised.
\end{proof}

The next Theorem bounds uniformly the error of the approximate function $u_k$ and its derivatives to a smooth function $u$ and its derivatives. The result is based on Theorem~\ref{thm:: uniform error bound} that holds for a general approximation scheme.

\begin{theorem}\label{thm:: error bound of LME approximates}
Under the assumptions of Proposition~\ref{prop:: LME-shape functions have poly-decay}, let $u \in C^2(\bar{\Omega})$. Then, there exists a constant $C >0$, that depends on $\gamma$, $\tau$, $\varepsilon$, and $d$ only, such that
\begin{equation}
    \left|D^{\alpha} u_I({x}) - D^{\alpha} u({x}) \right|
    \leq C \|D^2 u\|_{\infty} h^{2 - |\alpha|},
\end{equation}
for ${x} \in \Omega_{\varepsilon h}$, $|\alpha| \leq 1$.
\end{theorem}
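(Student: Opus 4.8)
The plan is to obtain this statement as a direct specialization of Theorem~\ref{thm:: uniform error bound}, the general uniform interpolation error bound, with the domain on which the estimate is asserted taken to be $\Omega_{\varepsilon h}$ rather than all of $\Omega$. Every analytic ingredient has already been assembled: the gradient estimate \eqref{eq:DwBound}, the two-sided control of the partition function in Lemma~\ref{lemma:: LB and UB of Z}, the bound on $\|J^*(x)^{-1}\|$, and above all the polynomial decay of the LME shape functions established in Proposition~\ref{prop:: LME-shape functions have poly-decay}. The remaining work is therefore to match the hypotheses of the general theorem to the LME setting and to read off the conclusion.

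First I would fix the parameters. The LME scheme is a convex approximation scheme, hence first-order consistent, so $n = 1$; since $u \in C^2(\bar{\Omega})$ we have $\ell = 1$, and consequently $m = \min\{n, \ell\} = 1$. Hypothesis ii) of Theorem~\ref{thm:: uniform error bound}, the $h$-density bound, is exactly the assumption on $P$ inherited from Proposition~\ref{prop:: LME-shape functions have poly-decay}. For hypothesis iii) I need polynomial decay of some order $(r, s)$ with $2s > d + m + 1 = d + 2$. Here Proposition~\ref{prop:: LME-shape functions have poly-decay} does the decisive work: it supplies, on $\Omega_{\varepsilon h}$, polynomial decay of order $(1, s)$ for \emph{every} $s \geq 1$. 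I would therefore simply choose any $s > (d+2)/2$, admissible since $(d+2)/2 \geq 1$, which yields $2s > d + 2$ and fixes $r = 1$, so that $\min\{m, r\} = 1$ and the range of admissible multiindices is $|\alpha| \leq 1$.

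With these choices, applying Theorem~\ref{thm:: uniform error bound} gives
\[
    \left|D^{\alpha} u_I(x) - D^{\alpha} u(x)\right| \leq C \|D^{m+1} u\|_{\infty}\, h^{m+1-|\alpha|} = C \|D^2 u\|_{\infty}\, h^{2 - |\alpha|},
\]
which is the asserted inequality. Two points warrant explicit care. The first is the domain: polynomial decay holds only on $\Omega_{\varepsilon h}$, since the bound on $|\lambda^*(x)|$, and hence on the shape functions, degrades near $\partial\,\overline{\conv}(P)$, as Example~\ref{example:: unstable lambda} illustrates. This is unproblematic because the proof of Theorem~\ref{thm:: uniform error bound} uses the decay estimate \eqref{def:: eq::poly decay} only pointwise at the evaluation point $x$; since that estimate is valid at each $x \in \Omega_{\varepsilon h}$, the error bound follows at each such $x$, and the conclusion is correspondingly restricted to $\Omega_{\varepsilon h}$. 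The regularity $u \in C^2(\bar{\Omega})$ furnishes the smoothness needed for the multipoint Taylor formula, since the segments joining $x \in \Omega_{\varepsilon h}$ to the nodes $x_a$ lie in the convex set $\bar{\Omega}$. The second point is the constant: Theorem~\ref{thm:: uniform error bound} produces a $C$ depending on $\tau$, $d$, the decay constant, and $s$; since the decay constant from Proposition~\ref{prop:: LME-shape functions have poly-decay} depends only on $d$, $\gamma$, $\tau$, $\varepsilon$, $s$, and $s$ itself was chosen as a function of $d$ alone, $C$ depends only on $\gamma$, $\tau$, $\varepsilon$, and $d$, as claimed. I do not expect a genuine obstacle; the only subtlety is precisely this interplay between the boundary degeneracy and the restriction to $\Omega_{\varepsilon h}$.
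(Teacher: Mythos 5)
Your proposal is correct and follows exactly the paper's route: the paper's proof is the one-line observation that the theorem follows from Proposition~\ref{prop:: LME-shape functions have poly-decay} and Theorem~\ref{thm:: uniform error bound}, and you have simply spelled out the parameter matching ($n=\ell=m=1$, $r=1$, any $s>(d+2)/2$), the restriction to $\Omega_{\varepsilon h}$, and the tracing of the constants. No gaps.
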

\begin{proof}
The theorem follows from Proposition~\ref{prop:: LME-shape functions have poly-decay} and Theorem~\ref{thm:: uniform error bound}.
\end{proof}

Finally, we are in a position to show that LME approximation spaces on a domain $\Omega^{\prime}$ are dense in $W^{1,p}(\Omega)$ for subdomains $\Omega \subset \Omega^{\prime}$ which are compactly contained in $\Omega^{\prime}$. This result is derived from the polynomial decay of LME schemes, and the density of approximation schemes of Corollary~\ref{coro:: density in Sobolev space}.

\begin{corollary}\label{cor:: H1 density}
Let $\Omega$ be a domain satisfying the segment condition, and let $\Omega^{\prime}$ be an auxiliary domain such that $\overline{\Omega} \subset \Omega^{\prime}$. Let $\{I_k,W_k,P_k\}$ be a sequence of LME approximation schemes in $\Omega^{\prime}$. Suppose that the assumptions of Proposition~\ref{prop:: LME-shape functions have poly-decay} hold for $\{I_k,W_k,P_k\}$ in $\Omega^{\prime}$ uniformly for $h_k\to 0$. Let $ 1\leq p < \infty$. Then, for every $u \in W^{1,p}(\Omega)$ there exists a sequence $u_k \in X_k$ such that $u_k{}_{|\Omega} \to u$.
\end{corollary}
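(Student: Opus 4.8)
The plan is to follow the smoothing-and-diagonalization scheme already used in Corollary~\ref{coro:: density in Sobolev space}, the one genuinely new ingredient being a geometric argument that confines all of $\overline{\Omega}$ to the interior region $\Omega'_{\varepsilon h_k} = \{x : \bar{B}(x,\varepsilon h_k)\subset\overline{\conv}(P_k)\}$ on which the LME estimates of Proposition~\ref{prop:: LME-shape functions have poly-decay} and Theorem~\ref{thm:: error bound of LME approximates} are valid. Two obstructions must be overcome: the LME approximant of a function uses its nodal values at \emph{all} points of $P_k\subset\Omega'$, whereas a $W^{1,p}(\Omega)$ function lives only on the smaller set $\Omega$; and the LME error bounds degenerate near $\partial\Omega'$, as Example~\ref{example:: unstable lambda} shows. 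Both are resolved by exploiting the compact containment $\overline{\Omega}\subset\Omega'$.

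First I would record the geometric step. Since $\overline{\Omega}$ is compact and lies in the open set $\Omega'$, the distance $\delta := \operatorname{dist}(\overline{\Omega},\partial\Omega')$ is strictly positive. Because each $P_k$ is an $h_k$-covering of $\Omega'$, every point of $\Omega'$ lies in a $d$-simplex with vertices in $P_k$, so $\Omega'\subset\overline{\conv}(P_k)$ and hence $\overline{\Omega'}\subset\overline{\conv}(P_k)$. Consequently, for $x\in\overline{\Omega}$ and any $k$ with $\varepsilon h_k<\delta$ we have $\bar{B}(x,\varepsilon h_k)\subset B(x,\delta)\subset\Omega'\subset\overline{\conv}(P_k)$, that is $x\in\Omega'_{\varepsilon h_k}$. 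Thus $\overline{\Omega}\subset\Omega'_{\varepsilon h_k}$ for all sufficiently large $k$, and on this region the estimate of Theorem~\ref{thm:: error bound of LME approximates} holds with a constant $C$ depending only on $\gamma,\tau,\varepsilon,d$, hence uniform in $k$ by the standing hypothesis.

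Next I would pass to globally smooth data. By the segment condition and the density result cited just before Corollary~\ref{coro:: density in Sobolev space}, I choose $v_i\in C^\infty_c(\mathbb{R}^d)$ whose restrictions converge to $u$ in $W^{1,p}(\Omega)$. Each $v_i$ is defined on all of $\mathbb{R}^d$, so the LME approximant $(v_i)_k(x)=\sum_{a\in I_k} v_i(x_a)\,w^{(k)}_a(x)\in X_k$ is well defined, and for $k$ large (so that $\overline{\Omega}\subset\Omega'_{\varepsilon h_k}$) Theorem~\ref{thm:: error bound of LME approximates} gives, for $|\alpha|\le 1$ and all $x\in\Omega$,
\[
    \bigl|D^\alpha (v_i)_k(x)-D^\alpha v_i(x)\bigr|\le C\,\|D^2 v_i\|_\infty\,h_k^{2-|\alpha|}.
\]
Raising to the $p$-th power, summing over $|\alpha|\le 1$ and integrating over the bounded set $\Omega$ (finite measure) yields $\|(v_i)_k - v_i\|_{W^{1,p}(\Omega)}\le C'\,\|D^2 v_i\|_\infty\,h_k\to 0$ as $k\to\infty$, for each fixed $i$, the dominant term being the one from $|\alpha|=1$.

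Finally, a diagonal argument closes the proof exactly as in Corollary~\ref{coro:: density in Sobolev space}: since $(v_i)_k{}_{|\Omega}\to v_i{}_{|\Omega}$ in $W^{1,p}(\Omega)$ as $k\to\infty$ for each $i$, while $v_i{}_{|\Omega}\to u$ as $i\to\infty$, one selects an increasing index map $k\mapsto i(k)\to\infty$ and sets $u_k:=(v_{i(k)})_k\in X_k$, so that the triangle inequality forces $u_k{}_{|\Omega}\to u$. I expect the main obstacle to be the geometric step: one must verify that compact containment pushes $\overline{\Omega}$ into the well-behaved interior $\Omega'_{\varepsilon h_k}$ for all large $k$, thereby sidestepping the boundary singularity of $\lambda^*$ documented in Example~\ref{example:: unstable lambda}; the remaining steps are routine once the estimates of Section~\ref{sec:: convergence analysis} are available on a neighborhood of $\overline{\Omega}$.
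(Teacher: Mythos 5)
Your proposal is correct and follows essentially the same route as the paper: the key step in both is that the compact containment $\overline{\Omega}\subset\Omega'$ provides a positive buffer distance so that, for $h_k$ small, all of $\Omega$ lies in the interior region where Proposition~\ref{prop:: LME-shape functions have poly-decay} gives uniform polynomial decay, after which the general machinery applies. The paper simply cites Corollary~\ref{coro:: density in Sobolev space} at that point, whereas you unfold it (density of $C^\infty_c(\mathbb{R}^d)$ restrictions, the uniform error bound, and the diagonal argument) explicitly; the content is the same.
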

\begin{proof}
As $\overline{\Omega} \subset \Omega^{\prime}$, there exists $r>0$ such that, $\cup_{x \in \Omega} B(x, r) = \{x \in \R^d\colon {\rm dist}(x, \Omega) < r\} \subset \Omega^{\prime}$. The sequence of approximation schemes $\{I_k,W_k,P_k\}$, when restricted to $\Omega$, has uniform polynomial decay $(1,s)$ for any fixed $s$. Then, the theorem follows from Corollary~\ref{coro:: density in Sobolev space}.
\end{proof}
Corollary~\ref{cor:: H1 density} guarantees the density of the LME
approximates on $W^{1,p}(\Omega)$, provided that the sequence of LME
approximation schemes $\{I_k,W_k,P_k\}$ is defined on a bigger
domain $\Omega^{\prime}$. We note that, in this case, the LME scheme
does not obey the weak Kronecker-delta property at the boundary of
$\Omega$, making it less straightforward to enforce boundary
conditions on $\Omega$. However, imposing boundary conditions can be
done in this case by using standard Lagrangian multipliers, see
e.~g.~\cite{meshfree_review, cottrell_06}.
\end{section}

\begin{section}{Application to the Local Maximum-Entropy (LME) Approximation Scheme: Estimates up to the Boundary} \label{sec:: boundary-density}

In Section~\ref{sec:: convergence analysis} we have seen that, for a
sequence $\{I_k, W_k, P_k\}$ of LME approximation schemes, we have
density of the approximation space $X_k$ in $W^{1, p}_{\rm
loc}(\Omega)$. In order to treat boundary value problems, however,
we need density results up to the boundary of $\Omega$. A way to
guarantee the density in $W^{1,p}(\Omega)$ is to work with a
sequence $\{I_k, W_k, P_k\}$ defined on a (strictly) bigger domain
$\Omega^{\prime}$, as discussed in Corollary~\ref{cor:: H1 density}.
In this section, we analyze the density of the approximation space
$X_k$ when the domain of the LME scheme is $\Omega$.

While we will see that density can be extended to $W^{1,
p}_0(\Omega)$ in general, a major technical difficulty with
estimates up to the boundary comes from the fact that $\lambda^*(x)$
blows up as $x$ approaches $\partial \Omega$. This blowup is indeed
a manifestation of the weak Kronecker-delta property at the
boundary, as $\lambda^*$ will blow up in such a way that in the
limit no weight is given to nodal data in the interior of $\Omega$.
For general $\Omega$, this behavior can become very complicated and
lead to blow up of the gradients of the optimal shape functions
$\nabla w_a$, with the result that the general convergence scheme of
Section~\ref{sec:: general convergence analysis} is no longer
applicable. Therefore, for simplicity we restrict attention to the
class of polyhedral domains. Under generic assumptions, we shall
obtain sufficiently strong estimates on $\nabla w^*$ near flat
pieces of the boundary $\partial \Omega$ permitting to show that,
away from a small singular part of the boundary, Sobolev functions
can be approximated by linear combinations of shape functions in the
limit of $h \to 0$. The singular boundary is of finite $2$-capacity.
With the help of a capacity argument we can then establish
approximation results with truncated LME functions in $H^1(\Omega)$
for spatial dimension $d > 2$.

More precisely, in this section we will assume that $\Omega$ is a
convex polytope in $\R^d$, $P$ is an $h$-covering for $\Omega$ with
$\operatorname{conv} P = \Omega$ such that there exists a constant
$\eta > 0$ such that $\{x \in P \colon 0 < {\rm dist}(x, \partial
\Omega) < \eta h \} = \emptyset$. Note that then $P \cap \partial
\Omega$ is an $h$-covering for $\partial \Omega$.

Assume that $A = H \cap \partial \Omega$, $H$ some hyperplane, is a
flat $(d-1)$-dimensional subset of the boundary of $\Omega$. With
the aim to control $\nabla w^*(x)$ for $x$ in the vicinity of $A$,
our first task will be to exactly estimate the behavior of $J^*(x)$
in this regime. First note that with a proper choice of the
coordinate system we may assume that $H = \{x_1 = 0\} = \{0\} \times
\R^{d - 1}$ with $\Omega \cap \{x_1 \ge 0\} = \Omega$. Accordingly,
we write
\begin{equation}
    \lambda^*(x)
    =
    (\lambda^*_1(x), \lambda'(x)) \in \R \times \R^{d - 1}
\end{equation}
and, for $x = (x_1, x')$,
\begin{align}\label{eq:Z}
    Z
    =
    \sum_{a \in I} e^{-\beta|x - x_a|^2 + \langle \lambda', x' - x_a' \rangle + (x - x_a)_1 \lambda^*_1}.
\end{align}

We fix $\delta > 0$ and consider points $x \in \Omega$ with $x =
(\rho, x') \in \R \times \R^{d - 1}$ for $\rho$ small such that
$B_{\delta h}(0,x') \cap H = B_{\delta h}(0,x') \cap A$. In the
following lemmas we will also set $h = 1$ for arbitrarily large
$\Omega$ and recover the general case in
Proposition~\ref{prop:blow-up-est} by rescaling afterwards. Generic
positive constants, denoted $c$, $c'$, $c''$ or $C$, $C'$, will be
independent of $\rho$ and the size of $\Omega$.

\begin{lemma}\label{lemma:lambda-t-bound}
There is a constant $C > 0$ such that
\begin{equation}
    |\lambda'(x)|
    \le
    C \quad
    \text{and} \quad
    \lambda^*_1(x)
    \ge
    -C.
\end{equation}
\end{lemma}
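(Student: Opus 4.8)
### Proof Proposal

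The plan is to bound the two components of $\lambda^*(x)$ separately, exploiting the optimality condition $r(x,\lambda^*(x)) = 0$ together with the geometric setup. Recall that $\lambda^*(x)$ minimizes $\log Z(x,\lambda)$, and since $h = 1$ here, the point $x = (\rho, x')$ sits at distance $\rho$ from the flat boundary piece $A \subset H = \{x_1 = 0\}$, with $\Omega$ lying in the half-space $\{x_1 \ge 0\}$. The key structural fact I would use is that nodes $x_a \in P$ all satisfy $(x_a)_1 \ge 0$, so the first coordinates $(x - x_a)_1 = \rho - (x_a)_1$ are bounded above by $\rho$ and can be negative. This asymmetry is exactly what forces the one-sided bound on $\lambda^*_1$ rather than a two-sided bound.

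For the tangential part $\lambda'(x)$, I would argue as in Lemma~\ref{lemma:: UB lambda}. In the tangential directions the geometry is symmetric: because $B_{\delta h}(0,x') \cap H = B_{\delta h}(0,x') \cap A$, the point $x$ has nodes of the $h$-covering spread around it in every tangential direction within a fixed distance. So for any unit vector $e \in \{0\} \times \R^{d-1}$, applying Lemma~\ref{lemma:: x-lambda} (with $\varepsilon$ a fixed constant determined by $\delta$) produces a node $x_{\lambda}$ at bounded distance from $x$ with $\langle \lambda', x' - x_a' \rangle$ growing linearly in $|\lambda'|$. Combined with the uniform upper bound $Z(x,\lambda^*(x)) \le Z(x,0) \le c_Z$ from Lemma~\ref{lemma:: UB Z(0)}, this forces $|\lambda'(x)| \le C$. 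This part should be essentially a rerun of the earlier lemma, restricted to tangential directions where the covering property survives near the boundary.

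The one-sided bound $\lambda^*_1(x) \ge -C$ is the part I expect to require the real idea and to be the main obstacle. Here I cannot use Lemma~\ref{lemma:: x-lambda} to produce a node in the $-e_1$ direction, since moving toward decreasing $x_1$ exits $\Omega$ immediately when $\rho$ is small. Instead, I would use the optimality condition on the first component, namely $r_1(x,\lambda^*(x)) = \sum_a w^*_a(x)(x - x_a)_1 = 0$, which says the weighted average of $(x-x_a)_1 = \rho - (x_a)_1$ vanishes. Since $P$ is an $h$-covering with the gap condition $\{x \in P : 0 < \operatorname{dist}(x,\partial\Omega) < \eta h\} = \emptyset$, there are nodes on $A$ itself (with $(x_a)_1 = 0$, giving $(x-x_a)_1 = \rho > 0$) at bounded tangential distance from $x$. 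Suppose for contradiction that $\lambda^*_1$ were very negative; then the exponential weight $e^{(x-x_a)_1 \lambda^*_1}$ would heavily favor nodes with large positive $(x-x_a)_1$, i.e.\ boundary nodes, making the weighted average of $(x-x_a)_1$ strictly positive and violating $r_1 = 0$. Quantitatively, I would combine the lower bound $m_Z$ on $Z$ (Lemma~\ref{lemma:: LB and UB of Z}, adapted to this boundary regime using the bound on $\lambda'$ just obtained) with the polynomial node count of Proposition~\ref{prop:: UB number node points on rings} to show that if $\lambda^*_1 < -C$ for $C$ large, the contribution of boundary nodes to $r_1$ dominates and cannot be cancelled by the interior nodes (whose $(x-x_a)_1$ values are at most $\rho$ in magnitude on the positive side and bounded on the negative side). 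The delicate point is ensuring the constants stay uniform in $\rho$ as $\rho \to 0$, which is precisely why the existence of nodes exactly on $A$ at controlled distance — guaranteed by the covering-plus-gap hypothesis — is indispensable.
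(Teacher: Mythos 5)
Your treatment of the tangential bound $|\lambda'(x)| \le C$ is on target and is exactly what the paper intends: its entire proof is the remark that the result ``follows along the same lines as the proof of Lemma~\ref{lemma:: UB lambda}'', i.e.\ one reruns the comparison $Z(x,\lambda)\ge e^{-c+\varepsilon|\lambda|}$ versus $Z(x,\lambda^*(x))\le Z(x,0)\le c_Z$, producing the aligned node from the $h$-covering of the flat face $A$ in the tangential directions.

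Your second paragraph, however, gets the geometry of the bound $\lambda^*_1(x)\ge -C$ backwards, and this is where the proposal breaks. To rule out $\lambda_1$ very negative one needs a node $x_a$ with $\langle\lambda, x-x_a\rangle \gtrsim |\lambda|$, i.e.\ with $(x-x_a)_1\le -\varepsilon$, i.e.\ $(x_a)_1\ge\rho+\varepsilon$: a node \emph{deeper inside} $\Omega$. The construction of Lemma~\ref{lemma:: x-lambda} produces it by moving from $x$ in the direction $-\lambda/|\lambda|\approx +e_1$, that is \emph{into} the domain, so the covering argument is perfectly available here (its proof only uses that the single point $\tilde x = x - \varepsilon\lambda/|\lambda|$ lies in $\overline{\conv}(P)$, not the whole ball $\bar B(x,\varepsilon)$). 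The direction that genuinely fails is $+e_1$, which is why $\lambda^*_1$ admits no upper bound and indeed tends to $+\infty$. Your substitute argument via $r_1=0$ also has the weights reversed: with $\lambda^*_1\ll 0$ the factor $e^{\lambda^*_1 (x-x_a)_1}$ is largest when $(x-x_a)_1=\rho-(x_a)_1$ is most negative, so the mass concentrates on deep interior nodes and $r_1$ is driven strictly \emph{negative}, not positive; the scenario you describe (mass on boundary nodes, $r_1>0$) is what happens as $\lambda^*_1\to+\infty$, and there it is \emph{not} a contradiction because the positive contributions are only $O(\rho)$ --- that balance is precisely eq.~\eqref{eq:opt-cond} and the content of the lemma that follows this one. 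The contradiction you seek could be repaired by flipping the signs, but as written the mechanism is wrong, and the part you flag as ``the main obstacle'' is in fact the easy one-line adaptation of Lemmas~\ref{lemma:: x-lambda} and~\ref{lemma:: UB lambda} that the paper has in mind.
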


\begin{proof}
This result follows along the same lines as the proof of
Lemma~\ref{lemma:: UB lambda} for the boundedness of $\lambda^*$ in
the interior of $\Omega$.
\end{proof}

In order to investigate $Z$ we split the sum as
\begin{align}\label{eq:Z-split}
    Z
    =
    \sum_{a \in I_A} e^{-\beta|x - x_a|^2 + \langle \lambda', x' - x_a' \rangle + \rho \lambda^*_1}
    + \sum_{a \in I \setminus I_A} e^{-\beta|x - x_a|^2 + \langle \lambda', x' - x_a' \rangle - (x_a - x)_1 \lambda^*_1},
\end{align}
where $I_A$ collects those indices $a$ for which $x_a \in A$.

\begin{lemma}
As $\rho$ tends to $0$, $\lambda^*_1 \to \infty$ such that $\rho \lambda^*_1 \to 0$.
\end{lemma}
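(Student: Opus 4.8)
The plan is to extract everything from the single scalar optimality condition that pins down $\lambda^*_1$, namely the vanishing of the first component of the gradient $r(x,\lambda^*)=\sum_{a\in I}w_a^*(x)\,(x-x_a)=0$. Writing $x=(\rho,x')$, using $(x_a)_1=0$ for $a\in I_A$, and combining with the zeroth–order consistency $\sum_{a}w_a^*(x)=1$, this identity collapses to the scalar relation
\begin{equation}\label{eq:rho-identity}
    \rho=\sum_{a\in I\setminus I_A} w_a^*(x)\,(x_a)_1 .
\end{equation}
Since $\conv P=\Omega$ is $d$-dimensional, $I\setminus I_A$ is nonempty and each such node has $(x_a)_1>0$; relation \eqref{eq:rho-identity} is the engine of both assertions.

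First I would prove $\lambda^*_1\to+\infty$ by contradiction. Lemma~\ref{lemma:lambda-t-bound} already gives $|\lambda'|\le C$ and $\lambda^*_1\ge -C$, so if $\lambda^*_1$ did not diverge there would be a sequence $\rho\to0$ along which $\lambda^*(x)$ stays in a compact subset of $\R^d$; passing to a subsequence, $\lambda^*(x)\to\bar\lambda$ and $x\to(0,x')\in A$. The exponents in \eqref{eq:: optimal probas} are then bounded and the partition function is a finite positive sum, so every $w_a^*(x)$ converges to a strictly positive limit weight $\bar w_a>0$. Fixing any node $c\in I\setminus I_A$ and letting $\rho\to0$ in \eqref{eq:rho-identity} would force $0=\lim\rho\ge\bar w_c\,(x_c)_1>0$, a contradiction. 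Hence $\lambda^*_1$ is unbounded, and being bounded below it tends to $+\infty$.

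For the second assertion, $\rho\lambda^*_1\to0$, I would use a lower bound on $Z$ furnished by the boundary nodes. In the splitting \eqref{eq:Z-split} the $I_A$-block factors as $e^{\rho\lambda^*_1}Z_A$ with $Z_A=\sum_{a\in I_A}e^{-\beta|x-x_a|^2+\langle\lambda',x'-x_a'\rangle}$; since $P\cap\partial\Omega$ is an $h$-covering of $\partial\Omega$ there is a node of $A$ within $O(1)$ of $(0,x')$, so with $\lambda'$ bounded one gets $Z_A\ge m_A>0$ uniformly in $\rho$, and therefore $Z\ge m_A\,e^{\rho\lambda^*_1}$. Dividing the numerator of $w_a^*$ for $a\in I\setminus I_A$ by this bound cancels the factor $e^{\rho\lambda^*_1}$ and leaves the uniform estimate
\begin{equation}\label{eq:wa-bound}
    w_a^*(x)\le b_a\,e^{-(x_a)_1\lambda^*_1},\qquad
    b_a=\tfrac{1}{m_A}\,e^{-\beta|x-x_a|^2+\langle\lambda',x'-x_a'\rangle},
\end{equation}
each $b_a$ being bounded uniformly as $\rho\to0$. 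Multiplying \eqref{eq:rho-identity} by $\lambda^*_1$ and inserting \eqref{eq:wa-bound} yields
\begin{equation}
    0\le\rho\lambda^*_1=\sum_{a\in I\setminus I_A}\lambda^*_1(x_a)_1\,w_a^*(x)
    \le\sum_{a\in I\setminus I_A} b_a\,\big((x_a)_1\lambda^*_1\big)\,e^{-(x_a)_1\lambda^*_1}.
\end{equation}
As $P$ is finite and each $(x_a)_1>0$ is a fixed positive constant for $a\in I\setminus I_A$, every summand tends to $0$ once $\lambda^*_1\to\infty$ (via $t\,e^{-t}\to0$), so the finite sum tends to $0$.

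The main obstacle is the first assertion, $\lambda^*_1\to\infty$, since it is the one place where the convex/weak–Kronecker structure must be turned into an analytic statement: one must rule out that interior nodes keep nonnegligible weight, which is exactly the compactness-plus-positivity argument above. The qualitative contradiction needs only the existence of an off-$A$ node with positive limiting weight, but it is the gap condition $\{x\in P\colon 0<{\rm dist}(x,\partial\Omega)<\eta h\}=\emptyset$ — which forces nearby off-$A$ nodes to satisfy $(x_a)_1\ge\eta$ — that upgrades this to the quantitative, $\rho$-uniform control needed when rescaling to $h\neq1$ in Proposition~\ref{prop:blow-up-est}. Everything following the divergence of $\lambda^*_1$ is the essentially algebraic cancellation of $e^{\rho\lambda^*_1}$ between numerator and denominator.
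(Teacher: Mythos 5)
Your proof is correct, but it runs on a different engine than the paper's. The paper argues at the level of the \emph{value} of the partition function: since $\lambda^*$ minimizes $Z$, and since the split \eqref{eq:Z-split} shows $Z(\lambda_1) = \mathrm{e}^{\rho\lambda_1}Z_A + (\text{off-}A\text{ terms decaying in }\lambda_1)$, the optimal value can approach the lower bound $Z_A=\sum_{a\in I_A}\mathrm{e}^{-\beta|x-x_a|^2+\langle\lambda',x'-x_a'\rangle}$ as $\rho\to0$ only if $\lambda^*_1\to\infty$ (to kill the interior contribution) and $\rho\lambda^*_1\to0$ (to keep the boundary factor from inflating); this is a zeroth-order comparison of $Z(\lambda^*)$ against test values of $\lambda_1$. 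You instead work from the \emph{first-order} optimality condition $\partial Z/\partial\lambda_1=0$, i.e.\ the scalar identity $\rho=\sum_{a\in I\setminus I_A}w^*_a(x)\,(x_a)_1$ — which is precisely \eqref{eq:opt-cond} divided by $Z$, a tool the paper only introduces \emph{after} this lemma for the bound on $J^*_{11}$. From it you get $\lambda^*_1\to\infty$ by a compactness-plus-positivity contradiction (bounded $\lambda^*$ would leave every interior node with positive limiting weight, contradicting $\rho\to0$), and $\rho\lambda^*_1\to0$ by cancelling $\mathrm{e}^{\rho\lambda^*_1}$ against the boundary-node lower bound on $Z$ and invoking $t\,\mathrm{e}^{-t}\to0$. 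Both routes rest on the same structural facts (the split \eqref{eq:Z-split}, $(x_a)_1=0$ on $A$ and $(x_a)_1>0$ off $A$, and Lemma~\ref{lemma:lambda-t-bound}), but yours is more explicit and self-contained than the paper's rather terse ``this lower bound is in fact achieved only if\dots'', at the price of a subsequence argument that is qualitative in the size of the point set; the paper's value-comparison version is the one that more directly survives the later uniform rescaling. Your closing remark correctly identifies the gap condition $(x_a)_1\ge\eta$ for off-$A$ nodes as what upgrades the qualitative statement to the quantitative control needed in Proposition~\ref{prop:blow-up-est}.
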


\begin{proof}
Writing $Z$ as in \eqref{eq:Z-split} and noting that $(x_a - x)_1 > \eta$ for $a \notin I_A$ and $\rho > 0$, we see that
\begin{equation}
    Z
    \ge
    \sum_{a \in I_A} e^{-\beta|x - x_a|^2 + \langle \lambda', x' - x_a' \rangle}
\end{equation}
and that this lower bound is in fact achieved only if $\rho \lambda^*_1 \to 0$ and $\lambda^*_1 \to \infty$.
\end{proof}
In particular, we see that $Z$ still remains bounded from above and from below by positive constants.

In order to estimate $J^*$, we first observe that the optimality condition $\frac{\partial Z}{\partial \lambda_1} = 0$ implies
\begin{align}\label{eq:opt-cond}
    \rho \sum_{a \in I_A} e^{-\beta|x - x_a|^2
    \langle \lambda', x' - x_a' \rangle + \rho \lambda^*_1}
    =
    \sum_{a \in I \setminus I_A}
    e^{-\beta|x - x_a|^2 + \langle \lambda', x' - x_a' \rangle - (x_a - x)_1 \lambda^*_1}
    (x_a - x)_1.
\end{align}

\begin{lemma}\label{lemma:J11-lower-bound}
There is a constant $c > 0$ such that the first entry $J^*_{11}$ in $J^*(x)$ satisfies
\begin{equation}
    J^*_{11}
    \ge
    c \rho.
\end{equation}
\end{lemma}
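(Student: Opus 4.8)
The plan is to read off the $(1,1)$ entry of the Hessian from \eqref{eq:: def J*} and then use the optimality condition \eqref{eq:opt-cond} to manufacture the linear factor of $\rho$. Writing $E_a = e^{-\beta|x - x_a|^2 + \langle \lambda', x' - x_a'\rangle + (x - x_a)_1 \lambda^*_1}$ so that $w^*_a = E_a / Z$, formula \eqref{eq:: def J*} gives
\[
    J^*_{11}(x) = \frac{1}{Z} \sum_{a \in I} E_a\, (x - x_a)_1^2 .
\]
Since $(x - x_a)_1 = \rho$ for $a \in I_A$ and $(x - x_a)_1 = \rho - (x_a)_1$ for $a \in I \setminus I_A$, and since every summand is nonnegative, I would first discard the indices in $I_A$ and keep only
\[
    J^*_{11}(x) \ge \frac{1}{Z} \sum_{a \in I \setminus I_A} E_a\, \big((x_a)_1 - \rho\big)^2 .
\]

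Next I would bound $((x_a)_1 - \rho)^2$ from below by a multiple of $(x_a)_1$, so that the optimality condition can be applied. The geometric input is that a node $x_a$ with $a \in I \setminus I_A$ and non-negligible weight satisfies $(x_a)_1 \ge \eta$ (recall $h = 1$): an interior node has $\operatorname{dist}(x_a, \partial \Omega) \ge \eta$, and since $\Omega \subset \{x_1 \ge 0\}$ with $A$ the facet lying in $\{x_1 = 0\}$, convexity forces $\operatorname{dist}(x_a, \partial \Omega) \le (x_a)_1$, whence $(x_a)_1 \ge \eta$. Granting this, for $\rho \le \eta/2$ one has $(x_a)_1 - \rho \ge (x_a)_1 / 2$ and therefore $((x_a)_1 - \rho)^2 \ge \tfrac14 (x_a)_1^2 \ge \tfrac{\eta}{4} (x_a)_1$. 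Substituting and then invoking \eqref{eq:opt-cond}, which after division by $Z$ states exactly that $\frac{1}{Z} \sum_{a \in I \setminus I_A} E_a\, (x_a)_1 = \rho$, I would conclude
\[
    J^*_{11}(x) \ge \frac{\eta}{4}\, \frac{1}{Z} \sum_{a \in I \setminus I_A} E_a\, (x_a)_1 = \frac{\eta}{4}\, \rho ,
\]
which is the assertion with $c = \eta / 4$.

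The hard part is the geometric/decay step, i.e.\ controlling those $a \in I \setminus I_A$ for which $(x_a)_1 < \eta$. These can only be nodes lying on facets of $\partial \Omega$ other than $A$; by the local flatness assumption $B_{\delta h}(0,x') \cap H = B_{\delta h}(0,x') \cap A$ they are tangentially separated from $x$ by essentially $\delta$, so $E_a$ carries an exponential factor $e^{-\beta |x - x_a|^2} \le e^{-c \delta^2}$. I would split these nodes according to whether $(x_a)_1 > \rho$ or $(x_a)_1 \le \rho$. For the former, the factor $e^{-((x_a)_1 - \rho)\lambda^*_1}$ together with $\lambda^*_1 \to \infty$ (from the preceding lemma, and bounded below by Lemma~\ref{lemma:lambda-t-bound}) suppresses their contribution to $o(\rho)$; for the latter, $(x_a)_1 \le \rho$ already makes their contribution to $\frac{1}{Z} \sum E_a (x_a)_1$ of order $\rho\, e^{-c\delta^2}$, which I would bound using the lower bound on $Z$ and the ring-counting estimate of Proposition~\ref{prop:: UB number node points on rings}. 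The upshot is that the exact identity $\frac{1}{Z} \sum_{I \setminus I_A} E_a (x_a)_1 = \rho$ is correct, while the restriction to nodes with $(x_a)_1 \ge \eta$ introduces an error that is itself $O(\rho)$ and hence, after choosing $\delta$ large and $\rho$ small, only reduces the constant $c$ without affecting the linear rate.
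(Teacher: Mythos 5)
Your main computation is exactly the paper's proof: discard the $I_A$ terms of $J^*_{11}=\sum_{a}w^*_a(x)(x-x_a)_1^2$, use $(x_a-x)_1\ge\eta$ for $a\notin I_A$ to lower one power of $(x_a-x)_1$ to a constant, and then apply \eqref{eq:opt-cond}, which converts the remaining sum into $\rho$ times $Z^{-1}\sum_{a\in I_A}E_a$ --- a quantity bounded below by a positive constant rather than exactly $1$, as your ``exact identity'' $Z^{-1}\sum_{a\in I\setminus I_A}E_a(x_a)_1=\rho$ would suggest, but this is harmless. The paper takes $(x_a-x)_1\ge\eta$ for \emph{every} $a\notin I_A$ directly from the standing hypothesis that no node lies at distance in $(0,\eta h)$ from $\partial\Omega$ (for interior nodes this does give $(x_a)_1\ge\eta$, since $\bar{B}(x_a,\eta)\subset\Omega\subset\{x_1\ge 0\}$), so your closing paragraph about nodes with $(x_a)_1<\eta$ concerns only boundary nodes on facets other than $A$ --- a case the paper passes over in silence --- and there your claim that the factor $e^{-((x_a)_1-\rho)\lambda^*_1}$ suppresses their contribution to $o(\rho)$ is not justified when $(x_a)_1-\rho$ is itself $O(\rho)$, because $\rho\lambda^*_1\to 0$ makes that exponential tend to $1$.
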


\begin{proof}
Since $(x_a - x)_1 \ge \eta$ for $a \notin I_A$ we find by eq.~\eqref{eq:opt-cond}
and Lemma~\ref{lemma:lambda-t-bound}
\begin{equation}
\begin{split}
    J^*_{11}
    &= \sum_{a \in I} w^*_a(x) (x - x_a)_1^2 \\
    &\ge Z^{-1} \sum_{a \in I \setminus I_A}
       e^{-\beta|x - x_a|^2 + \langle \lambda', x' - x_a' \rangle - (x_a - x)_1 \lambda^*_1}
       (x_a - x)_1^2 \\
    &\ge \eta  Z^{-1} \sum_{a \in I \setminus I_A}
       e^{-\beta|x - x_a|^2 + \langle \lambda', x' - x_a' \rangle - (x_a - x)_1 \lambda^*_1}
       (x_a - x)_1 \\
    &= \eta \rho Z^{-1} \sum_{a \in I_A}
       e^{-\beta|x - x_a|^2 + \langle \lambda', x' - x_a' \rangle + \rho \lambda^*_1} \\
    &\ge c\rho
\end{split}
\end{equation}
for some constant $c > 0$.
\end{proof}

We now derive an upper bound for the entries of the first row and column of $J^*$.

\begin{lemma}\label{lemma:J1j-upper-bound}
For any $0 < \mu < 1$ there exists a constant $C > 0$ such that
\begin{equation}
    |J^*_{1j}|
    =
    |J^*_{j1}| \le C \rho^{\mu}, \qquad j = 1, \ldots, d.
\end{equation}
\end{lemma}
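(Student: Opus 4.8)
The plan is to work directly from the representation
$J^*_{1j}(x) = \sum_{a \in I} w^*_a(x)\,(x - x_a)_1 (x - x_a)_j$
furnished by eq.~\eqref{eq:: def J*}; since $J^*$ is a sum of the symmetric rank-one matrices $(x-x_a)\otimes(x-x_a)$, it is symmetric and $J^*_{1j} = J^*_{j1}$, so it suffices to estimate this one expression. I would split the sum along the partition $I = I_A \cup (I \setminus I_A)$ of \eqref{eq:Z-split}. For $a \in I_A$ one has $(x_a)_1 = 0$, hence $(x - x_a)_1 = \rho$, and the $I_A$-part is $\rho \sum_{a \in I_A} w^*_a(x)(x - x_a)_j$, whose modulus is at most $\rho \sum_{a \in I} w^*_a(x)|x - x_a|$. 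Using $|\lambda'(x)| \le C$ from Lemma~\ref{lemma:lambda-t-bound}, $\rho\lambda^*_1 \to 0$, and $\lambda^*_1 > 0$ eventually (so the exterior factor $e^{-(x_a-x)_1\lambda^*_1}$ never exceeds $1$), each $w^*_a(x)$ is dominated by $m_Z^{-1} e^{-\gamma|x-x_a|^2 + C|x-x_a|}$; combined with the ring count of Proposition~\ref{prop:: UB number node points on rings} this renders every weighted moment $M_k := \sum_{a\in I} w^*_a(x)|x-x_a|^k$ finite and bounded uniformly in $\rho$. Thus the $I_A$-part is $O(\rho) \le C\rho^\mu$ for $\rho \le 1$ and $\mu < 1$.

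The substance is the exterior sum $S := \sum_{a \in I \setminus I_A} w^*_a(x)(x - x_a)_1 (x - x_a)_j$, which I bound by $\sum_{a \notin I_A} w^*_a(x)|x - x_a|^2$. The first key input is a bound on the \emph{total exterior mass}: dividing the optimality condition \eqref{eq:opt-cond} by $Z$ gives the identity $\rho \sum_{a \in I_A} w^*_a(x) = \sum_{a \notin I_A} w^*_a(x)(x_a - x)_1$, and since $(x_a - x)_1 \ge \eta - \rho \ge \eta/2$ for $a \notin I_A$ and $\rho \le \eta/2$ (by the $\eta h$-gap assumption, here with $h=1$), the left side being at most $\rho$ yields $W_{A^c} := \sum_{a \notin I_A} w^*_a(x) \le 2\rho/\eta$.

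The second step converts this mass bound into a bound on the second moment by interpolating against a high moment. I would apply H\"older's inequality with exponents $1/\theta$ and $1/(1-\theta)$, $\theta \in (0,1)$, to the positive measure $(w^*_a)_{a \notin I_A}$, obtaining
\begin{equation}
    \sum_{a \notin I_A} w^*_a(x)|x - x_a|^2
    \le
    W_{A^c}^{\,1-\theta}
    \Big( \sum_{a \notin I_A} w^*_a(x)|x - x_a|^{2/\theta} \Big)^{\theta}
    \le
    \big(2\rho/\eta\big)^{1-\theta} M_{2/\theta}^{\,\theta},
\end{equation}
where $M_{2/\theta}$ is finite and $\rho$-independent by the uniform moment bounds established above. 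Choosing $\theta = 1 - \mu$ gives $|S| \le C\rho^{\mu}$, completing the estimate. (The same conclusion can be reached by splitting the exterior sum at a radius $R \asymp \sqrt{\log(1/\rho)}$, balancing the mass bound against the Gaussian tail; the $\log$ factor appearing there explains why $\mu = 1$ is unattainable and only $\mu < 1$ is claimed.)

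I expect the main obstacle to be the uniform-in-$\rho$ control of the high moment $M_{2/\theta}$: one must verify that the exterior factor $e^{-(x_a-x)_1\lambda^*_1}$ never spoils the Gaussian decay of $w^*_a$, which hinges on $\lambda^*_1 > 0$ together with the boundedness of $\lambda'$ and of $Z$ from below, and then that the resulting Gaussian-weighted sum over $P$ is summable via the ring count of Proposition~\ref{prop:: UB number node points on rings}. Once both the total-mass bound $W_{A^c} = O(\rho)$ and the uniform moment bounds are in hand, the H\"older interpolation is routine.
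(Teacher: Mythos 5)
Your proof is correct, and it rests on the same two pillars as the paper's: the split of the sum over $I_A$ versus $I\setminus I_A$, and the optimality condition \eqref{eq:opt-cond}, which is the only source of the small factor $\rho$. The difference lies in how H\"older's inequality is deployed on the exterior sum. The paper applies H\"older directly to the product $(x-x_a)_1(x-x_a)_j$ with exponents $p=1/\mu$ and $q$, which leaves it with the quantity $\sum_{a\notin I_A} w^*_a (x_a-x)_1^p$; reducing this to $\sum_{a\notin I_A} w^*_a (x_a-x)_1$ (and hence to $C\rho$ via \eqref{eq:opt-cond}) requires a separate, somewhat delicate argument for the nodes with $(x_a-x)_1$ large, comparing $e^{-C'\lambda^*_1}$ to the contribution of a nearby node $\bar a$. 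You instead first extract from \eqref{eq:opt-cond} and the gap condition $(x_a-x)_1\ge \eta/2$ the cleaner statement that the \emph{total exterior mass} satisfies $\sum_{a\notin I_A} w^*_a \le 2\rho/\eta$, and then interpolate this against a uniformly bounded high moment $M_{2/\theta}$ with $\theta = 1-\mu$. This bypasses the paper's large-$(x_a-x)_1$ case analysis entirely, at the price of having to justify the uniform-in-$\rho$ bound on $M_{2/\theta}$ --- which you do correctly: for $a\notin I_A$ the factor $e^{-(x_a-x)_1\lambda^*_1}$ is at most $1$ once $\lambda^*_1>0$, for $a\in I_A$ the factor $e^{\rho\lambda^*_1}$ stays bounded, $Z$ is bounded below, and the ring count of Proposition~\ref{prop:: UB number node points on rings} makes the Gaussian-weighted sums converge. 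Both routes give exactly the same conclusion and the same loss (any $\mu<1$ but not $\mu=1$); yours is arguably the tidier bookkeeping, and the isolated mass bound $\sum_{a\notin I_A}w^*_a = O(\rho)$ is a reusable intermediate fact that the paper never states explicitly.
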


\begin{proof}
For $j = 1, \ldots, d$ we have $J^*_{1j} = J^*_{j1} = \sum_{a \in I} w^*_a(x) (x - x_a)_1 (x - x_a)_j$. First summing over $a \in I_A$ gives the obvious bound \begin{align}\label{eq:first-summand-est}
\begin{split}
    \left| \sum_{a \in I_A} w^*_a(x) (x - x_a)_1 (x - x_a)_j \right|
    &\le Z^{-1} \sum_{a \in I_A} e^{-\beta|x - x_a|^2 + \langle \lambda', x' - x_a' \rangle + \rho \lambda^*_1} \rho |x - x_a| \\
    &\le C \rho.
\end{split}
\end{align}

In order to estimate the remaining sum, we let $p = \frac{1}{\mu}$ and choose $1 < q < \infty$ with $\frac{1}{p} + \frac{1}{q} = 1$, so that
\begin{align}\label{eq:Hoelder-Absch}
    \left| \sum_{a \in I \setminus I_A} w^*_a(x) (x - x_a)_1 (x - x_a)_j \right|
    &\leq \left( \sum_{a \in I \setminus I_A} w^*_a(x) (x_a - x)_1^p \right)^{\frac{1}{p}}
     \left( \sum_{a \in I \setminus I_A} w^*_a(x) |x - x_a|_j^q \right)^{\frac{1}{q}}
\end{align}
by H{\"o}lder's inequality. Here the second factor in \eqref{eq:Hoelder-Absch} is bounded by
\begin{align}
    \left( Z^{-1} \sum_{a \in I \setminus I_A}
    e^{-\beta|x - x_a|^2 + \langle \lambda', x' - x_a' \rangle - (x_a - x)_1 \lambda^*_1} |x - x_a|_j^q \right)^{\frac{1}{q}}
    &\le
    C,
\end{align}
see Lemma~\ref{lemma:lambda-t-bound}. To estimate the first factor we note that, since $P$ is a $1$-covering of $\Omega$, there exists $\bar{a} \in I \setminus I_A$ such that
\begin{equation*}
    |x_{\bar{a}} - x|
    \le
    C'
\end{equation*}
for a constant $C' > \eta$. For $\rho$ sufficiently small and thus $\lambda^*_1$ sufficiently large, we then have the estimate
\begin{equation}
\begin{split}
    \sum_{\substack{a \in I \setminus I_A \\ (x_a - x)_1 \ge C'}} w^*_a(x) (x_a - x)_1^p
    &\le
    Z^{-1} \sum_{\substack{a \in I \setminus I_A \\ (x_a - x)_1 \ge C'}}
    e^{-\beta|x - x_a|^2 + \langle \lambda', x' - x_a' \rangle - C' \lambda^*_1} (x_a - x)_1^p \\
    &\le C Z^{-1} e^{- C' \lambda^*_1} \\
    &\le C Z^{-1} e^{-\beta|x - x_{\bar{a}}|^2 + \langle \lambda', x' - x_{\bar{a}}' \rangle - (x_{\bar{a}} - x)_1 \lambda^*_1} \\
    &\le C Z^{-1} \eta^{-1} \sum_{a \in I \setminus I_A}
    e^{-\beta|x - x_a|^2 + \langle \lambda^*, x - x_a \rangle} (x_a - x)_1,
\end{split}
\end{equation}
as $(x_a - x)_1 \ge \eta$ for $a \in I \setminus I_A$. On the other hand, for $a$ with $(x_a - x)_1 \le C'$ we have the bound
\begin{equation}
    (x_a -x)_1^p
    \le
    C(x_a - x)_1.
\end{equation}
Combining the two last estimates, we see that the term in the first factor of \eqref{eq:Hoelder-Absch} satisfies
\begin{align}
    \sum_{a \in I \setminus I_A} w^*_a(x) (x - x_a)_1^p
    &\le C \sum_{a \in I \setminus I_A} w^*_a(x) (x - x_a)_1.
\end{align}
Since by \eqref{eq:opt-cond} this last expression is bounded by $C \rho$, we arrive at
\begin{align}
    \left| \sum_{a \in I \setminus I_A} w^*_a(x) (x - x_a)_1 (x - x_a)_j \right|
    \le C \rho^{\frac{1}{p}}
    = C \rho^{\mu}
\end{align}
by \eqref{eq:Hoelder-Absch}. Together with the bound~\eqref{eq:first-summand-est} for the first part of the sum we have shown that indeed
\begin{equation*}
    |J^*_{1i}|
    \le
    C \rho^{\mu}.
\end{equation*}
\end{proof}

For the remaining part $B = (J^*_{ij})_{2 \le i,j \le n}$ of the matrix $J^*$ we obtain the following lower matrix bound.

\begin{lemma}\label{lemma:B-lower-bound}
There is a constant $c > 0$ such that
\begin{equation}
    B
    \ge
    c \operatorname{Id}_{n-1} .
\end{equation}
\end{lemma}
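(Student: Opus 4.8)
The plan is to test the block $B$ against an arbitrary vector in the hyperplane directions and reduce the estimate to the one-dimensional alignment argument already used in the interior. Writing $u = (0, u') \in \R \times \R^{d-1}$ and using eq.~\eqref{eq:: def J*}, we have
\begin{equation}
    u'^{T} B u'
    =
    u^{T} J^*(x) u
    =
    \sum_{a \in I} w^*_a(x) \langle u, x - x_a \rangle^2
    =
    \sum_{a \in I} w^*_a(x) \langle u', x' - x_a' \rangle^2,
\end{equation}
so it suffices to bound this sum from below by $c |u'|^2$. By homogeneity we may assume $|u'| = 1$, and we shall retain only a single, carefully chosen boundary node in the sum.

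First I would locate a boundary node well aligned with $u'$, by transporting the argument of Lemma~\ref{lemma:: x-lambda} into the hyperplane $H \cong \R^{d-1}$. Since $P \cap A$ is a $1$-covering of the flat $(d-1)$-dimensional piece $A$ and the ball condition $B_{\delta}(0,x') \cap H = B_{\delta}(0,x') \cap A$ holds (recall $h = 1$ in this section), the point $\tilde{x}' = x' - \varepsilon u'$ lies in $A$ for $\varepsilon \le \delta$ and is contained in a $(d-1)$-simplex of size at most $1$ with vertices in $P \cap A$. As in Lemma~\ref{lemma:: x-lambda}, one vertex $x_a'$ of this simplex lies in the halfspace $\{z' \colon \langle u', \tilde{x}' - z' \rangle \ge 0\}$, which yields $\langle u', x' - x_a' \rangle \ge \varepsilon$ together with $|x' - x_a'| \le \varepsilon + 1$. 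For this index $a \in I_A$ we have $(x_a)_1 = 0$, hence $|x - x_a|^2 = \rho^2 + |x' - x_a'|^2 \le C$.

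The crux is then to show that the weight $w^*_a(x)$ of this boundary node does not degenerate as $\rho \to 0$. Using the representation
\begin{equation}
    w^*_a(x)
    =
    \frac{e^{-\beta |x - x_a|^2 + \langle \lambda', x' - x_a' \rangle + \rho \lambda^*_1}}{Z},
\end{equation}
I would bound each ingredient from the facts already established: $|x - x_a| \le C$, the estimate $|\lambda'(x)| \le C$ of Lemma~\ref{lemma:lambda-t-bound}, the property $\rho \lambda^*_1 \to 0$ (so that $\rho \lambda^*_1$ is bounded, and in fact nonnegative for $\rho$ small since $\lambda^*_1 \to \infty$), and the fact that $Z$ stays bounded above by a positive constant. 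These combine to give $w^*_a(x) \ge c_0 > 0$ uniformly in $\rho$. Retaining only this term then gives $u'^{T} B u' \ge w^*_a(x) \langle u', x' - x_a' \rangle^2 \ge c_0 \varepsilon^2 =: c > 0$, and homogeneity upgrades this to $B \ge c \operatorname{Id}_{n-1}$.

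I expect the main obstacle to be precisely this uniform lower bound on $w^*_a(x)$ for the boundary node: the boundary weights must not be swamped by the blow-up of $\lambda^*_1$. It is controlled by the already-proven balance $\rho \lambda^*_1 \to 0$, which guarantees that, although $\lambda^*_1 \to \infty$, the exponent contributed by a boundary node (for which $(x_a)_1 = 0$) stays bounded; the lower bound on $Z$ is likewise needed to keep the normalization under control.
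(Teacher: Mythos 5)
Your proof is correct and follows essentially the same strategy as the paper's: discard all but a few boundary nodes in $I_A$, whose weights $w^*_a(x)$ are bounded below uniformly in $\rho$ thanks to Lemma~\ref{lemma:lambda-t-bound}, the boundedness of $\rho\lambda^*_1$, and the two-sided bound on $Z$. The only difference is in how non-degeneracy of the retained rank-one projections is secured: the paper fixes once and for all $d-1$ nodes in $I_A$ satisfying a uniform determinant lower bound, whereas you pick, for each test direction $u'$, a single well-aligned node via the halfspace argument of Lemma~\ref{lemma:: x-lambda} transported into the face $A$ --- both versions rest on the same fact that $P \cap A$ is a $1$-covering of $A$, and yours has the minor advantage of reusing an already-proved lemma instead of asserting the existence of a uniformly non-degenerate $(d-1)$-tuple.
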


\begin{proof}
As $P \cap H$ is a $1$-covering for $A$, there is a set $J = \{ a_1,
\ldots, a_{d-1} \} \subset I_A$ of $d-1$ points such that $c' \le
|x' - x'_a| \le c''$ and $\det(x' - x'_{a_1}, \ldots, x' -
x'_{a_{d-1}}) \ge c'$ for suitable constants $c'$ and $c''$. Then
\begin{equation}
\begin{split}
    B
    &=
    \sum_{a \in I} w^*_a(x) (x' - x_a') \otimes (x' - x_a')\\
    &\ge
    Z^{-1} \sum_{a \in J} e^{-\beta|x - x_{\bar{a}}|^2 + \langle \lambda', x' - x_{\bar{a}}' \rangle + \rho \lambda^*_1}
       (x' - x_a') \otimes (x' - x_a')\\
    &\ge
    c \sum_{a \in J} (x' - x_a') \otimes (x' - x_a') \\
    &\ge
    c \operatorname{Id}_{d-1}
\end{split}
\end{equation}
since all the projections $(x' - x_a') \otimes (x' - x_a')$ are nonnegative.
\end{proof}

As a consequence of the above results, we obtain an estimate for the inverse matrix $(J^*)^{-1} = (\tilde{J}_{ij})$.

\begin{lemma}\label{lemma:J-inverse-bounds}
For any $0 < \mu < \frac{1}{2}$ there exists a constant $C$ such that
\begin{align}
    |\tilde{J}_{ij}|
    \begin{cases}
    \le C \rho^{-1} &\text{for } i = j = 1, \\
    \le C \rho^{- \mu} &\text{for } i = 1, j = 2, \ldots, d \text{ or } j = 1, i = 2, \ldots, d \quad \text{and}\\
    \le C &\text{for } i,j = 2, \ldots, d.
    \end{cases}
\end{align}
\end{lemma}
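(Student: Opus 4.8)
The plan is to invert $J^*$ by blocks. Writing
\begin{equation}
    J^*(x)
    =
    \begin{pmatrix}
        J^*_{11} & b^T \\
        b & B
    \end{pmatrix},
    \qquad
    b = (J^*_{21}, \ldots, J^*_{d1})^T \in \R^{d-1},
\end{equation}
and recalling that $J^*$ is symmetric positive definite, its Schur complement $\sigma = J^*_{11} - b^T B^{-1} b$ is a positive scalar and the inverse has the closed form $\tilde{J}_{11} = \sigma^{-1}$, $(\tilde{J}_{21}, \ldots, \tilde{J}_{d1})^T = -\sigma^{-1} B^{-1} b$, and $(\tilde{J}_{ij})_{2 \le i,j \le d} = B^{-1} + \sigma^{-1} B^{-1} b b^T B^{-1}$. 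Lemma~\ref{lemma:B-lower-bound} gives $\|B^{-1}\| \le c^{-1}$, while Lemma~\ref{lemma:J1j-upper-bound}, whose exponent is a free parameter in $(0,1)$, gives $|b| \le C \rho^{\nu}$ for any $\nu < 1$. Thus all three entry bounds reduce to a single lower estimate on the Schur complement, namely $\sigma \ge c\rho$; I would fix $\nu \in [1-\mu, 1)$, which is admissible since $0 < \mu < \tfrac12$.

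Granting $\sigma \ge c\rho$, the bookkeeping is immediate. First, $\tilde{J}_{11} = \sigma^{-1} \le C\rho^{-1}$. Next, the off-diagonal vector obeys $|\tilde{J}_{j1}| \le \sigma^{-1}\|B^{-1}\|\,|b| \le C\rho^{\nu-1} \le C\rho^{-\mu}$, where the last step uses $\nu \ge 1-\mu$. Finally the lower block obeys $|\tilde{J}_{ij}| \le \|B^{-1}\| + \sigma^{-1}\|B^{-1}\|^2|b|^2 \le C + C\rho^{2\nu - 1} \le C$, where the last step uses $2\nu \ge 2(1-\mu) > 1$. This recovers exactly the stated exponents $-1$, $-\mu$, $0$.

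The real content is therefore the lower bound $\sigma \ge c\rho$, which I expect to be the main obstacle. The naive route fails: Lemma~\ref{lemma:J1j-upper-bound} only yields $b^T B^{-1} b \le C\rho^{2\nu}$, and for $\nu < 1$ this can dominate $J^*_{11}$, which is of order $\rho$ by Lemma~\ref{lemma:J11-lower-bound}, so one cannot argue that $b^T B^{-1} b$ is a small perturbation of $J^*_{11}$. Instead I would use $\sigma = \det J^* / \det B$; since $\det B \in [c, C]$ (Lemma~\ref{lemma:B-lower-bound} from below, boundedness of the second moments $\sum_a w^*_a|x - x_a|^2$ from above), it suffices to show $\det J^* \ge c\rho$. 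Recalling from~\eqref{eq:: def J*} that, since $r(x,\lambda^*)=0$, the matrix $J^* = \sum_{a \in I} w^*_a(x)(x - x_a) \otimes (x - x_a)$ is the covariance of $Y = x - x_a$ under the weights $w^*_a$, I would bound $J^*$ below by the rank-$d$ sum over a frame of $d$ nodes: the $d-1$ boundary nodes $a_1, \ldots, a_{d-1} \in I_A$ furnished by Lemma~\ref{lemma:B-lower-bound}, whose tangential parts $x' - x'_{a_k}$ have determinant bounded below and whose weights are bounded below, together with one near-interior node $a_0 \notin I_A$ of weight $w^*_{a_0} \ge c\rho$. Writing $V = (x - x_{a_0}, \ldots, x - x_{a_{d-1}})$ and $D = \operatorname{diag}(w^*_{a_0}, \ldots, w^*_{a_{d-1}})$ and dropping the remaining positive-semidefinite terms gives $J^* \ge V D V^T$, hence $\det J^* \ge (\det V)^2 \prod_{k} w^*_{a_k} \ge c\rho$; here $\det V$ is bounded below in modulus because expanding along the first (normal) row isolates the term $\pm(x_{a_0} - x)_1 \det(x' - x'_{a_1}, \ldots, x' - x'_{a_{d-1}})$, of modulus at least $(\eta - \rho)c'$, the remaining terms being $O(\rho)$.

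The delicate step is securing the near-interior node $a_0$ with $w^*_{a_0} \ge c\rho$ and $|x - x_{a_0}|$ bounded. The optimality condition~\eqref{eq:opt-cond}, rewritten as $\sum_{a \notin I_A} w^*_a(x)(x_a - x)_1 = \rho \sum_{a \in I_A} w^*_a(x)$, shows that the interior nodes carry a distance-weighted mass of order exactly $\rho$; the task is to show that a fixed fraction of this mass concentrates on a single nearest-layer node at bounded distance. This requires controlling the nodes deeper in $\Omega$ against the blow-up of $\lambda^*_1$ as $\rho \to 0$, exploiting the gap $(x_a - x)_1 \ge \eta$ for $a \notin I_A$, the resulting exponential suppression $e^{-(x_a - x)_1 \lambda^*_1}$, the covering hypothesis, and the ring count of Proposition~\ref{prop:: UB number node points on rings}, exactly along the lines of the proofs of Lemmas~\ref{lemma:J11-lower-bound} and~\ref{lemma:J1j-upper-bound}. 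Once $w^*_{a_0} \ge c\rho$ is established, the determinant bound closes the argument and delivers $\sigma \ge c\rho$.
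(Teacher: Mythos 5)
Your block-inversion bookkeeping is fine and is equivalent to the paper's route (the paper uses Cramer's rule and cofactor expansions rather than the Schur complement, but the content is identical: everything hinges on a lower bound of order $\rho$ for $\det J^*/\det B = \sigma$). The problem is in how you try to obtain $\sigma \ge c\rho$. You assert that ``the naive route fails'' because $b^T B^{-1} b \le C\rho^{2\nu}$ ``can dominate $J^*_{11}$'' for $\nu < 1$ --- but this is false for the very $\nu$ you have already fixed. You chose $\nu \in [1-\mu, 1)$ with $\mu < \tfrac12$, so $\nu > \tfrac12$ and hence $2\nu > 1$; then
\begin{equation*}
    \sigma \;=\; J^*_{11} - b^T B^{-1} b \;\ge\; c\rho - \|B^{-1}\|\,|b|^2 \;\ge\; c\rho - C\rho^{2\nu} \;\ge\; \tfrac{c}{2}\,\rho
\end{equation*}
for $\rho$ small, by Lemmas~\ref{lemma:J11-lower-bound}, \ref{lemma:J1j-upper-bound} and~\ref{lemma:B-lower-bound}. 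That one line closes the proof. This is exactly what the paper does in determinant form: it expands $\det J^*$ along the first row, observes $\det J^* = J^*_{11}\det B + O(\rho^{2\tilde\mu})$ with $\tilde\mu > \tfrac12$, and absorbs the error into $J^*_{11}\det B \ge c\rho$.

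Having (incorrectly) discarded this argument, you embark on a detour --- bounding $\det J^*$ from below by a frame $VDV^T$ built from $d-1$ boundary nodes plus one interior node $a_0$ with $w^*_{a_0} \ge c\rho$ at bounded distance from $x$ --- and you explicitly leave the key step of that detour (the existence of such an $a_0$) unproven, calling it ``the delicate step'' and only sketching what would be needed. The optimality condition~\eqref{eq:opt-cond} gives you the total distance-weighted interior mass $\sum_{a\notin I_A} w^*_a(x)(x_a-x)_1 = \rho\sum_{a\in I_A}w^*_a(x) \sim \rho$, but extracting a single node carrying a fixed fraction of it requires a quantitative blow-up rate for $\lambda^*_1$ (e.g.\ $e^{-\eta\lambda^*_1}\lesssim\rho$) together with the ring counts of Proposition~\ref{prop:: UB number node points on rings}; none of this is carried out. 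As submitted, therefore, the proof is incomplete --- even though all the ingredients for the short, correct argument are already on your page.
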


\begin{proof}
First note that, expanding with respect to the first row, for $\frac{1}{2} < \tilde{\mu} < 1$ we have
\begin{equation}
    \det J^*
    =
    J^*_{11} \det B + O(\rho^{2 \tilde{\mu}})
   = J^*_{11} \det B
   \ge c J^*_{11}
   \ge c \rho
\end{equation}
by Lemmas~\ref{lemma:J11-lower-bound}, \ref{lemma:J1j-upper-bound} and~\ref{lemma:B-lower-bound}. Furthermore, as $|J^*| \le C$, we have
\begin{align}
    |(\operatorname{cof} J^*)_{ij}|
    \begin{cases}
    = |\det B| \le C &\text{for } i = j = 1, \\
    \le C \rho^{\tilde{\mu}} &\text{for } i = 1, j \ge 2 \text{ or } j = 1, i \ge 2 \quad \text{and} \\
    \le C (J^*_{11} + \rho^{2 \tilde{\mu}}) &\text{for } i,j \ge 2.
    \end{cases}
\end{align}
for $C$ sufficiently large. Now, Cramer's rule
\begin{align}
  (J^*)^{-1}
  =
  (\det J^*)^{-1} (\operatorname{cof} J^*)^T
\end{align}
implies
\begin{align}
  |\tilde{J}_{ij}|
  \begin{cases}
    \le C \rho^{-1} &\text{for } i = j = 1, \\
    \le C \rho^{\tilde{\mu} - 1} &\text{for } i = 1, j \ge 2 \text{ or } j = 1, i \ge 2 \quad \text{and}\\
    \le C &\text{for } i,j \ge 2
  \end{cases}
\end{align}
and thus the assertion follows by choosing $\tilde{\mu}$ such that $\mu = 1 -\tilde{\mu}$.
\end{proof}

\begin{lemma}\label{lemma:nabla-w-control}
For any $s > 0$ and $0 < \mu < \frac{1}{2}$ there is a constant $C > 0$ such that
\begin{equation}
    \left(1 + |x - x_a|^2 \right)^s |\nabla w_a^*(x)|
    \le
    C (1 + \rho^{-\mu}|x - x_a|).
\end{equation}
\end{lemma}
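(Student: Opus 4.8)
The plan is to start from the gradient formula \eqref{eq:: grad p}, which gives $|\nabla w_a^*(x)| = w_a^*(x)\,|(J^*(x))^{-1}(x - x_a)|$, and to estimate the two factors separately. Writing $y = x - x_a = (y_1,y') \in \R \times \R^{d-1}$ and inserting the entrywise bounds of Lemma~\ref{lemma:J-inverse-bounds} for $(J^*)^{-1} = (\tilde{J}_{ij})$, the first component of $(J^*)^{-1}y$ is controlled by $C(\rho^{-1}|y_1| + \rho^{-\mu}|y'|)$ and the remaining components by $C(\rho^{-\mu}|y_1| + |y'|)$. The dangerous factor is the $\rho^{-1}$ coming from $\tilde{J}_{11}$; the whole point of the argument is that this factor only ever multiplies $|y_1| = |(x-x_a)_1|$, which is small precisely for the nodes carrying non-negligible weight. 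I would therefore split the node index set into $I_A$ and $I \setminus I_A$ and treat the two families differently.

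For $a \in I_A$ one has $(x-x_a)_1 = \rho$, so $\rho^{-1}|y_1| = 1$ and the bound on $(J^*)^{-1}y$ collapses to $|(J^*)^{-1}y| \le C(1 + \rho^{-\mu}|y'|) \le C(1 + \rho^{-\mu}|y|)$. Moreover, since $Z$ is bounded below, $|\lambda'| \le C$ by Lemma~\ref{lemma:lambda-t-bound}, and $\rho\lambda^*_1 \to 0$, the weight is rapidly decreasing: $w_a^*(x) \le C e^{-\beta|y|^2 + C|y|}$, so that $(1+|y|^2)^s w_a^*(x) \le C$ uniformly. Multiplying the two factors yields the claim for $a \in I_A$.

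For $a \in I \setminus I_A$ the key input is the optimality identity \eqref{eq:opt-cond} (equivalently $r(x,\lambda^*) = 0$), which after dividing by $Z$ reads $\sum_{a \in I \setminus I_A} w_a^*(x)(x_a - x)_1 = \rho \sum_{a \in I_A} w_a^*(x) \le \rho$. Since every summand is non-negative and $(x_a - x)_1 \ge \eta$ for these nodes, each individual term satisfies $w_a^*(x)(x_a-x)_1 \le \rho$, hence $w_a^*(x) \le C\rho$. This is the genuine smallness that compensates the $\rho^{-1}$. At the same time, for $\rho$ small so that $\lambda^*_1 > 0$, the bound $w_a^*(x) \le C e^{-\beta|y|^2 + C|y|} =: C\phi_a$ holds as before (now using $-(x_a-x)_1\lambda^*_1 \le 0$). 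Interpolating the two bounds, $w_a^* \le (C\rho)^{1-\mu}(C\phi_a)^\mu = C\rho^{1-\mu}\phi_a^\mu$, and bounding the inverse-Hessian factor crudely by $|(J^*)^{-1}y| \le C\rho^{-1}|y|$, I obtain $(1+|y|^2)^s w_a^* |(J^*)^{-1}y| \le C\rho^{-\mu}(1+|y|^2)^s \phi_a^\mu |y| \le C\rho^{-\mu}$, because $\phi_a^\mu$ is rapidly decreasing and absorbs the polynomial weight. Finally, since $(x_a-x)_1 \ge \eta$ forces $|y| \ge \eta$, the right-hand side of the lemma is bounded below by $C\eta\rho^{-\mu}$, so $C\rho^{-\mu} \le C'(1 + \rho^{-\mu}|y|)$ and the estimate follows.

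The main obstacle is conceptual rather than computational: one must recognise that the apparently fatal $\rho^{-1}$ blow-up of $(J^*)^{-1}$ in the normal direction is always paired with the weight $w_a^*$ of the interior nodes, which is $\mathrm{O}(\rho)$ by the optimality condition, and that the loss $\rho^{-\mu}$ incurred by interpolating against the Gaussian decay is exactly the loss permitted on the right-hand side because $|x - x_a| \ge \eta$ for all interior nodes. Verifying the $\mathrm{O}(\rho)$ weight bound and choosing the interpolation exponent $1 - \mu$ to match the admissible power $\rho^{-\mu}$ are the two steps that require care; the remainder is routine Gaussian-times-polynomial bookkeeping.
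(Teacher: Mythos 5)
Your proof is correct, and its skeleton coincides with the paper's: split the nodes into $I_A$ and $I \setminus I_A$, combine the gradient formula \eqref{eq:: grad p} with the entrywise bounds of Lemma~\ref{lemma:J-inverse-bounds}, and use the optimality condition \eqref{eq:opt-cond} to compensate the $\rho^{-1}$ singularity of $\tilde{J}_{11}$ for interior nodes; the boundary-node case is handled identically in both arguments. Where you genuinely diverge is the interior-node estimate. The paper majorizes the single term $(1 + |x - x_a|^2)^s w_a^*(x)$ by the full sum $\eta^{-1} \sum_{b \notin I_A} w_b^*(x) (x_b - x)_1 (1 + |x - x_b|^2)^s$ and then re-runs the H\"older argument of Lemma~\ref{lemma:J1j-upper-bound} on that sum to extract a factor $\rho^{\tilde{\mu}}$. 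You instead read off the pointwise bound $w_a^*(x) \le \rho/\eta$ directly from a single nonnegative term of \eqref{eq:opt-cond} together with $\sum_{a \in I_A} w_a^* \le 1$, and interpolate it against the Gaussian bound $w_a^* \le C \phi_a$ to obtain $w_a^* \le C \rho^{1-\mu} \phi_a^{\mu}$; the polynomial weight and the factor $|x - x_a|$ are then absorbed by $\phi_a^{\mu}$, and $|x - x_a| \ge \eta$ converts the resulting $C\rho^{-\mu}$ into the stated right-hand side. Your version is somewhat cleaner and more self-contained: the pointwise $\mathrm{O}(\rho)$ weight bound is a stronger and more transparent consequence of \eqref{eq:opt-cond} than the summed H\"older estimate, whereas the paper's route has the advantage of reusing machinery already set up for Lemma~\ref{lemma:J1j-upper-bound}. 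The only caveat, shared with the paper, is that your Gaussian bound for interior nodes uses $\lambda_1^* \ge 0$ and hence is valid only for $\rho$ small, which is the regime in which the whole section operates.
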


\begin{proof}
If $a \in I_A$, then $x - x_a = (\rho, x' - x_a')$ and Lemma~\ref{lemma:J-inverse-bounds} shows
\begin{align}
  \left| (J^*)^{-1} (x - x_a) \right|
  \le C(1 + \rho^{-\mu}|x' - x_a'|)
  \le C(1 + \rho^{-\mu}|x - x_a|).
\end{align}
So, by \eqref{eq:: grad p},
\begin{align}
  |\nabla w^*_a(x)|
  \le C |w^*_a(x)| (1 + \rho^{-\mu}|x - x_a|).
\end{align}
Now, using that $(1 + |x - x_a|^2)^s |w^*_a(x)| \le C$ for any $a$, we see that the estimate holds true for $a \in I_A$.

On the other hand, if $a \notin I_A$, then Lemma~\ref{lemma:J-inverse-bounds} only gives
\begin{align}
  \left| (J^*)^{-1} (x - x_a) \right|
  \le C \rho^{-1}|x - x_a|,
\end{align}
whence
\begin{align}
  |\nabla w^*_a(x)|
  \le C |w^*_a(x)| \rho^{-1}|x - x_a|.
\end{align}
But since $(x_b - x)_1 \ge \eta$ for all $b \notin I_A$, we also get
\begin{equation}
\begin{split}
    &(1 + |x - x_a|^2)^s |w^*_a(x)|\\
    &=
    (1 + |x - x_a|^2)^s Z^{-1} e^{-\beta|x - x_a|^2
    + \langle \lambda', x' - x_a' \rangle - (x_a - x)_1 \lambda^*_1} \\
    &\le
    Z^{-1} \eta^{-1} \sum_{b \in I \setminus I_A}
       e^{-\beta|x - x_b|^2 + \langle \lambda', x' - x_b' \rangle
       - (x_b - x)_1 \lambda^*_1} (x_b - x)_1 (1 + |x - x_b|^2)^s.
\end{split}
\end{equation}
This term can now be estimated by $C\rho^{\tilde{\mu}} \rho^{-1}$
for $0 < \tilde{\mu} < 1$ precisely as the left hand side of
\eqref{eq:Hoelder-Absch} in Lemma~\ref{lemma:J1j-upper-bound}, which
leads to
\begin{align}
  (1 + |x - x_a|^2)^s |\nabla w^*_a(x)|
  \le C \rho^{-\mu}|x - x_a|
\end{align}
for $0 < \mu < 1$.
\end{proof}

Undoing the rescaling of $h$ we can now summarize the previous lemmas in the following proposition the boundary behavior of $\nabla w_a^*$ near flat parts of $\partial \Omega$.

\begin{proposition}\label{prop:blow-up-est}
Suppose $x = (\rho, x') \in \R \times \R^{d - 1}$ is such that
$B_{\delta h}(0,x') \cap H = B_{\delta h}(0,x') \cap A$ for a
boundary $(d-1)$-face $A = \partial \Omega \cap H$. Let $s > 0$ and
$0 < \mu < \frac{1}{2}$. There is a constant $C > 0$ such that
\begin{align}
    \left(1 + \left|\frac{x - x_a}{h} \right|^2 \right)^s h |\nabla w^*_{a,h}(x)|
    &\le
    C \left( 1 + h^{\mu} {\rm dist}^{-\mu}\left(x, \partial \Omega \right) \right).
\end{align}
\end{proposition}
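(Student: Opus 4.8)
The plan is to read the proposition off Lemma~\ref{lemma:nabla-w-control} by undoing the normalization $h = 1$ that was imposed before the preceding lemmas, i.e.\ by dilating the whole configuration by the factor $h^{-1}$. The key observation is that the LME shape functions are invariant under this rescaling once $\beta$ is adjusted accordingly: setting $\tilde x = x/h$, $\tilde x_a = x_a/h$ and $\tilde P = h^{-1}P$, the choice $\beta = \gamma/h^2$ gives $\beta|x - x_a|^2 = \gamma|\tilde x - \tilde x_a|^2$ and $\langle \lambda, x - x_a\rangle = \langle h\lambda, \tilde x - \tilde x_a\rangle$. Hence the partition function, the minimizer $\lambda^*$ (up to the factor $h$), and every shape function transform as $w^*_{a,h}(x) = \tilde w^*_a(\tilde x)$, where $\tilde w^*_a$ denotes the LME shape function of the rescaled scheme $\tilde P$ with $\tilde h = 1$ and $\tilde\beta = \gamma$. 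I would first check that every standing hypothesis survives the dilation: $P$ being an $h$-covering of $\Omega$ becomes $\tilde P$ a $1$-covering of $h^{-1}\Omega$, the empty-strip condition $\{x\in P \colon 0 < {\rm dist}(x,\partial\Omega) < \eta h\} = \emptyset$ becomes the same condition with $\eta h$ replaced by $\eta$, and the geometric hypothesis $B_{\delta h}(0,x')\cap H = B_{\delta h}(0,x')\cap A$ becomes $B_{\delta}(0,\tilde x')\cap H = B_{\delta}(0,\tilde x')\cap A$. Thus Lemma~\ref{lemma:nabla-w-control} applies verbatim to $\tilde w^*_a$ at the point $\tilde x = (\tilde\rho, \tilde x')$ with $\tilde\rho = \rho/h$.

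The second step is to transport the estimate back and absorb the stray linear factor. Differentiating $w^*_{a,h}(x) = \tilde w^*_a(x/h)$ gives $\nabla_x w^*_{a,h}(x) = h^{-1}(\nabla \tilde w^*_a)(\tilde x)$, so $h|\nabla w^*_{a,h}(x)| = |\nabla_{\tilde x}\tilde w^*_a(\tilde x)|$, while $|\tilde x - \tilde x_a| = |x - x_a|/h$. The only mismatch with the target inequality is the factor $|\tilde x - \tilde x_a|$ on the right-hand side of Lemma~\ref{lemma:nabla-w-control}; I would remove it by applying the lemma with the enlarged exponent $s + \tfrac12$ and then using $|\tilde x - \tilde x_a| \le (1 + |\tilde x - \tilde x_a|^2)^{1/2}$, which yields
\[
    1 + \tilde\rho^{-\mu}|\tilde x - \tilde x_a|
    \le
    (1 + \tilde\rho^{-\mu})(1 + |\tilde x - \tilde x_a|^2)^{1/2}.
\]
Feeding this into the lemma and cancelling one factor of $(1 + |\tilde x - \tilde x_a|^2)^{1/2}$ gives the clean bound $(1 + |\tilde x - \tilde x_a|^2)^{s}|\nabla\tilde w^*_a(\tilde x)| \le C(1 + \tilde\rho^{-\mu})$. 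Rewriting every tilde-quantity in terms of $x$, in particular $\tilde\rho^{-\mu} = h^\mu\rho^{-\mu}$, turns this into precisely the left-hand side of the proposition bounded by $C(1 + h^\mu\rho^{-\mu})$.

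Finally I would replace $\rho$ by the distance to the boundary. The center of the distinguished ball satisfies $(0,x')\in B_{\delta h}(0,x')\cap H = B_{\delta h}(0,x')\cap A \subset \partial\Omega$, and it lies at Euclidean distance $\rho$ from $x = (\rho,x')$; hence ${\rm dist}(x,\partial\Omega)\le\rho$, so $\rho^{-\mu}\le{\rm dist}^{-\mu}(x,\partial\Omega)$ and the right-hand side is dominated by $C(1 + h^\mu{\rm dist}^{-\mu}(x,\partial\Omega))$, as claimed. I expect the only point requiring genuine care to be the bookkeeping in the second step: a naive rescaling of Lemma~\ref{lemma:nabla-w-control} produces the extra factors $h^{\mu - 1}\rho^{-\mu}|x - x_a|$, and it is precisely the device of spending half a power of the polynomial-decay weight to absorb $|x - x_a|$ — legitimate because the lemma holds for every $s > 0$ — that restores the correct power $h^\mu$ and removes the superfluous length factor. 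Everything else is a direct substitution.
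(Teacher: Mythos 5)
Your argument is correct and follows the paper's own route: rescale to $h=1$ using $w^*_{a,h}(x)=w^*_{a,1}(x/h)$ (which the paper verifies via $h\lambda^*_{(h)}(x)=\lambda^*_{(1)}(x/h)$), apply Lemma~\ref{lemma:nabla-w-control}, absorb the stray factor $|x-x_a|/h$ by exploiting the arbitrariness of $s$, and pass from $\rho$ to ${\rm dist}(x,\partial\Omega)$ via ${\rm dist}(x,\partial\Omega)\le\rho$. Your explicit bookkeeping with the exponent $s+\tfrac12$ and the inequality $1+ab\le(1+a)(1+b)$ is just a cleaner rendering of the paper's remark that the last factor can be absorbed into the prefactor.
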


\begin{proof}
If $P$ is an $h$-covering for $\Omega$, then $h^{-1} P$ is a $1$-covering for $h^{-1} \Omega$. Using subscripts $h$ to highlight the dependence on $h$, we have
\begin{equation}
\begin{split}
    Z_h(x)
    &=
    \sum_{x_b \in P} e^{-\frac{\gamma}{h^2} |x - x_b|^2 + \langle \lambda^*_{(h)}, x - x_b \rangle} \\
    &=
    \sum_{x_b \in P} e^{-\gamma |\frac{x - x_b}{h}|^2 + \langle h \lambda^*_{(h)}, \frac{x - x_b}{h} \rangle} \\
    &=
    \sum_{x_b \in h^{-1} P} e^{-\gamma |\frac{x}{h} - x_b|^2 + \langle h \lambda^*_{(h)}, \frac{x}{h} - x_b
    \rangle}.
\end{split}
\end{equation}
This expression is minimized at $h \lambda^*_{(h)}(x) =
\lambda^*_{(1)}(\frac{x}{h})$. For the shape functions $w^*_{a,h}$
we denote by $w_{a,1}$ the shape function corresponding to the node
$\frac{x_a}{h} \in h^{-1}P$ and obtain
\begin{equation}
\begin{split}
  w^*_{a,h}(x)
  &= \frac{e^{-\frac{\gamma}{h^2} |x - x_a|^2 + \langle \lambda^*_{(h)}, x - x_a \rangle}}
     {\sum_{x_b \in P} e^{-\frac{\gamma}{h^2} |x - x_b|^2 +
     \langle \lambda^*_{(h)}, x - x_b \rangle}} \\
  &= \frac{e^{-\gamma |\frac{x - x_a}{h}|^2 + \langle h \lambda^*_{(h)}, \frac{x - x_a}{h} \rangle}}
     {\sum_{x_b \in P} e^{-\gamma |\frac{x - x_b}{h}|^2 +
     \langle h \lambda^*_{(h)}, \frac{x - x_b}{h} \rangle}} \\
  &= \frac{e^{-\gamma |\frac{x}{h} - \frac{x_a}{h}|^2 +
     \langle \lambda^*_{(1)}(\frac{x}{h}), \frac{x}{h} - \frac{x_a}{h} \rangle}}
     {\sum_{x_b \in h^{-1} P} e^{-\gamma |\frac{x}{h} - x_b|^2 + \langle \lambda^*_{(1)}(\frac{x}{h}), \frac{x}{h} - x_b \rangle}} \\
  &= w^*_{a, 1}\left(\frac{x}{h}\right).
\end{split}
\end{equation}

Applying Lemma~\ref{lemma:nabla-w-control} to  $w^*_{a, 1}$ we therefore obtain
\begin{equation}
\begin{split}
  \left(1 + \left|\frac{x - x_a}{h} \right|^2 \right)^s |\nabla w^*_{a,h}(x)|
  &= \left(1 + \left|\frac{x - x_a}{h} \right|^2 \right)^s h^{-1} \left| \nabla w^*_{a, 1}\left(\frac{x}{h}\right) \right| \\
  &\le C h^{-1} \left( 1 + {\rm dist}^{-\mu}\left(\frac{x}{h}, A\right)\left|\frac{x}{h} - \frac{x_a}{h}\right| \right).
\end{split}
\end{equation}
From our previous calculations we know already that the left hand
side is bounded by a constant away from the boundary of $\Omega$.
Suppose $x$ is such that ${\rm dist}^{-\mu}(\frac{x}{h}, A) \ge 1$.
Since $s$ is arbitrary we can absorb the last factor on the right
hand side into the prefactor of the left hand side and finally
obtain
\begin{align}
    \left(1 + \left|\frac{x - x_a}{h} \right|^2 \right)^s h |\nabla w^*_{a,h}(x)|
    &\le
    C \left( 1 + h^{\mu} {\rm dist}^{-\mu}\left(x, \partial \Omega \right) \right).
\end{align}
\end{proof}

Now suppose that $x$ is a general point near a possibly lower
dimensional edge of $\partial \Omega$. More precisely, $x$ is close
to an $m$-face of $A$ of $\partial \Omega$, which is the
intersection of $d - m$ hyperplanes $H_1, \ldots, H_{d-m}$ with
linearly independent normals which constitute $\partial \Omega$ in
the vicinity of $x$.

\begin{lemma}\label{lemma:singular-boundary}
There exists $R > 0$ such that for all $x_a \in P$ with ${\rm dist}(x_a, \partial \Omega) \ge Rh$
\begin{align}
  \left(1 + \left|\frac{x - x_a}{h} \right|^2 \right)^s h |\nabla w^*_{a,h}(x)|
  \le
  1.
\end{align}
\end{lemma}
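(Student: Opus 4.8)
The plan is to argue, after rescaling so that $h = 1$ exactly as in the proof of Proposition~\ref{prop:blow-up-est} (the general statement then follows by undoing the rescaling), that $\left(1 + |x - x_a|^2\right)^s |\nabla w_a^*(x)| \le 1$ for the rescaled shape functions. Throughout, $x$ is the given point near the $m$-face $A$, and I write $\nu_1,\dots,\nu_{d-m}$ for the inward unit normals of the active hyperplanes $H_1, \ldots, H_{d-m}$, together with $\rho_i = {\rm dist}(x, H_i)$ and $\rho_{\min} = \min_i \rho_i$. Starting from \eqref{eq:: grad p}, so that $|\nabla w_a^*(x)| = w_a^*(x)\,|(J^*(x))^{-1}(x - x_a)|$, I would bound the two factors separately: the geometric factor $|(J^*)^{-1}(x-x_a)|$, which may blow up as $x \to \partial\Omega$, and the weight $w_a^*(x)$, which for a deep node is exponentially small and which I claim dominates.

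First I would record a polynomial bound $\|(J^*(x))^{-1}\| \le C\,\rho_{\min}^{-p}$ for some fixed exponent $p = p(d,m)$. This is the $m$-face analogue of Lemma~\ref{lemma:J-inverse-bounds}: decomposing $J^*$ into its normal block, spanned by $\nu_1,\dots,\nu_{d-m}$, and its tangential block, and repeating the cofactor and Cramer's-rule bookkeeping of Lemmas~\ref{lemma:J11-lower-bound}--\ref{lemma:J-inverse-bounds}, one obtains $\lambda_{\min}(J^*) \ge c\,\rho_{\min}^{p}$, whence the claimed bound since $J^*$ is positive definite. Because the final estimate only needs \emph{some} power $p$, its precise value is immaterial.

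The heart of the matter is the estimate of $w_a^*(x)$ for a node with ${\rm dist}(x_a,\partial\Omega) \ge R$. Expressing $\lambda^*(x)$ in the oblique basis $\{\nu_i\} \cup (\text{tangent directions})$, I would first extend Lemma~\ref{lemma:lambda-t-bound} to show that the tangential part of $\lambda^*$ stays bounded while each normal coefficient $\lambda_i^* \to +\infty$. Feeding the two-sided bounds on $Z$ recorded above into the optimality condition \eqref{eq:opt-cond}, taken in each active direction $i$ (where $\rho_i$ multiplies the bounded boundary contribution while the interior contribution is suppressed by $e^{-(x_a - x)_i \lambda_i^*}$ and bounded below through the $\eta$-gap), upgrades this qualitative blow-up to the quantitative rate $\lambda_i^* \ge c|\log \rho_i| - C$. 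Since ${\rm dist}(x_a,\partial\Omega)\ge R$ forces $\langle x - x_a,\nu_i\rangle \le -(R - \rho_i)$ for every active $i$, I then obtain from \eqref{eq:: optimal probas}, using the boundedness of the tangential multiplier and of $Z^{-1}$,
\begin{equation*}
    w_a^*(x)
    \le
    C\, e^{-\gamma|x - x_a|^2 + C|x - x_a|}\,\rho_{\min}^{cR}.
\end{equation*}

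Finally I would combine the two bounds. Multiplying by $(1 + |x - x_a|^2)^s$ and $|x - x_a|$ gives
\begin{equation*}
    \left(1 + |x - x_a|^2\right)^s |\nabla w_a^*(x)|
    \le
    C\,\rho_{\min}^{\,cR - p}
    \left(1 + |x - x_a|^2\right)^s |x - x_a|\,
    e^{-\gamma|x - x_a|^2 + C|x - x_a|},
\end{equation*}
and the $|x - x_a|$-dependent factor is bounded by a constant, since $t \mapsto (1+t^2)^s\, t\, e^{-\gamma t^2 + Ct}$ is rapidly decreasing, exactly as in Proposition~\ref{prop:: LME-shape functions have poly-decay}. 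As $x$ lies near the $m$-face we have $\rho_{\min} \le \delta < 1$; choosing $R$ so large that $cR - p > 0$ and $C\,\delta^{\,cR - p} \le 1$ yields the asserted bound $\le 1$, and undoing the rescaling completes the argument. The main obstacle is the third step: converting the mere divergence of the normal multipliers $\lambda_i^*$ into the quantitative logarithmic lower bound uniformly in the multi-hyperplane corner geometry, where the normals are only linearly independent rather than orthogonal, while simultaneously controlling the off-diagonal normal--tangential entries of $J^*$ that enter the inverse bound of the second step.
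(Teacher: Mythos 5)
Your overall skeleton (bound $\lVert (J^*)^{-1}\rVert$ from above, bound $w_a^*$ for deep nodes from above, and let the second beat the first by taking $R$ large) matches the paper's, but you route both bounds through powers of $\rho_{\min}$, and this is where the proposal has genuine gaps. Both of your key quantitative inputs --- the polynomial bound $\lambda_{\min}(J^*) \ge c\,\rho_{\min}^{p}$ at an $m$-face, and the logarithmic rate $\lambda_i^* \ge c|\log \rho_i| - C$ for each normal multiplier --- are left unproved, and neither is a routine extension of the single-face Lemmas \ref{lemma:J11-lower-bound}--\ref{lemma:J-inverse-bounds}. Near a corner the normal block of $J^*$ degenerates in all $d-m$ directions at rates governed by the (possibly wildly different) $\rho_i$, the off-diagonal normal--normal entries are no longer controlled by a single small parameter, and the cofactor bookkeeping does not simply carry over; moreover your inverse bound implicitly requires an \emph{upper} bound $|\lambda^*| \lesssim |\log\rho_{\min}|$, which is yet another unproved rate. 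You correctly identify the multiplier rate as the main obstacle, but the $(J^*)^{-1}$ estimate is at least as problematic. There is also a loose end at the very last step: your constant in $w_a^*\le C e^{\cdots}\rho_{\min}^{cR}$ picks up a factor like $e^{CR}$ from the bounded (or possibly negative) parts of $\lambda^*$, so $C\,\delta^{cR-p}\le 1$ is not automatic for large $R$ unless $\delta$ is also taken small relative to the constants --- a dependence you do not track.

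The paper's proof is designed precisely to avoid needing any rate relating $|\lambda^*|$ to the distance to the boundary. It takes $H$ to be the hyperplane through $x$ perpendicular to $\lambda^*$ and shows the single geometric inequality $\langle y - x, \lambda^*/|\lambda^*|\rangle \ge c\,{\rm dist}(y, H_1\cup\cdots\cup H_{d-m}) - C$ for $y\in\Omega$. From this, a well-spread set of $d$ nearby nodes gives $J^* \ge c\, e^{-C|\lambda^*|}\operatorname{Id}$, hence $(J^*)^{-1} \le C e^{C|\lambda^*|}\operatorname{Id}$, while a deep node with ${\rm dist}(x_a,\partial\Omega)\ge R$ satisfies $w_a^* \le C e^{-\beta|x-x_a|^2 - (cR - C)|\lambda^*|}$. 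Both estimates are exponential \emph{in $|\lambda^*|$ itself}, with the deep-node decay rate proportional to $R$; multiplying them yields $C e^{(2C - cR)|\lambda^*|} \le 1$ once $R$ is large, regardless of how fast or slowly $|\lambda^*|$ blows up as $x\to\partial\Omega$. If you want to salvage your approach you would have to actually prove the corner analogues of the $J^*$ and $\lambda^*$ estimates (e.g.\ via a Cauchy--Binet expansion of $\det J^*$ over $d$-tuples of nodes and a per-direction analysis of the optimality conditions), which is substantially more work than the lemma requires.
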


\begin{proof}
We first assume again that $h = 1$. Let $H$ be the hyperplane containing $x$ which is perpendicular to $\lambda^*$. Similarly as in Lemma~\ref{lemma:lambda-t-bound} we see that, as ${\rm dist}(x, A) \to 0$, $|\lambda^*|$ tends to infinity such that the projection of $\lambda^*$ onto $\bigcap H_i$ remains bounded and that there are constants $c$, $C > 0$ such that \begin{equation}
    \left\langle y - x, \frac{\lambda^*}{|\lambda^*|} \right\rangle
    =
    {\rm dist}(y - x, H)
    \ge
    c\, {\rm dist}(y, H_1 \cup \ldots \cup H_{d-m}) - C
\end{equation}
for every $y \in \Omega$.

Choose a set $J = \{ a_1, \ldots, a_d \} \subset I$ of $d$ points
such that $c' \le |x - x_a| \le c''$ and $\det(x - x_{a_1}, \ldots,
x - x_{a_d}) \ge c'$ for suitable constants $c'$ and $c''$. Then
\begin{equation}
    J^*
    \ge Z^{-1} \sum_{a \in J} e^{-\beta|x - x_a|^2 + \langle \lambda^*, x - x_a \rangle}
       (x - x_a) \otimes (x - x_a)
    \ge c\, e^{-C|\lambda^*|} \operatorname{Id}_{d-1}
\end{equation}
since all the projections $(x - x_a) \otimes (x - x_a)$ are nonnegative. It follows that
\begin{equation}
    (J^*)^{-1}
    \le
    C e^{C|\lambda^*|} \operatorname{Id}_{d-1}.
\end{equation}

Now if $x_a \in \Omega$ satisfies ${\rm dist}(x, \partial \Omega) \ge R$, then
\begin{equation}
    \langle x - x_a, \lambda^* \rangle \le - (c\, {\rm dist}(x_a,
    H_1 \cup \ldots \cup H_{d-m}) + C) |\lambda^*|
    \le
    (- c R + C) |\lambda^*|.
\end{equation}
So
\begin{equation}
    |w^*_a|
    \le
    Z^{-1} e^{- \beta|x - x_a|^2 + \langle \lambda^*, x - x_a \rangle}
    \le
    Z^{-1} e^{- \beta|x - x_a|^2 - (c R - C) |\lambda^*|}.
\end{equation}
It follows from \eqref{eq:: grad p} that
\begin{equation}
\begin{split}
    \left(1 + \left|x - x_a \right|^2 \right)^s |\nabla w^*_a(x)|
    &\le
    C \left(1 + \left|x - x_a \right|^2 \right)^s e^{- \beta|x - x_a|^2 - (c R + C) |\lambda^*|} e^{C|\lambda^*|} |x - x_a| \\
    &\le
    C e^{(2C - c R)|\lambda^*|}
    \le 1
    \end{split}
\end{equation}
for $R$ sufficiently large, which proves the Lemma for $h = 1$. The estimate for general $h$ now follows directly by rescaling as before.
\end{proof}

We are now in a position to prove our main density results up to the
boundary. Density in $W^{1, p}_0(\Omega)$ in fact only relies on our
previous interior estimates, see Section~\ref{sec:: convergence
analysis}, and Lemma~\ref{lemma:singular-boundary} and is true for
general, not necessarily polyhedral domains $\Omega$.

\begin{theorem}\label{theorem:W1p0-density}
Let $\Omega$ be a bounded polyhedron and $\{I_k, W_k, P_k\}$ be a
sequence of LME approximation schemes satisfying the assumptions of
Proposition~\ref{prop:: LME-shape functions have poly-decay}
uniformly for $h_k\to 0$. Then for any $u \in W^{1, p}_0(\Omega)$,
$1 \le p < \infty$, there exists a sequence $u_k \in X_k$ such that
$u_k \to u$.
\end{theorem}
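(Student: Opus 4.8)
The plan is to reduce to smooth, compactly supported data and then to split $\Omega$ into an interior bulk, where the interior estimate of Theorem~\ref{thm:: error bound of LME approximates} applies essentially verbatim, and a thin boundary layer, where the blow-up estimate of Lemma~\ref{lemma:singular-boundary} together with the weak Kronecker-delta property forces the approximant to be negligible. First I would use that $W^{1,p}_0(\Omega)$ is by definition the closure of $C^\infty_c(\Omega)$: it suffices to approximate an arbitrary $v\in C^\infty_c(\Omega)$ by a sequence $v_k\in X_k$ and then to pass to a diagonal sequence exactly as in Corollary~\ref{coro:: density in Sobolev space}. So fix $v\in C^\infty_c(\Omega)$, set $\sigma={\rm dist}(\mathrm{supp}\,v,\partial\Omega)>0$, and let $v_k=\sum_{a\in I_k}v(x_a)w_a$. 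For all $h_k$ small enough that $Rh_k<\sigma$ (with $R$ the constant of Lemma~\ref{lemma:singular-boundary}), every node $x_a$ with $v(x_a)\neq0$ lies in $\mathrm{supp}\,v$ and hence satisfies ${\rm dist}(x_a,\partial\Omega)\ge\sigma>Rh_k$; that is, only nodes that are \emph{interior} in the sense of Lemma~\ref{lemma:singular-boundary} contribute to $v_k$.

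Next I would decompose $\Omega=\Omega_{\varepsilon h_k}\cup B_k$ with $B_k=\{x\in\Omega\colon{\rm dist}(x,\partial\Omega)<\varepsilon h_k\}$. On $\Omega_{\varepsilon h_k}$ the interior estimate of Theorem~\ref{thm:: error bound of LME approximates} gives the pointwise bound $|D^\alpha v_k-D^\alpha v|\le C\|D^2v\|_\infty h_k^{2-|\alpha|}\le C\|D^2v\|_\infty h_k$ for $|\alpha|\le1$, so that $\|v_k-v\|_{W^{1,p}(\Omega_{\varepsilon h_k})}\le C|\Omega|^{1/p}\|D^2v\|_\infty h_k\to0$. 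On $B_k$ one has $v=0$ and $\nabla v=0$ once $\varepsilon h_k<\sigma$, so it remains to show $\|v_k\|_{W^{1,p}(B_k)}\to0$. For $x\in\overline{B_k}$ and a contributing node $x_a$ we have $|x-x_a|\ge\sigma-\varepsilon h_k\ge\sigma/2$, and Lemma~\ref{lemma:singular-boundary}, applied with $s$ chosen large (legitimate since the lemma holds for every $s>0$), yields $h_k|\nabla w_a(x)|\le(1+|(x-x_a)/h_k|^2)^{-s}\le(2h_k/\sigma)^{2s}$. Summing over the at most $O(h_k^{-d})$ contributing nodes (Definition~\ref{def:: h-density}) gives $\sup_{\overline{B_k}}|\nabla v_k|\le C\|v\|_\infty h_k^{2s-1-d}$, which tends to $0$ for $s>(d+1)/2$; since $|B_k|\le C\varepsilon h_k$, the gradient part $\|\nabla v_k\|_{L^p(B_k)}\to0$.

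For the function values I would avoid estimating $w_a(x)$ directly near the boundary and instead exploit the weak Kronecker-delta property encoded in \eqref{eq:: optimal probas}: at any $x_\partial\in\partial\Omega$ the shape functions involve only nodes on the minimal face of $\overline{\conv}(P)$ containing $x_\partial$, all of which are boundary nodes carrying $v(x_a)=0$, so $v_k(x_\partial)=0$. Hence for $x\in B_k$, choosing $x_\partial\in\partial\Omega$ with $|x-x_\partial|<\varepsilon h_k$ and integrating along the segment $[x,x_\partial]$ (which lies in $\overline{B_k}$ by convexity of $\Omega$) while using continuity of $v_k$ up to $\partial\Omega$, I obtain $|v_k(x)|=|v_k(x)-v_k(x_\partial)|\le\varepsilon h_k\sup_{\overline{B_k}}|\nabla v_k|\to0$, whence $\|v_k\|_{L^p(B_k)}\to0$ as well. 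Combining the bulk and layer estimates gives $v_k\to v$ in $W^{1,p}(\Omega)$, and the diagonal argument then completes the proof.

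I expect the boundary layer to be the only genuine obstacle. Two points require care: verifying that every node carrying nonzero data is interior in the precise sense demanded by Lemma~\ref{lemma:singular-boundary}, which is handled by the compact support of $v$ together with $Rh_k<\sigma$; and controlling $v_k$ itself on $B_k$, where a direct pointwise bound on $w_a$ is delicate because $\lambda^*$ blows up as $x\to\partial\Omega$. This is exactly why I route the function-value estimate through the vanishing trace and the already-established gradient bound rather than through the representation \eqref{eq:: optimal probas}. The interior contribution is entirely routine once Theorem~\ref{thm:: error bound of LME approximates} is in hand.
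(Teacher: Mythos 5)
Your proof is correct and shares the paper's overall architecture --- reduction to $v \in C^\infty_c(\Omega)$ with a diagonal argument, the interior estimate on $\Omega_{\eps h_k}$, and Lemma~\ref{lemma:singular-boundary} as the key boundary ingredient --- but the boundary layer is handled by a genuinely different device. The paper keeps the multipoint Taylor formula of Proposition~\ref{prop:Taylor} in force everywhere and, for $x$ within $\eps h$ of $\partial\Omega$, splits the \emph{node sum}: the far nodes are controlled by Lemma~\ref{lemma:singular-boundary} via the ring-summation argument of Theorem~\ref{thm:: uniform error bound} (yielding the quantitative rate $h^{2-|\alpha|}$ up to the boundary), while the remainders $R_2(x_a,x)$ of the near nodes vanish because $u$ vanishes on a neighborhood of the segment $[x,x_a]$. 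You instead observe that \emph{only} interior nodes (at distance $\ge \sigma > Rh_k$ from $\partial\Omega$, hence $\ge \sigma/2$ from any $x$ in the layer) carry data, bound $\sup_{B_k}|\nabla v_k|$ directly by a crude $O(h_k^{-d})$ node count against the $(1+|(x-x_a)/h_k|^2)^{-s}$ decay with $s$ large, and then recover the zeroth-order bound from the weak Kronecker-delta property ($v_k=0$ on $\partial\Omega$) by integrating the gradient along a segment to the boundary. What your route buys is a cleaner, more self-contained layer estimate that never needs a pointwise bound on $w_a$ itself near $\partial\Omega$ (where $\lambda^*$ blows up) and sidesteps the slight mismatch in the paper between the split $|x-x_a|\ge Rh$ and the hypothesis ${\rm dist}(x_a,\partial\Omega)\ge Rh$ of Lemma~\ref{lemma:singular-boundary}; what it gives up is the uniform rate $h^{2-|\alpha|}$ on the layer, which you do not need for density.

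The one step to make explicit is that your integration from $x_\partial \in \partial\Omega$ requires $v_k$ to be continuous up to $\partial\Omega = \partial\,\overline{\conv}(P)$; this is a property of the LME shape functions from \cite{arroyo_ortiz_06} that the paper invokes only implicitly through the weak Kronecker-delta statement. If you prefer not to rely on it, anchor instead at a point $y$ on the ray from $x_\partial$ through $x$ with ${\rm dist}(y,\partial\Omega)=\eps h_k$, where the interior estimate already gives $|v_k(y)| = |v_k(y)-v(y)| \le C h_k^2$, and integrate over the segment $[x,y] \subset \overline{B_k}\cup\{y\}$; the conclusion is unchanged.
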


\begin{proof}
It suffices to consider $u \in C^{\infty}_c(\Omega)$. Let $u_k = u_{I_k}$. By Proposition~\ref{prop:Taylor} we have
\begin{align}
  |D^{\alpha} u_k(x) - D^{\alpha} u(x)|
  \le
  \sum_{a \in I} |R_2(x_a, x)| D^{\alpha} w_a^*(x)|,
\end{align}
which for ${\rm dist}(x, \partial \Omega) \ge \eps h$ can be estimated by $C \| D^2 u \|_{\infty} h_k^{2 - |\alpha|} \to 0$ as $h_k \to 0$, see Theorem~\ref{thm:: uniform error bound}.

If ${\rm dist}(x, \partial \Omega) \le \eps h$, then with the help
of Lemma~\ref{lemma:singular-boundary} precisely the same arguments
as in the proof of Theorem~\ref{thm:: uniform error bound} show that
\begin{align}
  \sum_{\substack{a \in I \\ |x - x_a| \ge Rh}} |R_2(x_a, x)| D^{\alpha} w_a^*(x)|
  \le
  C \| D^2 u \|_{\infty} h_k^{2 - |\alpha|}
  \to 0
\end{align}
for a sufficiently large constant $R > 0$. But the remaining part of
the sum vanishes for small $h_k$ as $R_2(x_a, x) = 0$ if $|x - x_a|
< Rh$, since then $u$ vanishes on a neighborhood of the segment
$\{x_a + \theta(x - x_a)\colon \theta \in [0, 1]\} \subset \{x \in
\Omega \colon {\rm dist}(x, \partial\Omega) \le (R + \eps)h_k\}$.
\end{proof}

In order to formulate our main result on density up to the boundary we  denote by $\partial_* \Omega$ the union of the interiors of the $(d-1)$-faces of $\partial \Omega$. ($\partial_* \Omega$ is the reduced boundary in the language of geometric measure theory.) The part of $\Omega$ a distance $\eps h$ away from the singular boundary $\partial \Omega \setminus \partial_* \Omega$ is denoted  $\tilde{\Omega}_{\eps h} = \{x \in \Omega \colon {\rm dist}(x, \partial \Omega \setminus \partial_* \Omega) \ge \eps h\}$.

\begin{theorem}\label{theorem:W1p-density}
Let $\Omega$ be a bounded polyhedron and $\{I_k, W_k, P_k\}$ be a sequence of LME approximation schemes satisfying the assumptions of Proposition~\ref{prop:: LME-shape functions have poly-decay} uniformly for $h_k\to 0$. Let $\eps > 0$. Then for any $u \in W^{1, p}(\Omega)$, $1 \le p < \infty$, there exists a sequence $u_k \in X_k$ such that $\| u_k - u \|_{W^{1, p}(\tilde{\Omega}_{\eps h})} \to 0$.
\end{theorem}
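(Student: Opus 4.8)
The plan is to reduce to smooth functions by density, bound the pointwise interpolation error by the multipoint Taylor formula, and control the gradient term near the boundary using the blow-up estimate of Proposition~\ref{prop:blow-up-est}, the whole point being that the singular weight it produces becomes $L^p$-integrable once the exponent $\mu$ is chosen small relative to $p$.

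First I would reduce to a smooth integrand. A convex polytope satisfies the segment condition, so restrictions to $\Omega$ of functions in $C^\infty_c(\R^d)$ are dense in $W^{1,p}(\Omega)$. Choosing $v_i \in C^\infty_c(\R^d)$ with $v_i \to u$ in $W^{1,p}(\Omega)$ and using $\|\cdot\|_{W^{1,p}(\tilde\Omega_{\eps h})} \le \|\cdot\|_{W^{1,p}(\Omega)}$ (valid for every $h$ since $\tilde\Omega_{\eps h} \subset \Omega$), a diagonal argument reduces the claim to showing, for each fixed smooth $v$, that $v_k := v_{I_k}$ satisfies $\|v_k - v\|_{W^{1,p}(\tilde\Omega_{\eps h_k})} \to 0$. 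For such $v$ one has $\|D^2 v\|_\infty < \infty$, and Proposition~\ref{prop:Taylor} with $m=1$ gives
\begin{equation}
  |D^\alpha v_k(x) - D^\alpha v(x)| \le \sum_{a \in I_k} |R_2(x_a, x)|\, |D^\alpha w^*_a(x)|, \qquad |\alpha| \le 1,
\end{equation}
with $|R_2(x_a, x)| \le C\|D^2 v\|_\infty |x - x_a|^2$ from \eqref{eq:TaylorResidual}. The $\alpha = 0$ term is treated exactly as in Theorem~\ref{thm:: uniform error bound}: the uniform polynomial decay together with the ring count of Proposition~\ref{prop:: UB number node points on rings} yields $|v_k(x) - v(x)| \le C\|D^2 v\|_\infty h^2$ everywhere, so that $\|v_k - v\|_{L^p(\tilde\Omega_{\eps h})} \le C h^2 |\Omega|^{1/p} \to 0$.

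The gradient term is where the boundary enters. I would split $\tilde\Omega_{\eps h}$ into the part with ${\rm dist}(x, \partial\Omega) \ge \tfrac{\eps}{2} h$, on which $\bar B(x, \tfrac{\eps}{2} h) \subset \Omega$ and the interior bound of Theorem~\ref{thm:: error bound of LME approximates} gives $|\nabla v_k - \nabla v| \le C\|D^2 v\|_\infty h$, and the near-boundary part with ${\rm dist}(x, \partial\Omega) < \tfrac{\eps}{2} h$. The key geometric observation for the latter is that, since $x \in \tilde\Omega_{\eps h}$ stays a distance $\eps h$ from the singular boundary $\partial\Omega \setminus \partial_*\Omega$, the nearest boundary point $y$ of such an $x$ cannot lie in $\partial\Omega \setminus \partial_*\Omega$ (otherwise $|x-y| < \tfrac{\eps}{2}h$ would contradict $x\in\tilde\Omega_{\eps h}$), hence lies in the interior of a single $(d-1)$-face $A$, with ${\rm dist}(y, \partial A) \ge \eps h - \tfrac{\eps}{2}h = \tfrac{\eps}{2}h$. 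Thus the hypothesis of Proposition~\ref{prop:blow-up-est} holds with $\delta = \tfrac{\eps}{2}$. Fixing $s > \tfrac{d+2}{2}$ and $0 < \mu < \tfrac12$, and summing over the rings $U_t(x)$ as in Theorem~\ref{thm:: uniform error bound} (the series $\sum_t t^{d+1-2s}$ converging because $2s > d+2$), I obtain
\begin{equation}
  |\nabla v_k(x) - \nabla v(x)| \le C \|D^2 v\|_\infty\, h\, \left(1 + h^\mu\, {\rm dist}^{-\mu}(x, \partial\Omega)\right).
\end{equation}

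It remains to raise this to the $p$-th power and integrate, and this is the crux. Using $(a+b)^p \le 2^{p-1}(a^p + b^p)$,
\begin{equation}
  \|\nabla v_k - \nabla v\|^p_{L^p(\tilde\Omega_{\eps h})} \le C\|D^2 v\|_\infty^p\, h^p \left( |\Omega| + h^{\mu p} \int_\Omega {\rm dist}^{-\mu p}(x, \partial\Omega)\, dx \right).
\end{equation}
The main obstacle is the integrability of ${\rm dist}^{-\mu p}$ near the flat faces of $\partial\Omega$. Since $p < \infty$, I can choose $\mu$ with $0 < \mu < \min\{\tfrac12, \tfrac1p\}$, so that $\mu p < 1$; for a bounded Lipschitz domain, and in particular a polytope, ${\rm dist}^{-\sigma}(\cdot, \partial\Omega)$ is integrable for every $\sigma < 1$, so the integral is a finite constant. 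The right-hand side is then bounded by $C\|D^2 v\|_\infty^p\,(h^p + h^{p(1+\mu)}) \to 0$ as $h_k \to 0$. Combining with the $L^p$ estimate of the previous step gives $\|v_k - v\|_{W^{1,p}(\tilde\Omega_{\eps h_k})} \to 0$, and the diagonal argument of the first step then produces the desired sequence $u_k$ converging to $u$.
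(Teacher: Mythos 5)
Your proposal is correct and follows essentially the same route as the paper's proof: reduce to smooth functions, apply the multipoint Taylor formula together with the ring-counting argument of Theorem~\ref{thm:: uniform error bound}, invoke Proposition~\ref{prop:blow-up-est} to control $\nabla w^*_a$ near the regular boundary, and integrate the resulting weight ${\rm dist}^{-\mu p}(\cdot,\partial\Omega)$ after choosing $\mu$ so that $\mu p < 1$. Your additional details---the diagonal reduction, the verification that points of $\tilde\Omega_{\eps h}$ near $\partial\Omega$ see only a single flat face (so that the hypothesis of Proposition~\ref{prop:blow-up-est} holds with $\delta=\eps/2$), and the explicit choice $\mu<\min\{\tfrac12,\tfrac1p\}$---merely make explicit what the paper leaves implicit.
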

Therefore, density holds away from the singular boundary.

\begin{proof}
Let $u \in C^2(\Omega)$. As in the proof of Theorem~\ref{theorem:W1p0-density} we find by Proposition \ref{prop:Taylor} and the arguments in the proof of Theorem \ref{thm:: uniform error bound} that for all $x \in \tilde{\Omega}_{\eps h}$
\begin{equation}
    |D^{\alpha} u_I(x) - D^{\alpha} u(x)|
    \le
    C \| D^2 u\|_{\infty} h^{2 - |\alpha|}\left( 1 + h^{\mu} {\rm dist}^{-\mu}\left(x, \partial \Omega \right)\right),
\end{equation}
$0 < \mu < \frac{1}{2}$, where in addition we have applied Proposition~\ref{prop:blow-up-est} in order to estimate $\nabla w^*_a$ near the regular boundary. Consequently,
\begin{equation}
\begin{split}
    \int_{\tilde{\Omega}_{\eps h}} |D^{\alpha} u_I(x) - D^{\alpha} u(x)|^p \, dx
    &\le
    C h^{(2 - |\alpha|) p}
      \int_{\tilde{\Omega}_{\eps h}} 1 + h^{p \mu} {\rm dist}^{-p \mu}\left(x, \partial \Omega \right) \, dx \\
    &\le
    C h^{(2 - |\alpha|)p} \left( 1 + h^{p \mu} \int_0^1 t^{-p \mu} \, dt \right) \\
    &=
    C h^{(2 - |\alpha|)p}
\end{split}
\end{equation}
for $\mu$ sufficiently close to $0$.
\end{proof}

Note that since the $(d-2)$-dimensional Hausdorff measure of
$\partial \Omega \setminus \partial_* \Omega$ is finite, this set
has zero $2$-capacity for $d \ge 3$. Theorem
\ref{theorem:W1p-density} thus shows that $u$ can be approximated by
$u_k \in X_k$ in $H^1$ up to sets of arbitrarily small $2$-capacity.
With the help of a capacity argument we obtain from
Theorem~\ref{theorem:W1p-density}.

\begin{corollary}\label{cor:H1-density}
Let $\Omega$ be a bounded polyhedron and $\{I_k, W_k, P_k\}$ be a
sequence of LME approximation schemes, $\eps > 0$. Suppose $d > 2$.
Then for any $u \in H^{1}(\Omega)$ there exists a sequence $\chi_k
\in H^1$ with $\chi_k \to 1$ in $H^1$ and a sequence $u_k \in X_k$
such that $\chi_k u_k \to u$ in $H^1$.
\end{corollary}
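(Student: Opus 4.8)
The plan is to combine the density estimate away from the singular boundary (Theorem~\ref{theorem:W1p-density}) with a capacity cutoff that excises a shrinking neighborhood of the singular boundary $K := \partial\Omega\setminus\partial_*\Omega$, exploiting that $\chi_k u_k$ is not required to lie in $X_k$. First I would reduce to $u \in C^\infty(\bar\Omega)$: since $\Omega$ is a Lipschitz polyhedron these functions are dense in $H^1(\Omega)$, and the cutoffs $\chi_k$ constructed below will be \emph{universal} (independent of $u$), so the general case follows by a diagonal argument at the very end. For smooth $u$ I would crucially invoke the convexity of the scheme: since $w_a^* \ge 0$ and $\sum_{a} w_a^* = 1$, the approximant $u_k = \sum_{a} u(x_a) w_a^*$ is a pointwise convex combination of nodal values, whence $\|u_k\|_{L^\infty(\Omega)} \le \|u\|_{L^\infty(\bar\Omega)}$ uniformly in $k$.

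Second, I would construct the cutoffs. Because $d > 2$ and $\mathcal{H}^{d-2}(K) < \infty$, the set $K$ has zero $2$-capacity, so a standard truncation yields $\psi_j \in C_c^\infty(\R^d)$ with $0 \le \psi_j \le 1$, $\psi_j \equiv 1$ on an open neighborhood $V_j$ of $K$, and $\|\psi_j\|_{H^1}\to 0$. Writing $N_k = \{x : \mathrm{dist}(x,K) < \eps h_k\}$, each $V_j$ contains a $\delta_j$-neighborhood of the compact set $K$; since $h_k\to 0$ one has $N_k \subset V_j$ for all $k$ large, so one can choose $j(k)\to\infty$ with $N_k\subset V_{j(k)}$. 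Setting $\chi_k := 1 - \psi_{j(k)}$ then gives $0 \le \chi_k \le 1$, $\chi_k \equiv 0$ on $N_k$, $\{\chi_k \ne 0\}\cap\Omega \subset \tilde\Omega_{\eps h_k}$, and $\|\chi_k - 1\|_{H^1} = \|\psi_{j(k)}\|_{H^1}\to 0$, i.e.\ $\chi_k\to 1$ in $H^1$.

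Third, letting $u_k$ be the approximant of Theorem~\ref{theorem:W1p-density} (for the given $\eps$ and $p = 2$), so that $\|u_k - u\|_{H^1(\tilde\Omega_{\eps h_k})}\to 0$, I would split $\chi_k u_k - u = \chi_k(u_k - u) + (\chi_k - 1)u$. The term $(\chi_k - 1)u = -\psi_{j(k)}u$ tends to $0$ in $H^1$ since $u$ and $\nabla u$ are bounded and $\psi_{j(k)}\to 0$ in $H^1$. For $\chi_k(u_k - u)$ I would apply the product rule: the contributions $\chi_k(u_k-u)$ and $\chi_k\nabla(u_k - u)$ are supported in $\tilde\Omega_{\eps h_k}$ and vanish in $L^2$ by Theorem~\ref{theorem:W1p-density}, while the delicate term $(u_k - u)\nabla\chi_k$ is controlled by $\|u_k - u\|_{L^\infty}\,\|\nabla\chi_k\|_{L^2} \le 2\|u\|_\infty\,\|\nabla\psi_{j(k)}\|_{L^2}\to 0$, using precisely the uniform $L^\infty$ bound from convexity. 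This gives $\chi_k u_k \to u$ in $H^1$ for smooth $u$, and the diagonal argument upgrades it to arbitrary $u \in H^1(\Omega)$.

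I expect the main obstacle to be the term $(u_k - u)\nabla\chi_k$ produced by differentiating the product, as it couples the non-uniform boundary behaviour of $u_k$ to the cutoff gradient. The resolution is twofold: matching the capacity scale to $h_k$ so that $\nabla\chi_k$ is supported in $\tilde\Omega_{\eps h_k}$, and discarding any attempt at a pointwise bound on $\nabla(u_k-u)$ there in favour of the crude but uniform convex-combination bound $\|u_k\|_\infty\le\|u\|_\infty$, which is available only because the LME scheme is a convex approximation scheme.
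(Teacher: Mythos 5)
Your proposal is correct and follows essentially the same route as the paper: a capacity cutoff $\chi_k = 1-\psi$ vanishing near the singular boundary $\partial\Omega\setminus\partial_*\Omega$, combined with Theorem~\ref{theorem:W1p-density} and the decomposition $\chi_k u_k - u = \chi_k(u_k-u) + (\chi_k-1)u$, followed by a diagonal argument. The only (minor) difference is the cross term $(u_k-u)\nabla\chi_k$: the paper keeps $\|\nabla\psi_\delta\|_{L^2}$ merely small and sends $\|u_k-u\|_{L^\infty(\tilde\Omega_{h_k})}\to 0$ via Sobolev embedding from the $W^{1,p}$ estimate with large $p$, whereas you use the uniform maximum principle $\|u_k\|_\infty\le\|u\|_\infty$ from convexity together with $\|\nabla\psi_{j(k)}\|_{L^2}\to 0$ — an equally valid, arguably more robust, handling of the same term.
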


\begin{proof}
Since the $(d-2)$ dimensional Hausdorff measure of the singular part $\partial \Omega \setminus \partial_* \Omega$ of the boundary is finite, this set has zero $2$-capacity:
\begin{equation}
    {\rm Cap}_2(\partial \Omega \setminus \partial_* \Omega)
    =
    0.
\end{equation}
In particular, for every neighborhood $V$ of $\partial \Omega
\setminus \partial_* \Omega$ and $\delta > 0$ there exists a
function $\psi_{\delta} \in H^1(\R^d)$ with compact support in $V$
such that $\psi_{\delta} > 1$ in a smaller neighborhood of $\partial
\Omega \setminus \partial_* \Omega$ and
\begin{equation}
    \int_{\R^d} |\nabla \psi_{\delta}|^2 \, dx
    <
    \delta.
\end{equation}
(This follows, e.~g., from Theorem~3 and its proof
in~\cite[pp.~155--157]{evansgariepy}.) By replacing, if necessary,
$\psi_{\delta}$ with a mollification of $\max \{\min\{\psi_{\delta},
1\}, -1\}$ we may assume that $\psi_{\delta}$ is smooth,
$|\psi_{\delta}| \le 1$ and in particular $\psi_{\delta} \equiv 1$
near $\partial \Omega \setminus \partial_* \Omega$.

Now suppose $u \in C^1(\bar{\Omega})$. By
Theorem~\ref{theorem:W1p-density} we find $u_k \in X_k$ with $\| u_k
- u \|_{W^{1, p}(\tilde{\Omega}_{h_k})} \to 0$ for all $p < \infty$.
Since $1 - \psi_{\delta}$ vanishes in a neighborhood of $\partial
\Omega \setminus \partial_* \Omega$, it follows that
\begin{align}
    \|(1 - \psi_{\delta}) (u_k - u) \|_{H^1(\Omega)}^2
    &\le
    \|1 - \psi_{\delta} \|_{L^{\infty}(\Omega)}^2 \| u_k - u \|_{H^1(\tilde{\Omega}_{h_k})}^2
      + \| \nabla \psi_{\delta} \|_{L^2(\Omega)} \| u_k - u \|_{L^{\infty}(\tilde{\Omega}_{h_k})}^2 \nonumber\\
    &\to 0
\end{align}
by Sobolev embedding. As
\begin{equation}
\begin{split}
  \| (1 - \psi_{\delta})u - u \|_{H^1}^2
  &=
  \|\psi_{\delta} u\|_{H^1}^2 \\
  &\le
  \int_V |u|^2 \, dx + \int_V |\nabla u|^2 \, dx + \|u\|_{\infty}^2 \int_V |\nabla \psi_{\delta}|^2 \, dx \\
  &\le
  C(|V| + \delta)
\end{split}
\end{equation}
and $V$ and $\delta$ can be chosen arbitrarily small, by choosing
diagonal sequences we see that every $u \in C^1(\bar{\Omega})$ and
hence in fact every $u \in H^1(\Omega)$ can be approximated by
sequences $(1 - \psi_{\delta_k}) u_k$, $u_k \in X_k$.
\end{proof}
\end{section}

\begin{section}{Concluding remarks}
\label{sec:: conclusions}

The preceding analysis shows that, whereas the density of the LME approximating scheme in the interior of the domain follows directly from the general results for meshfree approximation schemes, the density of the scheme up to the boundary is a matter of considerable delicacy. This situation strongly suggests relaxing the positivity constraint and allowing for signed basis functions. This relaxation is also required for the formulation of higher-order approximation schemes, as noted by \cite{arroyo_ortiz_06, cyron_09}. Indeed, in the finite-element limit shape functions of quadratic order and higher are signed functions in general. As an additional bonus, signed shape functions enable the consideration of general---not necessarily convex---domains. These extensions are pursued in a follow-up publication \cite{BompadreCyronPerottiOrtiz2011}, where LME-type approximation schemes of arbitrary order and smoothness are derived and their convergence properties are analyzed using the general analysis framework developed in this paper.

\end{section}

\bibliographystyle{plain}
\bibliography{main}

\begin{thebibliography}{10}

\bibitem{adams_03}
R.~A. Adams and J.~J.~F. Fournier.
\newblock {\em Sobolev Spaces}.
\newblock Academic Press, Boston, MA, 2nd. edition, 2003.

\bibitem{arroyo_ortiz_06}
M.~Arroyo and M.~Ortiz.
\newblock Local {\it maximum-entropy} approximation schemes: a seamless bridge
  between finite elements and meshfree methods.
\newblock {\em International Journal for Numerical Methods in Engineering},
  65(13):2167--2202, 2006.

\bibitem{arroyo_ortiz_06_b}
M.~Arroyo and M.~Ortiz.
\newblock {\em Meshfree {M}ethods for {P}artial {D}ifferential {E}quations
  {III}}, chapter Local maximum entropy approximation schemes.
\newblock Lecture Notes in Computational Science and Engineering. Springer,
  Berlin, 2006.

\bibitem{BompadreCyronPerottiOrtiz2011}
A.~Bompadre, C.~J. Cyron, L.~E. Perotti, and M.~Ortiz.
\newblock Convergent meshfree approximation schemes of arbitrary order and
  smoothness.
\newblock In preparation, 2011.

\bibitem{ciarlet}
P.~G. Ciarlet.
\newblock {\em {T}he {F}inite {E}lement {M}ethod for {E}lliptic {P}roblems}.
\newblock SIAM, 2002.

\bibitem{ciarlet_72}
P.~G. Ciarlet and P.~A. Raviart.
\newblock {General Lagrange and Hermite Interpolation in {$\mathbb{R}^n$} with
  Applications to Finite Element Methods}.
\newblock {\em Archive for Rational Mechanics and Analysis}, 46(3):177--199,
  1972.

\bibitem{ContiHauretOrtiz2007}
S.~Conti, P.~Hauret, and M.~Ortiz.
\newblock {Concurrent Multiscale Computing of Deformation Microstructure by
  Relaxation and Local Enrichment with Application to Single-Crystal
  Plasticity}.
\newblock {\em SIAM Multiscale Modeling and Simulation}, 6(1):135--157, 2007.

\bibitem{cottrell_06}
J.~A. Cottrell, A.~Reali, Y.~Bazilevs, and T.~J.~R. Hughes.
\newblock Isogeometric analysis of structural vibrations.
\newblock {\em Computer Methods in Applied Mechanics and Engineering},
  195:5257--5296, 2006.

\bibitem{cyron_09}
C.~J. Cyron, M.~Arroyo, and M.~Ortiz.
\newblock Smooth, second order, non-negative meshfree approximants selected by
  maximum entropy.
\newblock {\em International Journal for Numerical Methods in Engineering},
  79(13):1605--1632, 2009.

\bibitem{dalMaso1993}
G.~dal Maso.
\newblock {\em {Introduction to $\Gamma$-Convergence}}.
\newblock Birkh{\"a}user Boston, Boston, MA, 1993.

\bibitem{evansgariepy}
L.~C. Evans and R.~F. Gariepy.
\newblock {\em Measure Theory and Fine Properties of Functions}.
\newblock CRC Press, Boca Raton, FL, 1992.

\bibitem{gonzalez_10}
D.~Gonz\'{a}lez, E.~Cueto, and M.~Doblar\'{e}.
\newblock A higher order method based on local maximum entropy approximation.
\newblock {\em International Journal for Numerical Methods in Engineering},
  83(6):741--764, 2010.

\bibitem{meshfree_review}
A.~Huerta, T.~Belytschko, S.~Fern{\'{a}}ndez-{M}{\'{e}}ndez, and T.~Rabczuk.
\newblock Meshfree methods.
\newblock {\em Encyclopedia of Computational Mechanics}, 1(10):279--309, 2004.

\bibitem{jaynes_57_a}
E.~T. Jaynes.
\newblock {Information Theory and Statistical Mechanics}.
\newblock {\em Physical Review}, 106(4):620--630, 1957.

\bibitem{khinchin_57_a}
A.~I. Khinchin.
\newblock {\em {M}athematical {F}oundations of {I}nformation {T}heory}.
\newblock Dover, New York, NY, 1957.

\bibitem{LiHabbalOrtiz2010}
B.~Li, F.~Habbal, and M.~Ortiz.
\newblock Optimal transportation meshfree approximation schemes for fluid and
  plastic flows.
\newblock {\em International Journal for Numerical Methods in Engineering},
  83(12):1541--1579, 2010.

\bibitem{meshfree_convergence}
W.~K. Liu, S.~Li, and T.~Belytschko.
\newblock Moving least square reproducing kernel methods {P}art {I}:
  Methodology and convergence.
\newblock {\em Computer Methods in Applied Mechanics and Engineering},
  143(1-2):113--154, 1997.

\bibitem{quak_09}
W.~Quak, D.~Gonz\'{a}lez, E.~Cueto, and A.~H. van~den Boogaard.
\newblock On the use of local max-ent shape functions for the simulation of
  forming processes.
\newblock In {\em X International Conference on Computational Plasticity,
  COMPLAS X}, September 2009.

\bibitem{rajan_94_a}
V.~T. Rajan.
\newblock Optimality of the {D}elaunay triangulation in {$\mathbb{R}^d$}.
\newblock {\em Discrete and Computational Geometry}, 12(2):189--202, 1994.

\bibitem{rockafellar_70_a}
R.~T. Rockafellar.
\newblock {\em Convex {A}nalysis}.
\newblock Princeton University Press, Princeton, NJ, 1970.

\bibitem{rudin_91}
W.~Rudin.
\newblock {\em {Functional Analysis}}.
\newblock McGraw-Hill, Hightstown, NJ, 2nd. edition, 1991.

\bibitem{shannon_48_a}
C.~E. Shannon.
\newblock A mathematical theory of communication.
\newblock {\em The Bell System Technical Journal}, 27(3):379--423, 1948.

\bibitem{strang_73}
G.~Strang and G.~Fix.
\newblock {\em {An Analysis of the Finite Element Method}}.
\newblock Prentice-Hall, Englewood Cliffs, NJ, 1973.

\bibitem{sukumar_04}
N.~Sukumar.
\newblock Construction of polynomial interpolants: a maximum entropy approach.
\newblock {\em International Journal for Numerical Methods in Engineering},
  61(12):2159--2181, 2004.

\bibitem{sukumar_05}
N.~Sukumar.
\newblock Maximum entropy approximation.
\newblock In {\em 25th International Workshop on Bayesian Inference and Maximum
  Entropy Methods in Science and Engineering, American Institute of Physics
  (AIP)}, volume 803, pages 337--344, 2005.

\bibitem{sukumar_07_b}
N.~Sukumar and R.~J.-B. Wets.
\newblock Deriving the continuity of maximum-entropy basis functions via
  variational analysis.
\newblock {\em SIAM Journal on Optimization}, 18(3):914--925, 2007.

\bibitem{sukumar_07}
N.~Sukumar and R.~Wright.
\newblock Overview and construction of meshfree basis functions: from moving
  least squares to entropy approximants.
\newblock {\em International Journal for Numerical Methods in Engineering},
  70(2):181--205, 2007.

\end{thebibliography}
\end{document}